
\documentclass[a4paper, 11pt]{amsart}

\usepackage{amsmath,amssymb,amsthm,amscd,enumerate,setspace}
\usepackage[all]{xy}
\usepackage[dvips]{graphicx}

\usepackage{xcolor}
\usepackage{enumitem}
\usepackage{mdframed}

\newmdenv[backgroundcolor=yellow]{shaded}

\usepackage{hyperref}


\baselineskip=55pt
\textwidth=6.2in
\hoffset=-.6in
\voffset=-.5in
\textheight= 23 cm

\newtheorem{Theorem}{Theorem}[section]
\newtheorem{Lemma}[Theorem]{Lemma}
\newtheorem{Corollary}[Theorem]{Corollary}
\newtheorem{Proposition}[Theorem]{Proposition}

\newtheorem{Remark}[Theorem]{Remark}
\newtheorem{Example}[Theorem]{Example}
\newtheorem{Definition}[Theorem]{Definition}
\newtheorem{Question}[Theorem]{Question}

\newtheorem{claim}[Theorem]{Claim}

\def\ann{\mbox{\rm ann}}
\def\Ass{\mbox{\rm Ass}}

\def\C{\mathcal{C}}
\def\cdeg{\mbox{\rm cdeg}}

\def\ds{\displaystyle}

\def\Ext{\mbox{\rm Ext}}

\def\gr{\mbox{\rm gr}}

\def\height{\mbox{\rm height }}
\def\Hom{\mbox{\rm Hom}}

\def\ker{\mbox{\rm ker}}

\def\l{{\lambda}}
\def\lar{\longrightarrow}

\def\Min{\mbox{\rm Min}}

\def\rar{\rightarrow}
\def\red{\mbox{\rm red}}

\def\Spec{\mbox{\rm Spec}}

\def\e{\mathrm{e}}
\def\m{{\mathfrak m}}
\def\n{\mathfrak{n}}
\def\p{{\mathfrak p}}
\def\q{\mathfrak{q}}

\def\AA{{\mathbf A}}

\def\BB{{\mathbf B}}

\def\RR{{\mathbf R}}

\def\xx{{\mathbf x}}
\def\TT{{\mathbf T}}

\def\g2{{\mathbf g}}

\def\xx{{\mathbf x}}

\def\yy{{\mathbf y}}

\def\H{{\mathrm H}}

\newcommand{\rmK}{\mathrm{K}}
\newcommand{\rmQ}{\mathrm{Q}}

\begin{document}

\title{Invariants of Cohen-Macaulay rings associated to their
canonical ideals}

\author{L. Ghezzi} \address{Department of Mathematics, New York City
College of Technology-Cuny, 300 Jay Street, Brooklyn, NY 11201, U.S.A.} \email{lghezzi@citytech.cuny.edu}
\author{S. Goto}
\address{Department of Mathematics, School of Science and Technology,
Meiji University, 1-1-1 Higashi-mita, Tama-ku, Kawasaki 214-8571,
Japan} \email{goto@math.meiji.ac.jp}
\author{J. Hong}
\address{Department of Mathematics, Southern Connecticut State
University, 501 Crescent Street, New Haven, CT 06515-1533, U.S.A.}
\email{hongj2@southernct.edu}
\author{W. V. Vasconcelos}
\address{Department of Mathematics, Rutgers University, 110
Frelinghuysen Rd, Piscataway, NJ 08854-8019, U.S.A.}
\email{wvasconce@gmail.com}

\thanks{The first author was partially supported by a grant from the City University of New York PSC-CUNY Research Award Program-46. }

\begin{abstract}
The purpose  of this paper  is to introduce new invariants of Cohen-Macaulay local rings.
Our focus is the class of Cohen-Macaulay local rings that admit a canonical ideal. Attached
to each such ring $\RR$ with a canonical ideal $\C$, there are
  integers--the type of $\RR$, the reduction number of $\C$--that provide valuable metrics
  to express the deviation of $\RR$ from being a Gorenstein ring.
We  enlarge  this list with other integers--the roots of $\RR$ and several canonical degrees. The
latter are multiplicity based functions of the Rees algebra of $\C$.
\end{abstract}


\maketitle

\noindent {\small {\bf  Key Words and Phrases:}
Canonical degree, Cohen-Macaulay type, analytic spread, roots, reduction number.}


\section{Introduction}



\noindent
Let $\RR$ be a Cohen-Macaulay  ring of dimension $d \geq 1$.
 If $\RR$ admits a canonical module $\C$ and has a Gorenstein total ring of fractions, we may assume that   $\C$ is
 an  ideal  of $\RR$. In this case,
   we  introduce  new numerical
   invariants for $\RR$ that refine and extend
  for local rings  the use of its Cohen-Macaulay {\em type}, the minimal number of generators
   of $\C$, $r(\RR) = \nu(\C)$.
   Among numerical invariants of
   the isomorphism class of $\C$ are the {\em analytic spread} $\ell(\C)$ of $\C$ and attached  reduction numbers. We also introduce  the {\em rootset} of $\RR$, which may be a novel invariant.
   Certain constructions on $\C$, such as the Rees algebra $\RR[\C\TT]$ leads to an invariant
 of $\RR$,
but the associated graded ring $\gr_{\C}(\RR)$ does not. It carries however properties of a semi-invariant
which we will make use of to  build true invariants. Combinations of semi-invariants are then used
to build invariants under the general designation of {\em canonical degrees}.
The
 main effort is  setting the foundations of the new invariants and examining their relationships.
    We shall also experiment in extending the construction to more general rings.
  When we do so, to facilitate the discussion we assume that $\RR$ is a homomorphic image of a Gorenstein ring.
 In a sequel we make applications to Rees algebras, monomial rings, Stanley-Reisner rings.

\medskip


In Section 2 we introduce our basic canonical degree and derive some of its most direct properties. It requires knowledge of the Hilbert
coefficients  $\e_0(\cdot)$
of $\m$-primary ideals:
\medskip

\noindent
{\bf Theorem \ref{gencdeg1}.}
Let $(\RR, \m)$ be a Cohen-Macaulay local ring  of dimension $d \geq 1$  that has a canonical ideal $\C$.
  Then
  \[ \cdeg(\RR)= \sum_{\tiny \height \p=1} \cdeg(\RR_{\p}) \deg(\RR/\p) = \sum_{\tiny \height \p=1} [\e_{0}(\C_{\p}) - \l((\RR/\C)_{\p})] \deg(\RR/\p)
  \] is a well-defined finite sum independent of the chosen canonical ideal $\C$. In particular, if $\C$ is equimultiple with a minimal reduction $(a)$, then
  \[  \cdeg(\RR) = \deg(\C/(a)) = \e_0(\m, \C/(a)).\]

Two of its consequences when $\C$ is equimultiple are: (i) $\cdeg(\RR) \geq r(\RR) - 1$; (ii) $\cdeg(\RR) = 0$ if and only if $\RR$ is Gorenstein.
\medskip

In Section 3,
the minimal value for $\cdeg(\cdot)$ is assumed  on a new class of Cohen-Macaulay rings, called {\em almost Gorenstein rings}, introduced
in \cite{BF97}, and developed in \cite{GMP11} and \cite{GTT15}:
\medskip

\noindent
{\bf Proposition \ref{almostg}.}
Let $(\RR, \m)$ be a Cohen-Macaulay local ring  of dimension $d \geq 1$  that has a canonical ideal $\C$. If $\C$ is equimultiple and
$\cdeg(\RR) = r(\RR) - 1$, then there is an exact sequence
of $\RR$-modules
${\ds 0\rightarrow\RR\rightarrow \C \rightarrow X\rightarrow  0}$
such that $\nu(X)=\e_0(X)$.

\medskip

The next three sections
 carry out several general calculations seeking relations between $\cdeg(\RR)$ and other invariants of $\RR$ in the case of low
 Cohen-Macaulay type. For instance, it
 treats the notion of the {\em rootset} of $\RR$,  made up  of
 the positive integers $n$ such that $L^n \simeq \C$ for some
fractional ideal $L$. In dimension $1$ this is a finite set of cardinality $\leq r(\RR) - 1$.  We also describe the rootset for fairly general monomial rings.
 Section 6 deals with general properties of $\cdeg(\cdot)$ under a change of rings: polynomial rings,  completion and hyperplane section.  It has an extended  but quick treatment of the changes of $\cdeg(\AA)$ when $\AA$ is the ring obtained by augmenting $\RR$ by a module.

\medskip

In Section 7 we discuss extensions of the canonical degree. A clear shortcoming of the definition of $\cdeg(\RR)$, e.g. that be independent of the canonical ideal and that its vanishing is equivalent to
the Gorenstein property, is that it does not seem to work for rings of dimension $\geq 2$. A natural choice would be for
  $G = \gr_{\C}(\RR) $ to define
 the {\em canonical degree}$^*$ of $\RR$  as  the integer
$ \cdeg^{*}_{\C}(\RR)
 =\deg(\gr_{\C}(\RR)) - \deg(\RR/\C)
$.
Finally, an appropriate
hyperplane section readily creates a degree that has the Gorenstein property but we were unable to prove the independence
property.





\section{Canonical degree}

%
%

\noindent
  For basic references on canonical modules we  will use
\cite{BH},   \cite{BS} and \cite{HK2}, while for the existence and properties of canonical ideals we
   use \cite{Aoyama}.
Let $(\RR,\m)$ be a Cohen-Macaulay local ring. Suppose that $\RR$  has a canonical ideal $\C$.  In this setting we
 introduce a numerical degree for $\RR$ and study its properties. The starting point of our discussion is the
following elementary observation. We denote the length by $\lambda$.

 \begin{Proposition}\label{cdeg}
 Let $(\RR,\m)$ be a $1$-dimensional Cohen-Macaulay local ring with  a canonical ideal $\C$.
 Then the integer $\cdeg(\RR)=\e_0(\C) -\l(\RR/\C)$ is independent of the canonical ideal $\C$.
 \end{Proposition}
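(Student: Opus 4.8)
The plan is to use the uniqueness of the canonical module up to isomorphism together with the fact that, for a one-dimensional Cohen-Macaulay local ring whose total ring of fractions is Gorenstein, any two canonical ideals differ by multiplication by a unit of the total ring of fractions. Concretely, if $\C$ and $\C'$ are two canonical ideals of $\RR$, then $\C \simeq \C'$ as $\RR$-modules, and since both are identified with ideals inside $\RR$ (hence inside the total ring of fractions $\rmQ(\RR)$) via honest inclusions, the isomorphism is realized by multiplication by some non-zerodivisor $\alpha \in \rmQ(\RR)$: that is, $\C' = \alpha \C$. After clearing denominators one may even assume $\alpha \in \RR$ is a non-zerodivisor, at the cost of replacing $\C'$ by an isomorphic copy $\alpha\C \subseteq \RR$; so it suffices to compare $\C$ and $a\C$ for a non-zerodivisor $a \in \RR$, and more generally to show $\cdeg$ is unchanged when $\C$ is replaced by any isomorphic ideal contained in $\RR$.

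The key computation is then the following. Since $a$ is a non-zerodivisor, multiplication by $a$ gives an isomorphism $\RR/\C \xrightarrow{\ \sim\ } a\RR/a\C$, so $\l(\RR/\C) = \l(a\RR/a\C)$. On the other hand $a\C \subseteq a\RR \subseteq \RR$, and from the short exact sequences relating $\RR/a\C$, $\RR/a\RR$, and $a\RR/a\C$ we get
\[
\l(\RR/a\C) = \l(\RR/a\RR) + \l(a\RR/a\C) = \e_0((a)) + \l(\RR/\C),
\]
using that $\RR$ is one-dimensional Cohen-Macaulay so $(a)$ is a parameter ideal with $\l(\RR/a\RR) = \e_0((a))$. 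Meanwhile the multiplicity $\e_0(-)$ of an $\m$-primary ideal behaves additively under this kind of comparison: $\e_0(a\C)$, computed as the leading Hilbert coefficient, satisfies $\e_0(a\C) = \e_0((a)) + \e_0(\C)$ — this is the standard fact that for a non-zerodivisor $a$ and $\m$-primary $\C$ one has $\lambda(\RR/(a\C)^{n}) = \lambda(\RR/(a)^{n}) + \lambda((a)^{n}/(a\C)^{n})$ and $(a)^n/(a\C)^n \cong \RR/\C^n$, so the normalized leading terms add. Subtracting, $\e_0(a\C) - \l(\RR/a\C) = \e_0(\C) - \l(\RR/\C)$, which is exactly $\cdeg(\RR)$ computed from $a\C$ equal to $\cdeg(\RR)$ computed from $\C$.

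Finally I would record the reduction step carefully: for two arbitrary canonical ideals $\C, \C'$, pick non-zerodivisors $a, b \in \RR$ with $a\C' = b\C$ inside $\RR$ (possible because $\C' = \alpha\C$ in $\rmQ(\RR)$ with $\alpha = b/a$), apply the invariance under multiplication by a non-zerodivisor twice, once to pass from $\C'$ to $a\C'$ and once to pass from $\C$ to $b\C$, and conclude $\cdeg$ computed from $\C'$ equals that computed from $a\C' = b\C$ equals that computed from $\C$. The main obstacle is the bookkeeping in the step $\e_0(a\C) = \e_0(\C) + \e_0((a))$: one must be a little careful that the relevant quotients $(a)^n/(a\C)^n$ really are isomorphic to $\RR/\C^{n}$ (which uses that $a$ is a non-zerodivisor), and that the Hilbert polynomials line up so that only the degree-$d$ coefficient (here $d=1$) is being extracted; everything else is routine length arithmetic.
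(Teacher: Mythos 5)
Your argument is correct, and it takes a genuinely different route from the paper's. The paper first replaces $\RR$ by $\RR(x)=\RR[x]_{\m\RR[x]}$ to secure an infinite residue field, picks a minimal reduction $(a)$ of $\C$, observes that $\mathcal{D}=q\C$ forces $(qa)$ to be a minimal reduction of $\mathcal{D}$ with $\C/(a)\cong\mathcal{D}/(qa)$, and then concludes from $\e_0(\C)=\l(\RR/(a))$ by colength arithmetic. You avoid reductions altogether: you reduce to comparing $\C$ with $a\C$ for a non-zerodivisor $a\in\RR$, compute $\l(\RR/a\C)=\e_0((a))+\l(\RR/\C)$ from the filtration $a\C\subseteq a\RR\subseteq\RR$, and compute $\e_0(a\C)=\e_0((a))+\e_0(\C)$ from the additivity of the Hilbert polynomials $\l(\RR/(a\C)^n)=\l(\RR/(a^n))+\l(\RR/\C^n)$, so that the $\e_0((a))$ contributions cancel in the difference $\e_0-\l$. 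What this buys you: no need to pass to an infinite residue field or invoke the existence of a principal minimal reduction, and the computation is a self-contained Hilbert-function argument. What the paper's approach buys: the identity $\C/(a)\cong\mathcal{D}/(qa)$ and the formula $\cdeg(\RR)=\l(\C/(a))$ are exactly the ingredients that get reused throughout the rest of the paper (Corollary~\ref{cdegr1}, Proposition~\ref{almostg}, the Sally module calculations), so proving the proposition through reductions sets up the toolbox they rely on later. One small presentational point worth tightening in a final write-up: state explicitly that a canonical ideal of a one-dimensional Cohen--Macaulay local ring is $\m$-primary (it contains a non-zerodivisor, hence has height one), since both $\e_0(\C)$ and $\l(\RR/\C)$ only make sense under that hypothesis, and likewise that $a\C$ and $b\C$ remain canonical ideals because $a,b$ are non-zerodivisors.
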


\begin{proof}  If $x$ is an indeterminate over $\RR$, in calculating these differences we may pass from $\RR$
to $\RR(x) = \RR[x]_{\m \RR[x]}$, in particular we may assume that the ring has an infinite residue field.

Let $\C$ and $\mathcal{D}$ be two canonical ideals. Suppose $(a)$ is a minimal reduction of $\C$.
Since $\mathcal{D} \simeq \C$ (\cite[Theorem 3.3.4]{BH}), $\mathcal{D} = q \C$ for some fraction $q$.  If $\C^{n+1} = (a) \C^n$
 by multiplying it by $q^{n+1}$,  we get $\mathcal{D}^{n+1} =(qa) \mathcal{D}^n$, where $(qa) \subset \mathcal{D}$. Thus $(qa)$ is a reduction of $\mathcal{D}$ and
$\C/(a) \simeq \mathcal{D}/(qa)$. Taking their co-lengths we have
\[ \l(\RR/(a)) - \l(\RR/ \C) = \l(\RR/(qa)) - \l(\RR/ \mathcal{D}).\]
  Since $\l(\RR/(a)) = \e_0(\C)$ and $\l(\RR/(qa)) = \e_0(\mathcal{D})
 $,
we have
\[ \e_0(\C) - \l(\RR/ \C) = \e_0( \mathcal{D}) - \l(\RR/ \mathcal{D}).\]
  \end{proof}

 We can define $\cdeg(\RR)$ in full generality as follows.

 \begin{Theorem}\label{gencdeg1} Let $(\RR, \m)$ be a Cohen-Macaulay local ring  of dimension $d \geq 1$  that has a canonical ideal $\C$.
  Then
  \[ \cdeg(\RR)= \sum_{\tiny \height \p=1} \cdeg(\RR_{\p}) \deg(\RR/\p) = \sum_{\tiny \height \p=1} [\e_{0}(\C_{\p}) - \l((\RR/\C)_{\p})] \deg(\RR/\p)
  \] is a well-defined finite sum independent of the chosen canonical ideal $\C$. In particular, if $\C$ is equimultiple with a minimal reduction $(a)$, then
  \[  \cdeg(\RR) = \deg(\C/(a)) = \e_0(\m, \C/(a)).\]
 \end{Theorem}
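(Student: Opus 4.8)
The plan is to bootstrap from the one‑dimensional case settled in Proposition~\ref{cdeg}. For a prime $\p$ of height $1$ the local ring $\RR_{\p}$ is Cohen--Macaulay of dimension $1$, and $\C_{\p}$ is a canonical ideal of $\RR_{\p}$: localization commutes with the formation of canonical modules, and $\C_{\p}$ contains a nonzerodivisor of $\RR_{\p}$ because $\C$ contains one of $\RR$ and nonzerodivisors stay nonzerodivisors after localization. Hence Proposition~\ref{cdeg} applies to $\RR_{\p}$, and $\cdeg(\RR_{\p})=\e_{0}(\C_{\p})-\l((\RR/\C)_{\p})$ is a well‑defined integer depending on $\RR_{\p}$ alone. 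After the harmless passage to $\RR_{\p}(x)$ carried out in that proof, a minimal reduction $(a_{\p})$ of $\C_{\p}$ exists and the colength bookkeeping there records
\[
\cdeg(\RR_{\p})=\l(\RR_{\p}/(a_{\p}))-\l(\RR_{\p}/\C_{\p})=\l_{\RR_{\p}}\bigl(\C_{\p}/(a_{\p})\bigr);
\]
in particular $\cdeg(\RR_{\p})=0$ precisely when $\C_{\p}$ is principal, i.e.\ when $\RR_{\p}$ is Gorenstein.

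First I would settle finiteness and independence. If $\p$ does not contain $\C$ then $\C_{\p}=\RR_{\p}$ and the $\p$‑term equals $0-0=0$; the height‑one primes that do contain $\C$ are exactly the minimal primes of $\C$ (as $\C$ contains a nonzerodivisor), of which there are only finitely many, so the sum is finite. For independence, let $\mathcal{D}$ be a second canonical ideal of $\RR$. For every height‑one prime $\p$ both $\C_{\p}$ and $\mathcal{D}_{\p}$ are canonical ideals of the one‑dimensional ring $\RR_{\p}$, so Proposition~\ref{cdeg} gives $\e_{0}(\C_{\p})-\l((\RR/\C)_{\p})=\e_{0}(\mathcal{D}_{\p})-\l((\RR/\mathcal{D})_{\p})$, while $\deg(\RR/\p)$ does not involve the canonical ideal; hence the two finite sums agree term by term. (Both are supported on the set of height‑one primes $\p$ with $\RR_{\p}$ non‑Gorenstein, which is manifestly independent of the choice of $\C$.)

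Then I would treat the equimultiple case. Suppose $\C$ is equimultiple with minimal reduction $(a)$; then $a$ is a nonzerodivisor and $\C^{\,n+1}=a\C^{\,n}$ for $n\gg0$. Put $M=\C/(a)$, a finitely generated $\RR$‑module with $\operatorname{Supp}(M)\subseteq V(\C)$, so $\dim M\le d-1$ since $\RR/\C$ is Cohen--Macaulay of dimension $d-1$. Localizing the reduction relation at a height‑one prime $\p$ shows that $(a)_{\p}$ is a minimal reduction of $\C_{\p}$, so
\[
\l_{\RR_{\p}}(M_{\p})=\l_{\RR_{\p}}\bigl(\C_{\p}/(a)_{\p}\bigr)=\e_{0}(\C_{\p})-\l((\RR/\C)_{\p})=\cdeg(\RR_{\p})
\]
by the first paragraph (here $(a)_{\p}$ is a parameter ideal and a reduction of $\C_{\p}$, so $\l(\RR_{\p}/(a)_{\p})=\e_{0}((a)_{\p})=\e_{0}(\C_{\p})$). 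As $\RR$ is Cohen--Macaulay, hence equidimensional and catenary, the primes $\p\in\operatorname{Supp}(M)$ with $\dim\RR/\p=d-1$ are exactly the ones of height $1$; feeding the local colengths above into the associativity formula for multiplicities gives
\[
\deg(\C/(a))=\e_{0}(\m,\C/(a))=\sum_{\height\p=1}\l_{\RR_{\p}}(M_{\p})\,\e_{0}(\m,\RR/\p)=\sum_{\height\p=1}\cdeg(\RR_{\p})\,\deg(\RR/\p)=\cdeg(\RR).
\]

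I expect the real work to sit in that last display: converting the family of local colengths $\l_{\RR_{\p}}(\C_{\p}/(a)_{\p})$ into the single multiplicity $\e_{0}(\m,\C/(a))$ through the associativity formula, and keeping the dimension bookkeeping honest — when the non‑Gorenstein locus of $\RR$ has dimension strictly less than $d-1$ both sides vanish, and $\deg(\C/(a))$ is then read as the $(d-1)$‑dimensional component $\sum_{\height\p=1}\l_{\RR_{\p}}(M_{\p})\,\deg(\RR/\p)$ of the multiplicity of $\C/(a)$. Everything else is routine: the passage to $\RR_{\p}(x)$ (or to $\RR(x)$) when an infinite residue field is wanted, and the fact that canonical ideals, reductions, and multiplicities behave well under localization and under this faithfully flat base change.
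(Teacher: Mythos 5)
Your proof is correct and follows essentially the same route as the paper's: both rest on Proposition~\ref{cdeg} for the local (one-dimensional) independence, the observation that the sum is supported on $\operatorname{Min}(\C)$ for finiteness, and the associativity formula for the equimultiple case. You simply supply the routine verifications (that $\C_{\p}$ is a canonical ideal of $\RR_{\p}$, that reductions localize, and that $\cdeg(\RR_{\p})=\l_{\RR_{\p}}(\C_{\p}/(a)_{\p})$) which the paper leaves implicit.
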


\begin{proof} By Proposition \ref{cdeg}, the integer $\cdeg(\RR_{\p})$ does not depend on the choice of a canonical ideal of $\RR$. Also $\cdeg(\RR)$ is a finite sum since, if $\p \notin \Min(\C)$, then $\C_{\p}=\RR_{\p}$ so that $\RR_{\p}$ is Gorenstein. Thus $\cdeg(\RR_{\p})=0$. The last assertion follows from the associativity formula:
\[ \cdeg(\RR) =  \sum_{\tiny \height \p=1} \l( (\C/(a))_{\p}) \deg(\RR/\p) =  \deg(\C/(a)).\]
\end{proof}

\begin{Definition}\label{defcdeg}{\rm
Let $(\RR, \m)$ be a Cohen-Macaulay local ring  of dimension $d \geq 1$ that has a canonical ideal.
  Then the {\em canonical degree of $\RR$} is the integer
  \[ \cdeg(\RR)= \sum_{\tiny \height \p=1} \cdeg(\RR_{\p}) \deg(\RR/\p).
  \]
  }\end{Definition}

\begin{Corollary}
$\cdeg(\RR)\geq 0$ and vanishes if and only if  $\RR$ is Gorenstein in codimension $1$.
\end{Corollary}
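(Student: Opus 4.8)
The statement to prove is that $\cdeg(\RR) \geq 0$, with equality if and only if $\RR$ is Gorenstein in codimension $1$. The plan is to reduce everything to the local, one-dimensional situation via the defining formula
\[
\cdeg(\RR) = \sum_{\height \p = 1} \cdeg(\RR_\p)\,\deg(\RR/\p),
\]
which is a finite sum by Theorem \ref{gencdeg1}. Since each $\deg(\RR/\p)$ is a positive integer, it suffices to show that each summand $\cdeg(\RR_\p)$ is a nonnegative integer, and that it vanishes precisely when $\RR_\p$ is Gorenstein; then the full sum is nonnegative, and it is zero if and only if every $\cdeg(\RR_\p)$ is zero, i.e. if and only if $\RR_\p$ is Gorenstein for every height-one prime $\p$, which is exactly the condition that $\RR$ be Gorenstein in codimension $1$.

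So the heart of the matter is the one-dimensional Cohen-Macaulay local ring $(\RR_\p, \p\RR_\p)$ with canonical ideal $\C_\p$, where I must show $\cdeg(\RR_\p) = \e_0(\C_\p) - \l((\RR/\C)_\p) \geq 0$, with equality iff $\RR_\p$ is Gorenstein. First I would pass to $\RR(x)$ as in the proof of Proposition \ref{cdeg} so that the residue field is infinite and a minimal reduction $(a)$ of $\C_\p$ exists; this does not change $\cdeg$. Then, using $\e_0(\C_\p) = \l(\RR_\p/(a))$ (the standard fact that the multiplicity of an $\m$-primary ideal in a one-dimensional Cohen-Macaulay ring equals the colength of any minimal reduction), one gets
\[
\cdeg(\RR_\p) = \l(\RR_\p/(a)) - \l(\RR_\p/\C_\p) = \l(\C_\p/(a)),
\]
which is manifestly $\geq 0$ since $(a) \subseteq \C_\p$. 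For the equality case: $\cdeg(\RR_\p) = 0$ forces $\l(\C_\p/(a)) = 0$, hence $\C_\p = (a)$ is principal; a canonical ideal that is principal means $\RR_\p \simeq \C_\p$ is its own canonical module, i.e. $\RR_\p$ is Gorenstein. Conversely, if $\RR_\p$ is Gorenstein then $\C_\p$ may be taken to be $\RR_\p$ itself, or more to the point any canonical ideal is principal, so $\C_\p = (a)$ and $\cdeg(\RR_\p) = 0$; independence of the chosen canonical ideal (Proposition \ref{cdeg}) guarantees this is unambiguous.

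I do not expect a genuine obstacle here — the Corollary is essentially a bookkeeping consequence of Theorem \ref{gencdeg1} together with the localization/associativity formula and the elementary fact that a principal canonical ideal characterizes the Gorenstein property. The one point requiring a little care is the equality case in the ``only if'' direction: one must note that $\cdeg(\RR) = 0$ means \emph{every} height-one summand vanishes (each is nonnegative, so no cancellation can occur), and then invoke that a one-dimensional Cohen-Macaulay local ring with principal canonical ideal is Gorenstein — this last is standard (the canonical module of a Gorenstein ring is free of rank one, and conversely a cyclic canonical module forces the ring to equal its own canonical module). Assembling these observations gives the result.
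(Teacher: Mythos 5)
Your proof is correct and spells out what the paper leaves implicit: the Corollary is stated without proof, as an immediate consequence of Theorem \ref{gencdeg1} and Proposition \ref{cdeg}, and your argument — reduce to each height-one localization, identify $\cdeg(\RR_\p) = \l(\C_\p/(a)) \geq 0$ via a minimal reduction, note that no cancellation occurs across primes, and use that a principal canonical ideal characterizes the Gorenstein property — is exactly the intended reasoning. The one-sentence caveat is that you should be explicit that $\C_\p$ is indeed a canonical ideal of $\RR_\p$ whenever $\p \in V(\C)$ (and equals $\RR_\p$ otherwise), but this is covered by the localization properties of canonical modules already used in the proof of Theorem \ref{gencdeg1}.
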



\begin{Corollary}\label{cdegr1}
Suppose that the canonical ideal of $\RR$ is equimultiple. Then we have the following.
\begin{enumerate}
\item[{\rm (1)}] $\cdeg(\RR) \geq r(\RR)-1$.
\item[{\rm (2)}] $\cdeg(\RR) =0$ if and only if $\RR$ is Gorenstein.
\end{enumerate}
\end{Corollary}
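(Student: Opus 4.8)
The idea is to reduce everything to the one-dimensional equimultiple situation already analyzed in Theorem~\ref{gencdeg1}, where $\cdeg(\RR) = \deg(\C/(a)) = \e_0(\m, \C/(a))$ for a minimal reduction $(a)$ of $\C$. First I would pass to $\RR(x)$ so that the residue field is infinite and a minimal reduction $(a)$ of $\C$ exists; note that equimultiplicity is preserved and the canonical degree is unchanged under this faithfully flat extension. Then, since $\C$ is equimultiple, $\ell(\C) = \height \C = 1$, so a single element $a$ generates a reduction, $(a)$ is a parameter on $\C$, and $\C/(a)$ is a module of finite length (indeed annihilated by a power of $\m$), with $\cdeg(\RR) = \l(\C/(a))$ after the associativity formula collapses the sum.

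For part (1), the point is that $\C/(a)$ contains a copy of the socle-type quotient forcing its length to be at least $r(\RR)-1$. Concretely, $a \in \C$ is a nonzerodivisor, and the exact sequence $0 \to (a) \to \C \to \C/(a) \to 0$ together with $\nu((a)) = 1$ gives $\nu(\C/(a)) \geq \nu(\C) - 1 = r(\RR) - 1$; and since $\l(\C/(a)) \geq \nu(\C/(a))$ for any finite length module, we conclude $\cdeg(\RR) = \l(\C/(a)) \geq r(\RR) - 1$. The only subtlety is making sure $a$ can be chosen inside $\m\C$ — otherwise $a$ could be part of a minimal generating set and $\nu(\C/(a))$ would only be $\geq r(\RR)-1$ still, so in fact this is automatic; but one should check $a \in \m\C$ can be arranged since with infinite residue field a general element of a reduction of $\C$ lies in $\m\C$ precisely when $\C$ is not principal, and if $\C$ is principal then $\RR$ is Gorenstein and both sides are $0$.

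For part (2), one direction is immediate: if $\RR$ is Gorenstein then $\C \simeq \RR$, which we may take to be $\RR$ itself, and then $\C/(a) = \RR/(a)$ with $(a)$ already equal to $\C$, so $\cdeg(\RR) = \l(\RR/\RR) = 0$ — more carefully, $\C = (a)$ so $\C/(a) = 0$. Conversely, if $\cdeg(\RR) = 0$ then $\l(\C/(a)) = 0$, i.e.\ $\C = (a)$ is principal (equivalently $r(\RR) = 1$, which already follows from part (1)). A principal canonical ideal means $\C$ is a free $\RR$-module of rank one, hence $\RR$ is Gorenstein by the standard characterization that a Cohen--Macaulay local ring is Gorenstein iff it has a canonical module that is free.

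**Main obstacle.** The genuinely delicate step is the reduction showing that in the equimultiple case the associativity-formula sum in Definition~\ref{defcdeg} collapses to the single number $\l(\C/(a))$ computed in a way compatible with a global minimal reduction — but this is exactly the content of the last assertion of Theorem~\ref{gencdeg1}, which I am entitled to invoke. Given that, parts (1) and (2) are short: (1) is the inequality $\l(M) \ge \nu(M)$ applied to $M = \C/(a)$ after bounding $\nu(\C/(a))$ from below, and (2) is the freeness criterion for the Gorenstein property. The one place to be careful is the principal case, which I would dispatch separately at the outset so that in the main argument I may assume $\C$ is not principal and hence $a \in \m\C$.
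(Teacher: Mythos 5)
Your proposal follows the same basic route as the paper (invoke the equimultiple formula of Theorem~\ref{gencdeg1}, then bound from below by a minimal number of generators), but it contains a genuine error that makes the argument fail for $d\ge 2$: you assert that $(a)$ is a system of parameters for $\C$ and that $\C/(a)$ has finite length. That is only true when $d=1$. For $d\ge 2$, $\C$ has dimension $d$ (it contains a nonzerodivisor), a single element cannot be a system of parameters, and $\C/(a)$ is a module of dimension $d-1$ supported on $V(\C)$, so $\l(\C/(a))$ is infinite. Consequently, the identity you use, $\cdeg(\RR)=\l(\C/(a))$, is not what Theorem~\ref{gencdeg1} gives; the correct statement is $\cdeg(\RR)=\deg(\C/(a))=\e_0(\m,\C/(a))$, a multiplicity, not a length. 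The key inequality must therefore be $\e_0(\m,\C/(a))\ge\nu(\C/(a))$, not $\l(M)\ge\nu(M)$. That multiplicity inequality is \emph{not} a formal fact about all modules (it can fail for non-Cohen--Macaulay ones), so one must justify it. Here it holds because $\C/(a)$ is a Cohen--Macaulay module of dimension $d-1$: from $0\to\C/(a)\to\RR/(a)\to\RR/\C\to 0$ with $\RR/(a)$ and $\RR/\C$ both CM of dimension $d-1$, the depth lemma gives $\depth(\C/(a))\ge d-1$, and then for a CM module $M$ with $\dim M=s$ one has $\e_0(\m,M)=\l(M/\q M)\ge\l(M/\m M)=\nu(M)$ where $\q$ is generated by a system of parameters for $M$ giving a minimal reduction of $\m$ modulo $\ann M$. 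This is the content hidden in the paper's one-line inequality.

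You also have the relation between $a$ and $\m\C$ backwards. A generator of a minimal reduction of an ideal of analytic spread one is \emph{never} in $\m\C$: its image in $\C/\m\C$ must be a degree-one Noether normalization of the fiber cone, hence nonzero. (More directly, $a\in\m\C$ together with $\C^{n+1}=a\C^n$ would force $\C^{n+1}\subseteq\m\C^{n+1}$ and hence $\C^{n+1}=0$ by Nakayama, impossible.) So in fact $\nu(\C/(a))=r(\RR)-1$ exactly, which is what the paper uses. Your fallback observation that $\nu(\C/(a))\ge r(\RR)-1$ holds regardless saves part (1) from this confusion, and your parenthetical for part (2)---that $\cdeg(\RR)=0$ forces $r(\RR)-1\le 0$ hence $r(\RR)=1$ hence Gorenstein---is exactly the paper's argument. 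But the finite-length claim is a real gap you would need to repair for the general case $d\ge 2$.
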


\begin{proof}
Let $(a)$ be a minimal reduction of the canonical ideal $\C$. Then
\[ \cdeg(\RR) = \e_0(\m, \C/(a))\geq \nu(\C/(a))=r(\RR)-1.\]
If $\cdeg(\RR)=0$ then $r(\RR)=1$, which proves that $\RR$ is Gorenstein.
\end{proof}

 Now we extend the above result to a more general class of ideals when the ring has dimension one. We recall that if $\RR$ is a $1$-dimensional
 Cohen-Macaulay local ring, $I$ is an $\m$-primary ideal with  minimal reduction $(a)$,
 then the reduction number of $I$ relative to $(a)$ is independent of the reduction
 (\cite[Theorem 1.2]{Trung87}). It will be denoted simply by $\red(I)$.

\begin{Proposition} Let $(\RR, \m)$ be a $1$-dimensional  Cohen-Macaulay local ring which is not a valuation ring. Let  $I$ be an irreducible $\m$-primary ideal such that $I \subset \m^2$.  If $(a)$
is a minimal reduction of $I$, then $\lambda(I/(a)) \geq r(\RR)-1$. In the case of equality, $\red(I) \leq 2$.
\end{Proposition}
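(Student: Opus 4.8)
The plan is to transfer both assertions to the Artinian quotient $B:=\RR/(a)$. As in the proof of Proposition~\ref{cdeg}, I would first replace $\RR$ by $\RR(x)=\RR[x]_{\m\RR[x]}$, which changes none of $\lambda(I/(a))$, $r(\RR)$ or $\red(I)$, so I may assume $k:=\RR/\m$ is infinite and $(a)$ is a principal minimal reduction. The two facts I would lean on are: $r(\RR)=\dim_k\mathrm{soc}(B)$, since the Cohen--Macaulay type is unchanged modulo a nonzerodivisor, with $\mathrm{soc}(B)=\big((a):_\RR\m\big)/(a)$; and, since $I$ is $\m$-primary, $I$ is irreducible exactly when $\RR/I=B/\overline{I}$ (writing $\overline{I}:=I/(a)$) is Artinian Gorenstein, i.e.\ $\dim_k\mathrm{soc}(\RR/I)=1$.

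For the inequality: because $(a)\subseteq I$, reduction modulo $I$ carries $\mathrm{soc}(B)$ into $\mathrm{soc}(\RR/I)$, with kernel $\mathrm{soc}(B)\cap\overline{I}$ (a $k$-subspace of $\overline{I}$) and with one-dimensional target. Hence
\[ \lambda(I/(a))=\dim_k\overline{I}\ \ge\ \dim_k\!\big(\mathrm{soc}(B)\cap\overline{I}\big)\ \ge\ \dim_k\mathrm{soc}(B)-1\ =\ r(\RR)-1 . \]
Neither $I\subseteq\m^2$ nor the hypothesis on valuation rings enters here; their role shows up only in the equality case.

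So suppose $\lambda(I/(a))=r(\RR)-1$; then both displayed inequalities are equalities, forcing $\overline{I}=\mathrm{soc}(B)\cap\overline{I}$, i.e.\ $\overline{I}\subseteq\mathrm{soc}(B)$, equivalently $\m I\subseteq(a)$, whence $I^2\subseteq\m I\subseteq(a)$. If $I=(a)$ then $\RR$ is Gorenstein and $\red(I)=0$, so assume $I\ne(a)$ and put $K:=(I^2:_\RR a)$; then $I^2=aK$, $I\subseteq K$ (since $aI\subseteq I^2$), and $I^3=a(IK)$ while $aI^2=a(aK)$, so $\red(I)\le 2\iff IK\subseteq I^2\iff(I^2:a)=(I^2:I)$. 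Two steps bring this within reach. First, $I^2=(a)$ is impossible for a minimal reduction (it would force $I^{n+1}\ne aI^{n}$ for every $n$), so $a\notin I^2$; hence $1\notin K$, $K$ is a proper ideal, and $K\subseteq\m$. Second, from $\overline{I}\subseteq\mathrm{soc}(B)$ and $\overline{K}=K/(a)\subseteq\overline{\m}$ we get $\overline{I}\,\overline{K}\subseteq\mathrm{soc}(B)\cdot\overline{\m}=0$, i.e.\ $IK\subseteq(a)$.

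The step I expect to be the real obstacle is upgrading $IK\subseteq(a)$ to the sharper $IK\subseteq aK=I^2$: the two differ only by dividing by $a$ and replacing $\RR$ by $K$, but the gap is genuine --- there are \emph{reducible} $\m$-primary ideals with $\m I\subseteq(a)$, even with $I\subseteq\m^2$, whose reduction number exceeds $2$ (for instance $(t^{10},t^{11})$ in $k[[t^5,t^6,t^7,t^8,t^9]]$) --- so closing it must use irreducibility beyond $\dim_k\mathrm{soc}(\RR/I)=1$. Here I would bring in the additional consequence of irreducibility that holds in the equality case, namely $I:_\RR\m=(a):_\RR\m$ (equivalently $\overline{I}:_B\overline{\m}=\mathrm{soc}(B)$), and combine it with $I\subseteq\m^2$ to pin $K$ down tightly enough to force $IK\subseteq I^2$; this would finish both the bound $\red(I)\le 2$ and the proof.
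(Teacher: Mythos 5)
Your argument for the inequality is correct and is essentially the paper's argument in different clothing. The paper works with the chain $(a)\subseteq N\subseteq L$, where $N=(a):\m$ and $L=I:\m$, and computes $\lambda(L/(a))$ two ways: $\lambda(L/N)+\lambda(N/(a))\ge\lambda(N/(a))=r(\RR)$ and $\lambda(L/I)+\lambda(I/(a))=1+\lambda(I/(a))$. Your socle-dimension count in $B=\RR/(a)$ packages exactly the same information: $\dim_k\mathrm{soc}(B)=\lambda(N/(a))=r(\RR)$, $\dim_k\mathrm{soc}(\RR/I)=\lambda(L/I)=1$, and your map $\mathrm{soc}(B)\to\mathrm{soc}(\RR/I)$ with kernel $\overline{I}\cap\mathrm{soc}(B)$ is the paper's comparison through $N\subseteq L$. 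You are also right that only irreducibility enters this half, not $I\subseteq\m^2$ nor the exclusion of valuation rings.

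For the equality case, the gap you flag is real and your write-up does not close it. You correctly isolate the extra information coming from equality: $\overline{I}\subseteq\mathrm{soc}(B)$ and, as you note parenthetically, $I:\m=(a):\m$, that is $L=N$. But passing from $\m I\subseteq(a)$ and $IK\subseteq(a)$ to the sharper $IK\subseteq I^2$ (with $K=I^2:a$) is the whole content of the bound, and your example in $k[[t^5,\dots,t^9]]$ correctly shows that what you have at that point does not suffice. The paper finishes by stacking three external results, and this is exactly where the remaining hypotheses are consumed. Since $L=N$, \cite[Lemma 3.6]{CHV} (which is where $I\subseteq\m^2$ enters) gives that $L$ is integral over $I$, hence over $(a)$; \cite[Theorem 2.3]{CP95} (where the ring not being a DVR enters) then yields $\red(N)=1$, i.e.\ $L^2=aL$; and \cite[Proposition 2.6]{GNO}, fed with $\lambda(L/I)=1$ and $L^2=aL$, gives $I^3=aI^2$, hence $\red(I)\le 2$. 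Each of these is a substantive result rather than a formal manipulation, so without citing them, or supplying a replacement argument at the precise point you identified, the equality half of your proof remains open.
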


\begin{proof}
Let $L = I:\m$ and $N=(a): \m$. Then ${\ds \l(L/ I) = r(\RR/ I) = 1}$ and ${\ds \l( N / (a) ) = r(\RR)}$. Thus, we have
\[ r(\RR) \leq  \l(L/ N )  +  \l( N / (a) ) = \l(L/(a) ) = \l(L/ I)  + \l( I/(a) ) = 1 + \l (I/(a) ),\]
which proves that $\lambda(I/(a)) \geq r(\RR)-1$.

\medskip

\noindent Suppose that $\lambda(I/(a)) = r(\RR)-1$. Then $L=N$. By \cite[Lemma 3.6]{CHV}, $L$ is integral over $I$. Thus, $L$ is integral over $(a)$. By \cite[Theorem 2.3]{CP95}, $\red(N)=1$. Hence $L^2 = a L$.  Since $\lambda(L/ I) = 1 $, by
\cite[Proposition 2.6]{GNO}, we have $I^3 = a I^2$.
\end{proof}

\section{Extremal values of the canonical degree}
\noindent
We examine in this section extremal values of the canonical degree.
First we recall the definition of almost Gorenstein rings  (\cite{BF97, GMP11, GTT15}).

\begin{Definition}{\rm (\cite[Definition 3.3]{GTT15})  A Cohen-Macaulay local ring $\RR$ with a canonical module $K_{\RR}$  is said to be an {\em almost Gorenstein} ring if there exists an exact sequence of $\RR$-modules
${\ds 0\rightarrow\RR\rightarrow K_{\RR} \rightarrow X\rightarrow  0}$
such that $\nu(X)=\e_0(X)$.
}\end{Definition}


\begin{Proposition}\label{almostg}
Let $(\RR,\m)$ be a Cohen-Macaulay local ring with a canonical ideal $\C$. Assume that $\C$ is equimultiple.
If $\cdeg (\RR) = r(\RR) -1$, then $\RR$ is an almost Gorenstein ring. In particular,  if $\cdeg (\RR) \le 1$, then $\RR$ is an almost Gorenstein ring.
\end{Proposition}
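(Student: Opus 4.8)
The plan is to reduce immediately to the $1$-dimensional case and then exhibit the required exact sequence. Since $\C$ is equimultiple, I may pass to $\RR(x)$ (as in the proof of Proposition~\ref{cdeg}) to assume the residue field is infinite, pick a minimal reduction $(a)$ of $\C$, and then factor out a general element of this reduction. More precisely, I would argue that $a$ is part of a system of parameters and that $\RR/(a)$ is again Cohen-Macaulay of dimension $d-1$ with canonical ideal $\C/(a)$; the hypothesis $\cdeg(\RR) = r(\RR)-1$ descends to $\cdeg(\RR/(a)) = r(\RR/(a)) - 1$ because $\cdeg(\RR) = \e_0(\m, \C/(a))$ by Theorem~\ref{gencdeg1} and $r(\RR) = \nu(\C) = \nu(\C/(a)) = r(\RR/(a))$ when $\C$ is equimultiple. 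Since the almost Gorenstein property is known to be detected on a general hyperplane section (this is one of the structural results in \cite{GTT15}), it suffices to treat $\dim \RR = 1$.

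In dimension one, set $\overline{\RR} = \RR/(a)$, which is Artinian, and consider $\overline{\C} = \C/(a)$, a canonical module of $\overline{\RR}$. I want an exact sequence $0 \to \RR \to \C \to X \to 0$ with $\nu(X) = \e_0(X)$. The natural candidate is to embed $\RR \hookrightarrow \C$ via multiplication by a suitable element so that the image is generated by the "top socle piece"; concretely, choose the embedding so that $\RR \to \C$ hits a minimal generator, and let $X$ be the cokernel. Then $\nu(X) = \nu(\C) - 1 = r(\RR) - 1$ (here one checks the chosen generator can be taken to be a minimal generator, so Nakayama gives $\nu(X) = \nu(\C)-1$), and $\e_0(X) = \e_0(\m, X) = \e_0(\m, \C) - \e_0(\m, \RR)$, which, after reducing mod $(a)$, equals $\l(\C/(a)) - \l(\RR/(a)) = \e_0(\C) - \l(\RR/\C) = \cdeg(\RR) = r(\RR)-1$. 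Hence $\nu(X) = \e_0(X)$, as required.

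The main obstacle I anticipate is the bookkeeping around the embedding $\RR \hookrightarrow \C$: one must choose it so that (a) the image is a direct summand-free copy giving a torsion-free cokernel with $\dim X = \dim \RR$ (so that $\e_0(X)$ makes sense and behaves additively), and (b) the image meets $\m\C$ in the expected way so that $\nu(X) = \nu(\C) - 1$ exactly, rather than merely $\geq \nu(\C)-1$. This is where the hypothesis $\C \not\subset$ (a power of $\m$ forcing more generators to die) or, more precisely, the equality case is used: the chain of inequalities $r(\RR)-1 = \nu(\C) - 1 \leq \nu(X) \leq \e_0(X) = \cdeg(\RR) = r(\RR)-1$ must collapse, which forces $\nu(X) = \e_0(X)$ on the nose. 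So really the argument is: produce any short exact sequence $0 \to \RR \to \C \to X \to 0$ with $X$ of dimension $1$, note $\nu(X) \geq \nu(\C)-1$ always, note $\e_0(X) = \cdeg(\RR)$ always, and invoke $\nu(X) \leq \e_0(X)$ (valid for a faithful module over a $1$-dimensional Cohen-Macaulay ring, since $\nu(X) = \l(X/\m X) \leq \e_0(\m, X)$) together with the hypothesis to squeeze out equality. The final sentence of the proposition is then immediate: if $\cdeg(\RR) \leq 1$ then, by Corollary~\ref{cdegr1}(1), $r(\RR) - 1 \leq \cdeg(\RR) \leq 1$, and if $\cdeg(\RR)=0$ the ring is Gorenstein (hence trivially almost Gorenstein), while if $\cdeg(\RR)=1$ then $r(\RR)-1 \leq 1$ and either $r(\RR)=1$ (Gorenstein) or $r(\RR)=2=\cdeg(\RR)+1$, so the equality case applies.
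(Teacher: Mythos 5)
Your core idea coincides with the paper's: take a minimal reduction $(a)$ of $\C$ and use the exact sequence
\[ 0 \to \RR \xrightarrow{a} \C \to X \to 0, \qquad X = \C/(a),\]
then check $\nu(X) = \e_0(X)$. But the exposition around this idea has real problems.

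First, the reduction to dimension one is both flawed and unnecessary. If $a \in \C$ is $\RR$-regular, the canonical module of $\RR/(a)$ is $\C/a\C$, not $\C/(a)$; these differ by $(a)/a\C \cong \RR/\C$. Moreover, the ascent ``$\RR/(a)$ almost Gorenstein $\Rightarrow$ $\RR$ almost Gorenstein'' is not automatic from \cite{GTT15}. Fortunately, the exact sequence above already lives over $\RR$ in any dimension $d \geq 1$, so no dimension reduction is needed.

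Second, the displayed computation ``$\e_0(X) = \e_0(\m,\C) - \e_0(\m,\RR)$, which $\ldots$ equals $\l(\C/(a)) - \l(\RR/(a))$'' is wrong: since $\dim X < \dim \RR$, the exact sequence forces $\e_0(\m,\C) = \e_0(\m,\RR)$, and $\l(\C/(a)) - \l(\RR/(a))$ is $\leq 0$. The correct statement is just $\e_0(X) = \e_0(\m, \C/(a)) = \deg(\C/(a)) = \cdeg(\RR)$, which is exactly the last formula of Theorem~\ref{gencdeg1}. In your last paragraph the dimension of $X$ is also misstated: in the $1$-dimensional case $X = \C/(a)$ is Artinian, of dimension $0$, not $1$, and it is not faithful.

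Third, the squeeze $\nu(\C)-1 \leq \nu(X) \leq \e_0(X)$ works but is an unnecessary detour. Since $(a)$ is a minimal reduction, $a \notin \m\C$ (else $\C^{n+1} = a\C^n \subseteq \m\C^{n+1}$, so $\C^{n+1} = 0$ by Nakayama), so $a$ is part of a minimal generating set of $\C$ and hence $\nu(X) = \nu(\C) - 1 = r(\RR) - 1$ exactly, with no inequality needed. Combined with $\e_0(X) = \cdeg(\RR) = r(\RR) - 1$, this gives $\nu(X) = \e_0(X)$ directly, which is the paper's argument. Your handling of the ``in particular'' clause via Corollary~\ref{cdegr1} is correct.
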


\begin{proof}
We may assume that $\RR$ is not a Gorenstein ring. Let $(a)$ be a minimal reduction of $\C$.
Consider the exact sequence of $\RR$-modules
\[ 0 \to \RR \overset{\varphi}{\to} \C \to X \to 0, \;\; \mbox{where} \;\; \varphi (1) = a. \] Then $\nu(X)=r(\RR) -1=\cdeg (\RR)=\e_0(X)$. Thus, $\RR$ is an almost Gorenstein ring.
\end{proof}

\begin{Proposition}\label{1dimalmostg}
Let $(\RR,\m)$ be a $1$-dimensional Cohen-Macaulay local ring with a canonical ideal $\C$. Then $\cdeg(\RR)= r(\RR)-1$ if and only if $\RR$ is an almost Gorenstein ring.
\end{Proposition}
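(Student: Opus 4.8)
The plan is to establish both implications. The forward direction $\Rightarrow$ is essentially Proposition \ref{almostg}: in dimension one a canonical ideal $\C$ is automatically $\m$-primary, hence equimultiple, so the hypothesis $\cdeg(\RR) = r(\RR)-1$ puts us exactly in the situation of that proposition, and we conclude that $\RR$ is almost Gorenstein. (If $\RR$ is a valuation ring or more generally Gorenstein, then $r(\RR)=1$ and $\cdeg(\RR)=0$ by Corollary \ref{cdegr1}(2), and $\RR$ is trivially almost Gorenstein via the identity map $\RR \to \C \simeq \RR$; so we may assume $\RR$ is not Gorenstein.) Thus the forward implication needs almost no new argument beyond invoking what is already proved.

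For the converse, suppose $\RR$ is almost Gorenstein, so there is an exact sequence ${0 \to \RR \overset{\varphi}{\to} \C \to X \to 0}$ with $\nu(X) = \e_0(X)$. First I would observe that $\varphi(1)$ is a nonzerodivisor (since $\RR$ is Cohen-Macaulay of dimension one and $\C$ is a faithful maximal Cohen-Macaulay module), so after identifying $\C$ with a fractional ideal we may write $\varphi(1) = a$ for some nonzerodivisor $a \in \C$, and $X \cong \C/(a)$. The key point is that the minimal generic reduction $(a)$ of $\C$ used in the definition of $\cdeg$ can be taken to be this $a$: indeed $\cdeg(\RR) = \e_0(\m, \C/(a)) = \e_0(X)$ by Theorem \ref{gencdeg1}, provided $(a)$ is a (minimal) reduction of $\C$. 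Here is where I expect the main obstacle: one must show that the embedding $\varphi$ forced by the almost Gorenstein condition can be chosen so that $\varphi(1)$ generates a reduction of $\C$ — equivalently, that $a$ is part of a minimal reduction. In dimension one with infinite residue field this is the statement that a generic element of $\C$ generates a reduction; after passing to $\RR(x)$ (which changes neither $\cdeg$, nor $r$, nor the almost Gorenstein property) a generic $\RR$-linear combination of generators of $\C$ works, and one checks that the almost Gorenstein sequence for such a generic choice still has $\nu(X)=\e_0(X)$ because $\e_0(\C/(a))$ is independent of the minimal reduction $(a)$ and $\nu(\C/(a)) = r(\RR)-1$ exactly when $(a) \subset \m\C$, which holds for a minimal reduction.

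Granting that, the computation finishes quickly: $\cdeg(\RR) = \e_0(\C/(a)) = \e_0(X) = \nu(X) = \nu(\C/(a))$. Since $(a)$ is a minimal reduction, $a \in \m\C$, so $\nu(\C/(a)) = \nu(\C) - 1 = r(\RR) - 1$. Hence $\cdeg(\RR) = r(\RR)-1$, as desired. I would also note the alternative route for the converse that avoids the genericity subtlety: use the general inequality $\cdeg(\RR) \geq r(\RR)-1$ from Corollary \ref{cdegr1}(1), and then show that the existence of \emph{some} short exact sequence $0 \to \RR \to \C \to X \to 0$ with $\nu(X)=\e_0(X)$ forces equality; for this one compares $\e_0(X)$ for the given map with $\e_0(\C/(a))$ for a minimal reduction $(a)$ and argues that the almost Gorenstein condition $\nu(X)=\e_0(X)$ combined with $\nu(X) \geq r(\RR)-1$ (which always holds when the map sends $1$ into $\m\C$) pins down $\cdeg(\RR)=r(\RR)-1$. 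Either way, the crux is relating the abstract embedding in the definition of almost Gorenstein to a minimal reduction of $\C$.
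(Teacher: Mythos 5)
Your treatment of the forward implication is fine and matches the paper. For the converse you correctly locate the crux — one must relate the abstract map $\varphi : \RR \hookrightarrow \C$ provided by the almost Gorenstein condition to a minimal reduction of $\C$ — but the resolution you propose does not work, and there are two concrete problems with it.

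First, the claim that ``$\nu(\C/(a)) = r(\RR)-1$ exactly when $(a) \subset \m\C$, which holds for a minimal reduction'' is backwards: $\nu(\C/(a)) = \nu(\C)-1$ precisely when $a \notin \m\C$, and a generator of a minimal reduction is always outside $\m\C$ by Nakayama. Second, and more seriously, the genericity argument is circular. For a generic minimal reduction $(a)$ of $\C$ one has $\nu(\C/(a)) = r(\RR)-1$ and $\e_0(\C/(a)) = \lambda(\C/(a)) = \cdeg(\RR)$; so the assertion that ``the almost Gorenstein sequence for such a generic choice still has $\nu(X)=\e_0(X)$'' is \emph{equivalent} to the equality $\cdeg(\RR) = r(\RR)-1$ you are trying to prove. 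Nothing in the almost Gorenstein hypothesis, which only guarantees \emph{one} embedding with $\nu(X)=\e_0(X)$, transfers that property to a generic embedding. Your alternative route suffers from the same gap: comparing $\e_0(X)$ to $\e_0(\C/(a))$ for a minimal reduction requires knowing that $\lambda(\C/(\varphi(1)))$ computes $\cdeg(\RR)$, which again presupposes $(\varphi(1))$ is a reduction.

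The missing idea is that the almost Gorenstein condition already \emph{forces} $(b)$, where $b = \varphi(1)$, to be a reduction of $\C$ — there is no need to move to a generic element. Since $X$ has finite length, $\nu(X) = \e_0(X)$ reads $\lambda(X/\m X) = \lambda(X)$, hence $\m X = 0$, i.e., $\m\C \subseteq (b)$. Thus $\m(b) \subseteq \m\C \subseteq (b)$, and since $\lambda\bigl((b)/\m(b)\bigr) = 1$ there are only two possibilities: $\m\C = (b)$ or $\m\C = \m(b)$. The first would make $\m$ invertible, forcing $\RR$ to be a $\mathrm{DVR}$ (excluded once one reduces to the non-Gorenstein case); so $\m\C = \m(b)$. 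Now the determinantal (Cayley--Hamilton) trick shows every $c \in \C$ is integral over $(b)$, so $(b)$ is a reduction of $\C$. From there $\cdeg(\RR) = \lambda(\C/(b)) = \lambda(X) = \e_0(X) = \nu(X) = r(\RR)-1$, exactly as you wanted. This is the route the paper takes, and it bypasses any appeal to genericity.
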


\begin{proof}
It is enough to prove that the converse holds true. We may assume that $\RR$ is not a Gorenstein ring.  Let $(a)$ be a minimal reduction of $\C$.
Since $\RR$ is almost Gorenstein, there exists an exact sequence of $\RR$-modules
\[ 0 \to \RR \overset{\psi}{\to} \C \to Y \to 0 \;\; \mbox{such that} \;\; \nu(Y)=\e_0(Y).\]  Since $\dim(Y)=0$ by
\cite[Lemma 3.1]{GTT15}, we have that $\m Y=(0)$.
Let $b = \psi (1) \in \C$ and set $\q = (b)$. Then $\m \q \subseteq \m \C \subseteq \q$. Therefore, since $\RR$ is not a DVR and $\l(\q/\m \q) = 1$, we get $\m \C = \m  \q$, whence the ideal $\q $ is a reduction of $\C$ by the Cayley-Hamilton theorem, so that $\cdeg (\RR) = \e_0(Y) = \nu(Y) =r(\RR) - 1$.
\end{proof}


Now we consider the general case when a canonical ideal is not necessarily equimultiple.

\begin{Lemma}\label{prop1}
Let $(\RR, \m)$ be a Cohen-Macaulay local ring of dimension $d \geq 1$ with infinite residue field and a canonical ideal $\C$.
Let $a$ be an element of  $\C$ such that {\rm (i)} for $\forall \p \in \Ass (\RR/\C)$ the element $\frac{a}{1}$ generates a reduction of $\C R_\p$, {\rm (ii)} $a$ is $\RR$-regular, and {\rm (iii)} $a \not\in \m \C$.  Let ${\ds Z=\{ \p \in \Ass(\C/(a)) \mid \C \not\subseteq \p \} }$.

\begin{enumerate}
\item[{\rm (1)}] ${\ds \cdeg (\RR) = \deg(\C/(a)) - \sum_{\p \in Z}\lambda( (\C/(a))_\p) \deg(\RR/\p)}$.

\item[{\rm (2)}]  $\cdeg (\RR) = \deg(\C/(a))$ if and only if $\Ass(\C/(a)) \subseteq V(\C)$.
\end{enumerate}
\end{Lemma}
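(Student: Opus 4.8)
The plan is to evaluate $\deg(\C/(a))$ via the associativity formula for multiplicities and then read off both statements by a bookkeeping argument over the height-one primes of $\RR$.

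First I would record the finiteness facts about $\C/(a)$. Since $a$ is $\RR$-regular and kills $\C/(a)$, the support of $\C/(a)$ lies in $V(a)$ and contains no minimal prime of $\RR$; hence the height-one primes in $\mathrm{Supp}(\C/(a))$ are exactly its minimal primes, and there are only finitely many of them. Moreover, for any height-one prime $\p$ the ring $\RR_\p$ is $1$-dimensional Cohen-Macaulay and $a/1$ is a parameter, so $\RR_\p/(a)_\p$ is Artinian and its submodule $(\C/(a))_\p=\C_\p/(a)_\p$ has finite length. Thus the associativity formula gives the finite expression
\[ \deg(\C/(a)) = \sum_{\height \p = 1} \lambda\big((\C/(a))_\p\big)\,\deg(\RR/\p). \]

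Next I would split this sum according to whether $\C \subseteq \p$. If $\C \subseteq \p$, then (as $\C$ contains a regular element and $\height \p=1$) $\p$ is minimal over $\C$, so $\p \in \Ass(\RR/\C)$ and hypothesis (i) applies: $a/1$ generates a reduction of $\C_\p$, whence $\e_0(\C_\p)=\lambda(\RR_\p/(a)_\p)$, and feeding this into $0 \to \C_\p/(a)_\p \to \RR_\p/(a)_\p \to \RR_\p/\C_\p \to 0$ yields $\cdeg(\RR_\p)=\e_0(\C_\p)-\lambda((\RR/\C)_\p)=\lambda((\C/(a))_\p)$. If $\C \not\subseteq \p$, then $\C_\p=\RR_\p$, so $\RR_\p$ is Gorenstein, $\cdeg(\RR_\p)=0$, and $(\C/(a))_\p=\RR_\p/(a)_\p$. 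Hence, by Definition \ref{defcdeg}, the subsum over primes containing $\C$ equals $\sum_{\height\p=1}\cdeg(\RR_\p)\deg(\RR/\p)=\cdeg(\RR)$ (the terms with $\C\not\subseteq\p$ contributing zero), and the remaining contribution is the sum of $\lambda((\C/(a))_\p)\deg(\RR/\p)$ over the height-one primes $\p$ with $\C \not\subseteq \p$ and $(\C/(a))_\p \neq 0$. It then remains to identify these primes with $Z$: since $a$ is regular, $\RR/(a)$ is Cohen-Macaulay of dimension $d-1$, so $\Ass(\RR/(a))=\Min(\RR/(a))$ consists of height-one primes; thus every $\p \in Z$ has $(\C/(a))_\p=(\RR/(a))_\p \neq 0$ with $\p \in \Ass(\RR/(a))$, forcing $\height \p = 1$, while conversely a height-one prime with $\C \not\subseteq \p$ and $(\C/(a))_\p \neq 0$ lies in $\Ass(\C/(a))$ and outside $V(\C)$, i.e.\ in $Z$. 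This yields $\deg(\C/(a))=\cdeg(\RR)+\sum_{\p \in Z}\lambda((\C/(a))_\p)\deg(\RR/\p)$, which is (1).

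For (2) I would observe that each term $\lambda((\C/(a))_\p)\deg(\RR/\p)$ with $\p \in Z$ is strictly positive — $a \in \p$ forces $(\RR/(a))_\p \neq 0$, and $\deg(\RR/\p) \geq 1$ — so by (1) the equality $\cdeg(\RR)=\deg(\C/(a))$ holds if and only if $Z=\emptyset$, which is exactly the condition $\Ass(\C/(a)) \subseteq V(\C)$. I do not expect a genuine obstacle in this argument; the step that needs the most care is the last identification, where Cohen-Macaulayness of $\RR$ (through $\RR/(a)$) is used to guarantee that every associated prime of $\C/(a)$ outside $V(\C)$ has height one and is therefore actually seen by the associativity formula.
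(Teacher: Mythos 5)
Your argument is correct, and it is exactly the proof the paper has in mind: the authors' own proof reads in full ``It follows from Theorem \ref{gencdeg1} and $\deg(\C/(a)) = \sum_{\height \p=1} \lambda((\C/(a))_\p)\deg(\RR/\p)$,'' and your write-up simply fills in the bookkeeping over height-one primes (splitting by $\C\subseteq\p$ versus $\C\not\subseteq\p$, using hypothesis (i) to convert $\e_0(\C_\p)-\lambda((\RR/\C)_\p)$ into $\lambda((\C/(a))_\p)$, and using Cohen--Macaulayness of $\RR/(a)$ to see that all primes in $Z$ have height one). Nothing to change.
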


\begin{proof}
It follows from Theorem \ref{gencdeg1} and  ${\ds \deg(\C/(a)) = \sum_{\tiny \height \p=1} \l((\C/(a))_\p)\deg(\RR/\p)}$.
\end{proof}


\begin{Theorem}\label{almostg2} With the same notation given in Lemma {\rm \ref{prop1}}, suppose that $\Ass(\C/(a)) \subseteq V(\C)$. Then  $\cdeg (\RR) =r(\RR) - 1$ if and only if  $\RR$ is an almost Gorenstein ring.
\end{Theorem}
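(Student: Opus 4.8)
The plan is to read off both implications from a comparison between the cokernel $X=\C/(a)$ of multiplication by $a$ on $\RR$ and the cokernel coming from an almost Gorenstein presentation of $\RR$. I would first dispose of the Gorenstein case, where $\cdeg(\RR)=0=r(\RR)-1$ and $\RR$ is trivially almost Gorenstein; so assume from now on that $\RR$ is not Gorenstein. Since $a$ is $\RR$-regular, $0\to\RR\xrightarrow{\ \cdot a\ }\C\to X\to 0$ is exact, and $X$ is a torsion module because over the Gorenstein total ring of fractions this map becomes multiplication by a unit; hence $\dim X\le d-1$, while the depth lemma together with $\depth\C=\depth K_{\RR}=d$ gives $\depth X\ge d-1$. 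As $\C$ is not principal, $X\neq 0$, so $X$ is a maximal Cohen-Macaulay $\RR$-module of dimension $d-1$. From $a\notin\m\C$ we get $\nu(X)=\nu(\C)-1=r(\RR)-1$, and Lemma \ref{prop1}(2) together with the standing hypothesis $\Ass(\C/(a))\subseteq V(\C)$ gives $\cdeg(\RR)=\deg(\C/(a))=\e_0(X)$; since $X$ is Cohen-Macaulay of dimension $d-1$ this already forces $\cdeg(\RR)=\e_0(X)\ge\nu(X)=r(\RR)-1$.

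The forward implication is now immediate: if $\cdeg(\RR)=r(\RR)-1$, then $\e_0(X)=\cdeg(\RR)=r(\RR)-1=\nu(X)$, so transporting $0\to\RR\to\C\to X\to 0$ along an isomorphism $\C\cong K_{\RR}$ yields a presentation witnessing that $\RR$ is almost Gorenstein.

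For the converse, suppose $\RR$ is almost Gorenstein. As all canonical ideals of $\RR$ are isomorphic, a defining presentation may be rewritten $0\to\RR\xrightarrow{\ \cdot b\ }\C\to Y\to 0$ with $b\in\C$ an $\RR$-regular element and $\nu(Y)=\e_0(Y)$; as in the first paragraph $Y$ is Cohen-Macaulay of dimension $d-1$. The crux is a primewise comparison via the associativity formula $\e_0(M)=\sum_{\height \p=1}\lambda(M_\p)\deg(\RR/\p)$ applied to $M=X$ and $M=Y$. For a height-one prime $\p$: if $\p\notin V(\C)$, then $\Ass(\C/(a))\subseteq V(\C)$ forces $X_\p=0$ (a height-one prime in $\Supp X$ is associated), so $\lambda(X_\p)=0\le\lambda(Y_\p)$; if $\p\in V(\C)$, then condition (i) makes $a\RR_\p$ a reduction of $\C_\p$, so $\lambda(X_\p)=\e_0(a\RR_\p)-\lambda(\RR_\p/\C_\p)=\e_0(\C_\p)-\lambda(\RR_\p/\C_\p)=\cdeg(\RR_\p)$, whereas $b\RR_\p\subseteq\C_\p$ merely gives $\lambda(Y_\p)=\e_0(b\RR_\p)-\lambda(\RR_\p/\C_\p)\ge\cdeg(\RR_\p)$. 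Summing, $\cdeg(\RR)=\e_0(X)\le\e_0(Y)=\nu(Y)\le\nu(\C)=r(\RR)$. If $b\notin\m\C$, then $\nu(Y)=r(\RR)-1$ and equality holds throughout. If $b\in\m\C$, choose $\p\in\Min(\C)$, which is nonempty of height one since $\RR$ is not Gorenstein: then $b\RR_\p\subseteq\m_\p\C_\p$ cannot be a reduction of $\C_\p$, for otherwise $\C_\p^{n+1}=b\C_\p^{n}\subseteq\m_\p\C_\p^{n+1}$ would give $\C_\p^{n+1}=0$ by Nakayama, contradicting that $\C_\p$ contains a nonzerodivisor; hence $\e_0(b\RR_\p)>\e_0(\C_\p)$, the inequality $\lambda(X_\p)\le\lambda(Y_\p)$ is strict at this $\p$, and $\cdeg(\RR)=\e_0(X)<\e_0(Y)=\nu(Y)\le r(\RR)$. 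In either case $\cdeg(\RR)\le r(\RR)-1$, which with the lower bound of the first paragraph gives $\cdeg(\RR)=r(\RR)-1$.

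The one delicate point is the subcase $b\in\m\C$: an almost Gorenstein presentation need not send $1$ to a minimal generator of $\C$, and it is precisely the Nakayama/strict-multiplicity argument above that excludes the competing value $\cdeg(\RR)=r(\RR)$. Everything else is bookkeeping with Lemma \ref{prop1}, the associativity formula, and the depth lemma; the hypothesis $\Ass(\C/(a))\subseteq V(\C)$ enters exactly to control the support of $X$ and, through the Cohen-Macaulayness of $X$, to secure the lower bound $\cdeg(\RR)\ge r(\RR)-1$ in this non-equimultiple generality.
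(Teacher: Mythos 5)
Your overall strategy — Cohen–Macaulayness of $X=\C/(a)$ via the depth lemma, the identification $\cdeg(\RR)=\e_0(X)$ through Lemma \ref{prop1}(2), and the primewise comparison of $\lambda(X_\p)$ with $\lambda(Y_\p)$ via the associativity formula — is exactly the skeleton of the paper's argument, and your explicit treatment of the forward implication is a nice unpacking of what the paper leaves to "it suffices to prove the converse."

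The gap is in the subcase $b\in\m\C$. You claim that for $\p\in\Min(\C)$ the element $b$ lands in $\m_\p\C_\p$, so that $(b)\RR_\p$ cannot be a reduction of $\C_\p$ by Nakayama. But $\p$ has height $1$ and the theorem's scope is $d\ge 1$; whenever $d\ge 2$ such $\p$ is \emph{not} the maximal ideal, so $\m\not\subseteq\p$ and hence $\m\RR_\p=\RR_\p$. Localization of $\m\C$ at $\p$ therefore gives $(\m\C)_\p=\m\RR_\p\cdot\C\RR_\p=\C_\p$, not $\p\RR_\p\cdot\C_\p$. So $b\in\m\C$ yields only the useless inclusion $b\RR_\p\subseteq\C_\p$, and the strict inequality $\e_0(b\RR_\p)>\e_0(\C_\p)$ does not follow. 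In other words, the property "not a minimal generator of $\C$" is a statement at the closed point and is destroyed by localizing at a non-maximal prime.

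What this means for the proof: you cannot, by this route, rule out the possibility $\nu(Y)=r(\RR)$. The paper avoids the issue by simply \emph{asserting} $\nu(Y)=r(\RR)-1$, which is a consequence of the structure theory of almost Gorenstein rings in \cite{GTT15} (when $\RR$ is not Gorenstein, any presentation $0\to\RR\to K_{\RR}\to Y\to 0$ with $\nu(Y)=\e_0(Y)$ necessarily has the image of $1$ as a minimal generator of $K_{\RR}$, so $\nu(Y)=r(\RR)-1$). Your proof would be repaired by either (a) invoking that fact from \cite{GTT15} as the paper does, thereby eliminating the $b\in\m\C$ subcase altogether, or (b) giving a correct argument — necessarily not via localization at height-one primes — that $b\in\m\C$ is impossible. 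Everything else in your write-up (the depth/dimension count for $X$, the identity $\cdeg(\RR)=\e_0(X)$, the bound $\e_0(X)\ge\nu(X)$, the primewise inequality $\lambda(X_\p)\le\lambda(Y_\p)$, and the $b\notin\m\C$ case) is correct and matches the paper's computation.
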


\begin{proof}
It is enough to prove the converse holds true. We may assume that $\RR$ is not a Gorenstein ring. Choose an exact sequence
\[ 0 \to \RR \to \C \to Y \to 0 \]
such that $\deg(Y)= \nu(Y) = r(\RR) - 1$. Since $\Ass(\C/(a)) \subseteq V(\C)$, we have $\deg(Y) \geq \deg(\C/(a))$.  By Lemma \ref{prop1} and the proof of Corollary \ref{cdegr1}, we obtain the following:
\[ r(\RR) -1 \leq \cdeg(\RR) = \deg(C/(a)) \leq \deg(Y) = r(\RR)-1.\]
\end{proof}

\begin{Remark}{\rm If $\RR$ is a non-Gorenstein normal domain, then its canonical ideal cannot be equimultiple.}
\end{Remark}

\begin{proof}
 Suppose that a canonical ideal $\C$ of $\RR$ is equimultiple, i.e., $\C^{n+1}= a \C^{n}$.  Then we would have an equation $(n+1) [ \C] = n[\C]$ in its divisor class group. This means that $[\C]=[0]$. Thus, $\C \simeq \RR$. Hence $\C$ can not be equimultiple.
\end{proof}

\section{Canonical index}
\noindent
Throughout the section, let $(\RR, \m)$ be a Cohen-Macaulay local ring of dimension $d \geq 1$ with infinite residue field and suppose that a canonical ideal $\C$ exists.
We begin by showing  that the reduction number of a canonical ideal of $\RR$ is an invariant of  the ring.

\begin{Proposition}\label{Ca1}
Let $\C$ and $\mathcal{D}$ be canonical ideals of $\RR$. Then $\red(\C)=\red(\mathcal{D})$.
\end{Proposition}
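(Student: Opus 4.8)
The plan is to reduce the question to a statement about the reduction number that is stable under the isomorphism relating two canonical ideals, exactly in the spirit of the proof of Proposition~\ref{cdeg}. First I would pass to the faithfully flat extension $\RR(x)=\RR[x]_{\m\RR[x]}$ if necessary; since the residue field is already assumed infinite this step is not even needed here, but it guarantees the existence of minimal reductions generated by the right number of elements. Both $\C$ and $\mathcal D$ are canonical ideals, so by \cite[Theorem 3.3.4]{BH} they are isomorphic as $\RR$-modules; since they are (fractional) ideals of the total ring of fractions, the isomorphism is given by multiplication by a unit $q$ of that total ring, i.e.\ $\mathcal D = q\C$.

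The key step is then purely formal: if $(a_1,\dots,a_d)$ is a minimal reduction of $\C$ realizing $\red(\C)=n$, meaning $\C^{n+1}=(a_1,\dots,a_d)\C^n$, then multiplying this equation through by $q^{n+1}$ gives $\mathcal D^{n+1}=(qa_1,\dots,qa_d)\mathcal D^n$. Each $qa_i$ lies in $q\C=\mathcal D$, so $(qa_1,\dots,qa_d)$ is a genuine ideal of $\RR$ contained in $\mathcal D$; the equation displays it as a reduction of $\mathcal D$ with reduction number $\le n$. Moreover $(qa_1,\dots,qa_d)$ is still generated by $d=\ell(\mathcal D)$ elements (multiplication by the unit $q$ does not change the analytic spread), so it is a \emph{minimal} reduction of $\mathcal D$, and therefore $\red(\mathcal D)\le n=\red(\C)$. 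The symmetric argument, using $q^{-1}$ and starting from a minimal reduction of $\mathcal D$, gives $\red(\C)\le\red(\mathcal D)$, hence equality. Here I am using the standing fact recalled before Proposition~\ref{cdeg}'s section (and \cite[Theorem~1.2]{Trung87} in dimension one, \cite[Theorem~2.3]{CP95}/Trung more generally) that, once the residue field is infinite, the reduction number relative to a minimal reduction does not depend on which minimal reduction is chosen, so the common value $\red(\C)$ is well defined.

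I do not expect a serious obstacle: the only point that needs a line of care is the claim that an $\RR$-module isomorphism between two fractional ideals inside a Gorenstein total ring of fractions is honestly multiplication by an invertible element of that total quotient ring, so that $qa_i$ again lands in $\mathcal D$ and, crucially, $(qa_1,\dots,qa_d)$ is an ideal of $\RR$ (not merely a fractional ideal). This is standard—$\Hom_\RR(\C,\mathcal D)$ localizes to $\Hom$ of rank-one reflexive modules over the punctured spectrum and at the generic points both become free of rank one—but it is worth stating explicitly. A secondary cosmetic point is to make sure the number of generators is preserved: $\mu(qa_1,\dots,qa_d)\le d=\ell(\C)=\ell(\mathcal D)\le\mu$ of any reduction, forcing minimality. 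With those remarks in place the proof is essentially the one-line computation above applied in both directions.
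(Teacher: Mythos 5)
Your proof is correct and follows essentially the same route as the paper: write $\mathcal D = q\C$ for a unit $q$ of the total ring of fractions and multiply the reduction equation $\C^{n+1}=J\C^n$ by $q^{n+1}$ to exhibit $qJ$ as a reduction of $\mathcal D$ with the same reduction number, then argue symmetrically. You supply slightly more detail than the paper by noting explicitly that $qJ$ is a \emph{minimal} reduction of $\mathcal D$ (since multiplication by $q$ preserves the number of generators and $\ell(\C)=\ell(\mathcal D)$), but the core computation is identical.
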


\begin{proof}
 Let $K$ be the total ring of quotients of $\RR$. Then there exists $q \in K$ such that $\mathcal{D}=q\C$. Let $r = \red(\C)$ and $J$ a minimal reduction of $\C$ with $\C^{r+1}=J \C^{r}$. Then
\[ \mathcal{D}^{r+1}= (q \C)^{r+1}= q^{r+1} (J \C^{r}) = (qJ)(q \C)^{r}= qJ \mathcal{D}^{r}\] so that $\red(\mathcal{D}) \leq \red(\C)$. Similarly, $\red(\C) \leq \red(\mathcal{D})$.
\end{proof}

\begin{Definition}{\rm
Let $(\RR, \m)$ be a Cohen-Macaulay local ring of dimension $d \geq 1$ with a canonical ideal $\C$. The {\em canonical index} of $\RR$ is the reduction number of the canonical ideal $\C$ of $\RR$ and is denoted by $\rho(\RR)$.
}\end{Definition}

\begin{Remark}\label{R}{\rm Suppose that $\RR$ is not Gorenstein.
The following are known facts.
\begin{enumerate}
\item[(1)] If the canonical ideal of $\RR$ is equimultiple, then $\rho(\RR) \neq 1$.

\item[(2)] If $\dim \RR=1$ and $\e_{0}(\m) =3$, then $\rho(\RR)=2$.

\item[(3)] If $\dim \RR=1$ and $\cdeg(\RR)=r(\RR) -1$, then $\rho(\RR)=2$.
  \end{enumerate}
}\end{Remark}

\begin{proof}
(1) Suppose that $\rho(\RR)=1$. Let $\C$ be a canonical ideal of $\RR$ with $\C^2 = a\C$. Then  $\C a^{-1} \subset \Hom(\C , \C) = \RR$ so that $\C = (a)$. This is a contradiction.

\medskip

\noindent (2) It follows from the fact that, if   $(\RR, \m)$ is a $1$--dimensional Cohen-Macaulay ring and $I$ an $\m$--primary ideal, then $\red(I) \leq \e_{0}(\m)-1$.

\medskip

\noindent (3) It follows from Proposition~\ref{1dimalmostg} and \cite[Theorem 3.16]{GMP11}.
\end{proof}


\subsubsection*{Sally module}   We examine briefly the Sally module associated to
the canonical ideal $\C$ in rings of dimension $1$. Let $Q=(a)$
be a minimal reduction of  $\C$ and consider the exact sequence of finitely generated $\RR[Q\TT]$-modules
\[ 0 \rar \C \RR[Q\TT] \lar \C \RR[\C\TT] \lar S_Q(\C) \rar 0. \]
Then the Sally module  $S=S_Q(\C) = \bigoplus_{n\geq 1} \C^{n+1}/\C Q^{n}$ of $\C$ relative to $Q$ is Cohen-Macaulay and, by
 \cite[Theorem 2.1]{red}, we have
 \[\e_1(\C) = \cdeg(\RR) + \sum_{j=1}^{\rho(\RR)-1} \lambda(\C^{j+1}/a\C^j) = \sum_{j=0}^{\rho(\RR) -1} \lambda(\C^{j+1}/a\C^j). \]

\begin{Remark} {\rm Let $\RR$ be a $1$-dimensional Cohen-Macaulay local ring with a canonical ideal $\C$. Then the multiplicity of the Sally module $s_0(S)=\e_1(\C)-\e_0(\C)+\lambda(\RR/\C)=\e_1(\C)-\cdeg(\RR)$ is an invariant of the ring $\RR$, by \cite[Corollary 2.8]{GMP11} and Proposition \ref{cdeg}.}
\end{Remark}

The following property of Cohen-Macaulay rings of type $2$ is a useful calculation that we will use to characterize rings with minimal canonical index.

\begin{Proposition}\label{C2}
Let $\RR$ be a $1$-dimensional Cohen-Macaulay local ring with a canonical ideal $\C$.  Let $(a)$ be a minimal reduction of $\C$. If $\nu(\C)=2$, then $\l(\C^{2}/a \C)=\l(\C/(a))$.
\end{Proposition}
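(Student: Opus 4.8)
The plan is to pin down both lengths in terms of a single auxiliary ideal $J$ extracted from the (very simple) syzygy of $\C$. Since $(a)$ is a reduction of the $\m$-primary ideal $\C$, Nakayama's lemma forces $a\notin\m\C$, so $a$ belongs to a minimal generating set of $\C$; as $\nu(\C)=2$ we may write $\C=(a,b)$. Set $J:=(a):_{\RR}b$. The easy half is immediate: $\C/(a)$ is cyclic, generated by the image of $b$, with annihilator $J$, so $\C/(a)\cong\RR/J$ and $\l(\C/(a))=\l(\RR/J)$. Note also that $\RR/J$ has finite length because $J\supseteq(a)$. It remains to prove $\l(\C^{2}/a\C)=\l(\RR/J)$.

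The heart of the matter is the identity $\C:_K J=a^{-1}\C^{2}$ in the total quotient ring $K$, and this is exactly where $\nu(\C)=2$ is used. Because $\C$ has just two generators, the presentation $\RR^{2}\xrightarrow{(a,b)}\C\to0$ has kernel the single ideal $J$: a syzygy $(x,y)$ satisfies $xa=-yb$, and since $a$ is a nonzerodivisor this means $x=-yb/a$ with $yb/a\in\RR$, i.e. $y\in J$; thus $0\to J\xrightarrow{\,y\mapsto(-yb/a,\,y)\,}\RR^{2}\to\C\to0$ is exact. Apply $\Hom_{\RR}(-,\C)$. Using $\Ext^{1}_{\RR}(\C,\C)=0$ (the canonical ideal is semidualizing), the restriction map $\Hom_{\RR}(\RR^{2},\C)=\C\oplus\C\to\Hom_{\RR}(J,\C)$ is surjective; identifying $\Hom_{\RR}(J,\C)=\C:_K J$ (legitimate since $J$ contains the nonzerodivisor $a$) and computing the map on a pair $(\phi_{1},\phi_{2})$ shows it sends $(\phi_{1},\phi_{2})$ to multiplication by $\phi_{2}-(b/a)\phi_{1}$. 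Hence $\C:_K J=\C+(b/a)\C=a^{-1}(a\C+b\C)=a^{-1}\C^{2}$.

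Granting this, multiplication by $a$ gives an isomorphism $a^{-1}\C^{2}/\C\xrightarrow{\ \sim\ }\C^{2}/a\C$, so $\l(\C^{2}/a\C)=\l\big((\C:_K J)/\C\big)$. Now apply $\Hom_{\RR}(-,\C)$ to $0\to J\to\RR\to\RR/J\to0$: since $\RR/J$ has finite length and $\C$ is torsion-free we get $\Hom_{\RR}(\RR/J,\C)=0$, while $\Ext^{1}_{\RR}(\RR,\C)=0$, so $(\C:_K J)/\C\cong\Ext^{1}_{\RR}(\RR/J,\C)$. Because $\RR$ is $1$-dimensional Cohen--Macaulay with canonical ideal $\C$ and $\RR/J$ has finite length, local duality identifies $\Ext^{1}_{\RR}(\RR/J,\C)$ with the Matlis dual of $\RR/J$, whence $\l\big(\Ext^{1}_{\RR}(\RR/J,\C)\big)=\l(\RR/J)$. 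Combining the three displays, $\l(\C^{2}/a\C)=\l(\RR/J)=\l(\C/(a))$, as wanted.

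The main obstacle is establishing $\C:_K J=a^{-1}\C^{2}$. The inclusion $\supseteq$ is an elementary computation ($\C^{2}J\subseteq a\C$, since $bJ\subseteq(a)$), but the reverse inclusion genuinely needs both hypotheses: the two-generator assumption makes the syzygy module a single ideal $J$, and the vanishing $\Ext^{1}_{\RR}(\C,\C)=0$ is what forces $\Hom_{\RR}(J,\C)$ to be no larger than $\C+(b/a)\C$. One can alternatively derive $\C:_K J=a^{-1}\C^{2}$ from the reflexivity $\C:(\C:I)=I$ of the canonical ideal applied to the easy inclusion, but that route needs slightly more length bookkeeping at the end.
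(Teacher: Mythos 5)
Your proof is correct, but it takes a genuinely different route from the paper's. The paper presents $\C=(a,b)$, tensors the syzygy sequence with $\RR/\C$, and shows directly that the resulting kernel is the Koszul homology $\H_1(\C)=((a):b)/(a)$; the single homological input there is $\Hom_{\RR}(\C,\C)=\RR$, used to prove $Z\cap\C\RR^2\subseteq B$ (vanishing of the ``$\delta$-invariant''). It then computes $\l(\H_1(\C))=\l(\RR/\C)$ from the four-term Koszul sequence, deduces $\l(\C/\C^2)=\l(\RR/\C)$, and finishes by length arithmetic. Your argument instead leans on two different standard facts about the canonical module: $\Ext^1_{\RR}(\C,\C)=0$ (to pull the identity $\C:_K J=a^{-1}\C^2$ out of the two-generator syzygy $0\to J\to\RR^2\to\C\to 0$) and Matlis/local duality for finite-length modules (to convert $\l\bigl((\C:_K J)/\C\bigr)=\l\bigl(\Ext^1_{\RR}(\RR/J,\C)\bigr)$ into $\l(\RR/J)$). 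Both are clean; the paper's version is more elementary in the sense that it only needs the endomorphism-ring fact and explicit Euler-characteristic bookkeeping, whereas yours is more conceptual and isolates the attractive intermediate identity $(a\C):_K J=\C^2$. One small stylistic point: the $\Ext^1$-vanishing and the duality statement are genuinely extra inputs beyond $\Hom(\C,\C)=\RR$; in a write-up you should cite them (e.g.\ \cite[Theorem~3.3.10]{BH}) rather than appeal to ``semidualizing,'' since the reader may not take that terminology to include the higher Ext vanishing. Your remark at the end, that the inclusion $\C:_K J\subseteq a^{-1}\C^2$ could also be derived from the double-annihilator property $\C:(\C:I)=I$, is exactly the shape the paper's $\C:_K\C=\RR$ argument would take if pushed one step further, so the two proofs are more closely related at that point than they first appear.
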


\begin{proof}
Let $\C=(a, b)$ and consider the exact sequence
\[ 0 \rar Z \rar \RR^{2} \rar \C \rar 0,\]
where $Z=\{ (r,s) \in \RR^{2} \mid ra+sb =0 \}$.  By tensoring this exact sequence with $\RR/ \C$, we obtain
\[ Z/\C Z  \stackrel{g}{\rar} (\RR/\C)^{2} \stackrel{h}{\rar} \C/\C^{2} \rar 0.\]
Then we have
\[ \ker(h)= \mbox{Im}(g) \simeq (Z/ \C Z)/ ( (Z \cap \C \RR^{2})/ \C Z ) \simeq Z/(Z \cap \C \RR^{2}) \simeq  (Z/B)/ ( (Z \cap \C \RR^{2})/B ),\]
where $B= \{ (-bx, ax) \mid x \in \RR \}$.

\medskip

\noindent We claim that $Z \cap \C \RR^{2} \subset B$, i.e., $\delta(\C)= (Z \cap \C \RR^{2})/B =0$. Let $(r, s) \in Z \cap \C \RR^{2}$. Then
\[ ra + sb =0 \;\; \Rightarrow \;\;  \frac{s}{a} \cdot b = -r \in \C \; \mbox{and} \; \frac{s}{a} \cdot a = s \in \C.\]
Denote the total ring of fractions of $\RR$ by $K$. Since $\C$ is a canonical ideal, we have
\[ \frac{s}{a} \in \C :_{K} \C =\RR.\]
Therefore
\[ (r, s) = \left( -b \cdot \frac{s}{a}, \; a \cdot \frac{s}{a} \right) \in B.\]
Hence $\ker(h)  \simeq Z/B =\H_{1}(\C)$ and we obtain the following exact sequence
\[ 0 \rar \H_{1}(\C) \rar (\RR/ \C)^{2} \rar \C/ \C^{2} \rar 0.\]
Next we claim that $\l(\H_{1}(\C))= \l(\RR/ \C)$.  Note that $\H_{1}(\C) \simeq ((a):b)/(a)$ by  mapping $(r,s)+B$ with $ra+sb=0$ to $s+(a)$. Using the exact sequence
\[ 0 \rar ((a):b)/(a) \rar \RR/(a) \stackrel{\cdot b}{\rar}  \RR/(a) \rar  \RR/\C \rar 0,\]
 we get
 \[ \l(\RR/\C) =  \l( ((a):b)/(a) ) = \l(\H_{1}(\C)).\]
Now, using the exact sequence
\[ 0 \rar \H_{1}(\C) \rar (\RR/\C)^{2} \rar \C/\C^{2} \rar 0, \]
we get
\[ \l(\C/\C^{2}) = 2 \l(\RR/\C) - \l(\H_{1}(\C)) = \l(\RR/\C).\]
Hence,
\[ \l(\C^{2}/a \C) = \l(\C/a \C) - \l(\C/ \C^{2}) = \l(\C/a \C) - \l(\RR/ \C) = \l(\C/a \C) - \l((a)/a \C) = \l(\C/(a)).\]
\end{proof}

\begin{Theorem}\label{SallyofC} Let $ (\RR, \m)$ be a $1$-dimensional Cohen-Macaulay local ring with a canonical ideal $\C$. Suppose that the type of $\RR$ is $2$.
Then we have the following.
\begin{itemize}
\item[{\rm (1)}] ${\ds \e_{1}(\C) \leq \rho(\RR) \, \cdeg(\RR)}$.
\item[{\rm (2)}] $\rho(\RR)=2$ if and only if ${\ds \e_{1}(\C) = 2 \, \cdeg(\RR)}$.
\end{itemize}
\end{Theorem}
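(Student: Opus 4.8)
The plan is to run the argument entirely through the Sally‑module formula recorded just above,
\[ \e_1(\C) \;=\; \sum_{j=0}^{\rho(\RR)-1} \lambda(\C^{j+1}/a\C^j), \]
where $(a)$ is a minimal reduction of $\C$ and, by Theorem \ref{gencdeg1}, the $j=0$ term is $\lambda(\C/(a)) = \cdeg(\RR)$. Since $\RR$ has type $2$ we may choose the minimal reduction so that $a\in\C\setminus\m\C$ and write $\C=(a,b)$.

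The first step is to show the summands are non‑increasing in $j$. For $j\ge 1$, multiplication by $b$ carries $\C^j$ into $\C^{j+1}$ and sends $a\C^{j-1}$ into $a\C^j$, hence induces an $\RR$-linear map $\C^j/a\C^{j-1}\to\C^{j+1}/a\C^j$; because $\C=(a,b)$ we have $\C^{j+1}=a\C^j+b\C^j$, so this map is onto and therefore $\lambda(\C^{j+1}/a\C^j)\le\lambda(\C^j/a\C^{j-1})$. Consequently each of the $\rho(\RR)$ terms in the formula is at most the $j=0$ term $\cdeg(\RR)$, and summing gives (1).

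For (2), the implication $\rho(\RR)=2\Rightarrow\e_1(\C)=2\cdeg(\RR)$ is immediate: the formula then has only the $j=0$ and $j=1$ summands, $\lambda(\C/(a))$ and $\lambda(\C^2/a\C)$, and Proposition \ref{C2} identifies both with $\cdeg(\RR)$. For the converse I would first note $\rho(\RR)\ge 2$: type $2$ forces $\RR$ to be non‑Gorenstein, so $\C\ne(a)$, whereas $\C^2=a\C$ would give $a^{-1}\C\subseteq\Hom(\C,\C)=\RR$, i.e. $\C=(a)$, a contradiction. Now suppose $\e_1(\C)=2\cdeg(\RR)$. By Proposition \ref{C2} the $j=0$ and $j=1$ terms already sum to $2\cdeg(\RR)$, so $\sum_{j=2}^{\rho(\RR)-1}\lambda(\C^{j+1}/a\C^j)=0$; if $\rho(\RR)\ge 3$ this forces $\C^3=a\C^2$, hence $\red(\C)\le 2$, contradicting $\rho(\RR)=\red(\C)$. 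Therefore $\rho(\RR)=2$.

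The only delicate point is the monotonicity step, and that is precisely where the hypothesis $\nu(\C)=2$ enters essentially: it is what makes $\C^{j+1}$ generated modulo $a\C^j$ by the single block $b\C^j$, so that the comparison map is surjective; for larger type the analogous map need not be onto and the terms need not decrease. Everything else is bookkeeping with the quoted Sally‑module formula, Theorem \ref{gencdeg1}, and Proposition \ref{C2}, and I do not anticipate any hidden obstacle.
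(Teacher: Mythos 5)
Your argument is correct, and part (1) takes a route that is genuinely different from the paper's. The paper observes that each $\C^{j+1}/a\C^{j}$ is a cyclic module annihilated by $L=\ann(\C/(a))=(a):b$, hence a quotient of $\RR/L$, and then identifies $\lambda(\RR/L)$ with $\cdeg(\RR)$ via the standard length identity $\lambda(\RR/((a):b))=\lambda(\C/(a))$. You instead exhibit, for $j\ge 1$, the surjection $\C^{j}/a\C^{j-1}\twoheadrightarrow\C^{j+1}/a\C^{j}$ induced by multiplication by $b$ (surjective because $\C^{j+1}=a\C^{j}+b\C^{j}$), and deduce the bound from the resulting monotonicity of the summands. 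Both arguments hinge on $\nu(\C)=2$ in the same place (it makes $\C^{j+1}$ a single generator over $a\C^{j}$), but yours is slightly more self-contained since it bypasses the colon-ideal length identity, and it gives the extra information that the Sally module's graded pieces have non-increasing length. For part (2) your reasoning matches the paper's almost exactly, with the minor difference that you spell out explicitly why $\rho(\RR)\ge 2$ and why vanishing of the tail forces $\rho(\RR)\le 2$, which the paper leaves implicit.

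One small remark: the phrase ``we may choose the minimal reduction so that $a\in\C\setminus\m\C$'' is harmless but unnecessary, since for a reduction $(a)$ of an ideal $\C$ with positive grade one automatically has $a\notin\m\C$ (otherwise $\C^{n+1}=a\C^{n}\subseteq\m\C^{n+1}$ would force $\C^{n+1}=0$ by Nakayama).
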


\begin{proof} Let $\C = (a, b)$, where $(a)$ is a minimal reduction of $\C$.

\medskip

\noindent (1) For each $j=0, \ldots, \rho(\RR) -1$,  the module $\C^{j+1}/a\C^{j}$ is cyclic and annihilated by $L=\ann(\C/(a))$ . Hence we obtain
\[\e_1(\C) = \sum_{j=0}^{\rho(\RR) -1} \lambda(\C^{j+1}/a\C^j)  \leq  \rho(\RR) \, \lambda(\RR/L)= \rho(\RR)  \, \cdeg(\RR).\]
(2) Note that $\rho(\RR)=2$ if and only if ${\ds \e_{1}(\C) = \sum_{j=0}^{1} \lambda(\C^{j+1}/a\C^j) }$.
Since $\nu(\C)= r(\RR)=2$, by Proposition~\ref{C2}, $\l(\C/(a) ) = \l(\C^{2}/a \C)$.  Thus, the assertion follows from
\[  \sum_{j=0}^{1} \lambda(\C^{j+1}/a\C^j) = 2 \, \l(C/(a)) = 2 \, \cdeg(\RR).\]
\end{proof}

\medskip

\begin{Example}{\rm Let $H=\left<a,b,c \right>$ be a numerical semigroup which is minimally generated by positive integers $a,b,c$ with gcd$(a,b,c)=1$. If the semigroup ring $\RR=k[[t^a,t^b,t^c]]$ is not a Gorenstein ring, then $r(\RR)=2$ (see \cite[Section 4]{GMP11}).}
\end{Example}

\begin{Example}\label{Ex1linkage}{\rm Let $A=k[X, Y, Z]$, let
$I=(X^2-YZ, Y^2-XZ, Z^2-XY)$ and $\RR=A/I$. Let $x, y, z$ be the images of $X, Y, Z$ in $\RR$. By \cite[Theorem 10.6.5]{ABAbook}, we see that $\C=(x, z)$ is a canonical ideal of $\RR$ with a minimal reduction $(x)$. It is easy to see that $\rho(\RR)=2$, $\e_1(\C)=2$ and $\cdeg(\RR)=1$.}
\end{Example}

\begin{Example}{\rm Let $A=k[X,Y,Z]$, let $I=(X^4-Y^{2}Z^{2}, Y^{4}-X^{2}Z^{2}, Z^{4}-X^{2}Y^{2})$ and $\RR=A/I$. Let $x, y, z$ be the images of $X, Y, Z$ in $\RR$. Then $\C=(x^{2}, z^{2})$ is a canonical ideal of $\RR$ with a minimal reduction $(x^{2})$. We have that $\rho(\RR)=2$, $\e_1(\C)=16$ and $\cdeg(\RR)=8$.}
\end{Example}

\medskip

\subsubsection*{Lower and upper bounds for the canonical index}

\begin{Example} {\rm Let $e \ge 4$ be an integer and let $H = \left<e, \{e+i\}_{3 \le i \le e-1}, 3e+1, 3e+2\right>$.
Let $k$ be a field and $V = k[[t]]$  the formal power series ring over $k$. Consider the semigroup ring $\RR = k[[H]] \subseteq V$.
\begin{enumerate}
\item[(1)] The conductor of $H$ is  $c= 2e+3$.
\item[(2)] The canonical module is $K_{\RR} = \left<1, t\right>$ and $K_{\RR}^{e-2} \subsetneq K_{\RR}^{e-1} = V$.
\item[(3)] The  canonical ideal of $\RR$ is $\C=(t^c K_{\RR})$ and $Q = (t^c)$ is a minimal reduction of $\C$. Moreover, $\rho(\RR)= \red(\C)= e-1$.
\item[(4)] The canonical degree is $\cdeg(\RR)= \lambda(\C/Q) = \lambda(K_{\RR}/\RR) = 3$.
\item[(5)] In particular, $\cdeg(\RR) \leq \rho(\RR)$.
\end{enumerate}
 }\end{Example}

 \begin{Proposition}
 Let $(\RR, \m)$ be a Cohen-Macaulay local ring with infinite residue field and a canonical ideal $\C$. Suppose that $\RR_\p$ is a Gorenstein ring for $\forall \p \in \Spec (\RR) \setminus \{\m\}$ and that $\C$ is equimultiple.
\begin{enumerate}
\item[{\rm (1)}] $\C^n$ has finite local cohomology for all $n > 0$.

\item[{\rm (2)}] Let $\Bbb I (\C^n)$ denote the Buchsbaum invariant of $\C^n$. Then the nonnegative integer
$ \beta(\RR) =  \sup_{n > 0}\Bbb I (\C^n) $ is independent of the choice of $\C$.

\item[{\rm (3)}] $\rho(\RR) \leq \deg(\RR) + \beta(\RR) -1$.
\end{enumerate}
 \end{Proposition}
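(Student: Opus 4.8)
The plan is to prove the three parts in order, since each feeds the next. For part (1), recall that by hypothesis $\RR$ has an isolated singularity, so $\C_\p$ is free (hence $\C^n_\p$ is free) for every prime $\p \neq \m$; equivalently $\C^n$ is locally free on the punctured spectrum. Therefore $\H^i_\m(\C^n)$ is annihilated by a power of $\m$ for every $i < \dim \RR$, i.e. $\C^n$ has finite local cohomology (is an FLC-module / generalized Cohen-Macaulay module) for all $n > 0$. I would also note that $\C^n$ has positive depth (it is a torsion-free, in fact maximal Cohen-Macaulay is too strong, but positive depth suffices) so only the top-degree local cohomology and the intermediate ones matter; in dimension $1$ this is immediate and in higher dimension one invokes that an ideal of a Cohen-Macaulay ring that is locally free in codimension $\le \dim-1$ is generalized Cohen-Macaulay.

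For part (2), the Buchsbaum invariant $\Bbb I(M) = \sup_{\q} \big(\lambda(M/\q M) - \e_0(\q, M)\big)$, the supremum over parameter ideals $\q$, is finite exactly because $M$ has finite local cohomology, and it equals $\sum_{i=0}^{\dim M - 1}\binom{\dim M - 1}{i}\lambda(\H^i_\m(M))$ for $\q$ a standard system of parameters, so it is intrinsic to the module $M$. Now I invoke Proposition~\ref{Ca1}-style reasoning: if $\mathcal D = q\C$ is another canonical ideal with $q \in K$, then $\mathcal D^n = q^n \C^n$ is isomorphic to $\C^n$ as an $\RR$-module, hence $\Bbb I(\mathcal D^n) = \Bbb I(\C^n)$. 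Taking the supremum over $n$ shows $\beta(\RR)$ does not depend on the chosen canonical ideal; one should also check, as in Proposition~\ref{cdeg}, that passing to $\RR(x)$ to secure an infinite residue field does not change $\beta$, which is routine since local cohomology and lengths behave well under this faithfully flat base change.

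For part (3), set $Q = (a)$, a minimal reduction of the equimultiple ideal $\C$, and let $r = \rho(\RR) = \red_Q(\C)$. The idea is to bound $r$ by controlling the Sally module or, more directly, the Hilbert function of $\C$. I would argue along the lines that for a generalized Cohen-Macaulay (FLC) ideal with principal reduction, the reduction number is bounded by $\e_0(\C) + \Bbb I - 1$ where $\e_0(\C) = \deg(\RR)$ since $\C$ is equimultiple of height (well, analytic spread) such that $\deg(\RR) = \e_0(\m)$... more precisely, $\e_0(Q,\RR) = \lambda(\RR/Q) \le \deg(\RR)$ is not quite it either; rather I use that $\C^{j+1} = Q\C^j$ once $\lambda(\C^j/Q\C^{j-1})$ stabilizes, and the total drop $\sum_{j\ge 1}\lambda(\C^{j+1}/Q\C^j)$ over the "pre-stabilization" range is controlled by $\e_1(\C)$, which in turn is bounded via the Sally module together with the Buchsbaum invariant by $\deg(\RR) + \beta(\RR) - 1$ at each step. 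The honest approach: reduce to dimension one by a generic hyperplane section — since $\C$ is equimultiple and the residue field is infinite, a general element $x \in \m$ gives $\bar\RR = \RR/(x)$ with $\bar\C$ still the (a) canonical-type ideal, $\red(\bar\C) \ge \red(\C)$, and $\Bbb I$ transforms predictably — and then apply a one-dimensional bound of the shape $\red(\C) \le \e_0(\C) + \lambda(\H^0_\m(\cdot)) - 1 = \deg(\RR) + \beta(\RR) - 1$, citing the relevant result on reduction numbers of ideals with finite local cohomology (of Goto–Ozeki / Rossi–Valla type). The main obstacle I anticipate is precisely this last step: making the hyperplane-section descent rigorous (that $\bar\C$ remains canonical-like, that the Buchsbaum invariant of $\bar\C^n$ is controlled by $\beta(\RR)$, and that the reduction number does not drop) and pinning down exactly which one-dimensional reduction-number bound yields the constant $\deg(\RR)+\beta(\RR)-1$ rather than something weaker; everything else is bookkeeping with localizations and the isomorphism $\mathcal D^n \simeq \C^n$.
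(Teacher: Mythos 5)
Parts (1) and (2) are essentially the paper's argument. For (1), the paper observes $\C^n\RR_\p = a^n\RR_\p$ directly (a principal ideal is its own only reduction in a CM ring, and $\C_\p$ is principal since $\RR_\p$ is Gorenstein), which amounts to your local-freeness observation; either works. For (2) you correctly use $\mathcal D^n = q^n\C^n \cong \C^n$, but you do not address why $\sup_n \Bbb I(\C^n)$ is actually a finite integer -- the paper notes that since $\C^{n+1} = a\C^n$ for $n \gg 0$, the modules $\C^n$ eventually stabilize up to isomorphism, so the sup is a max. That gap in your write-up is minor and easily filled.

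Part (3) is where your proposal genuinely falls short. Your hyperplane-section route has a direction error: from $\C^{r+1} = a\C^r$ one immediately gets $\bar\C^{r+1} = \bar a\,\bar\C^r$ in $\RR/(x)$, so $\red(\bar\C) \leq \red(\C)$, not $\geq$ as you claim. Thus reducing to dimension one does not give an upper bound on $\rho(\RR)$, and the ``one-dimensional bound of the right shape'' you hope to cite would not come back up to dimension $d$. The paper's argument is much shorter and avoids the descent entirely: it bounds the number of generators of every power uniformly and then invokes Eakin--Sathaye. Concretely, with $\q$ a minimal reduction of $\m$, the Buchsbaum-type inequality for the FLC module $\C^n$ gives $\l(\C^n/\q\C^n) \leq \e_0(\q,\C^n) + \Bbb I(\C^n)$; since $\C^n$ has rank one, $\e_0(\q,\C^n) = \e_0(\q,\RR) = \deg(\RR)$; and $\m\C^n \supseteq \q\C^n$ gives
\[
\nu(\C^n) = \l(\C^n/\m\C^n) \leq \l(\C^n/\q\C^n) \leq \deg(\RR) + \beta(\RR)
\]
for all $n > 0$. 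Because $\ell(\C)=1$, Eakin--Sathaye \cite[Theorem 1]{ES} says that if $\nu(\C^n) < n+1$ then some principal reduction $(a)$ satisfies $\C^n = a\C^{n-1}$; taking $n = \deg(\RR)+\beta(\RR)$ gives $\rho(\RR) \leq \deg(\RR)+\beta(\RR)-1$. Your proposal does not contain this key idea (uniformly bounding $\nu(\C^n)$ and applying Eakin--Sathaye), and the Sally-module/$\e_1$ detour you sketch is not developed to a proof.
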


 \begin{proof}
 (1) The assertion follows from $\C^n\RR_\p = a^n\RR_\p$ where $Q = (a)$ is a reduction of $\C$.

 \medskip

 \noindent (2)  We have $\C^n \cong \mathcal{D}^n$ for any canonical ideal $\mathcal{D}$ and $\C^{n+1} = a \C^n$ for $\forall n \gg 0$.

 \medskip

 \noindent (3) Let $\q$ be a minimal reduction of $\m$. Then, for $\forall n > 0$, we have
\[ \nu(\C^n) = \l(\C^n/\m \C^n) \leq \l(\C^n/\q \C^n) \leq \e_0(\q,\C^n) + \Bbb I (\C^n) \leq  \deg(\RR) + \beta(\RR). \]
The conclusion follows from \cite[Theorem 1]{ES}.
\end{proof}

\begin{Example}{\rm  Consider the following examples of $1$-dimensional Cohen-Macaulay semigroup rings $\RR$ with a canonical ideal $\C$ such that $\cdeg(\RR) =r(\RR)$.
\begin{enumerate}
\item[(1)] Let $a\geq 4$ be an integer and let $H= \left<a, a+3, \ldots, 2a-1, 2a+1, 2a+2 \right>$.  Let $\RR=k[[H]]$. Then the canonical module of $\RR$ is
$ K_{\RR} = \left< 1, t, t^3, t^4, \ldots, t^{a-1} \right>.$

\noindent The ideal $Q=(t^{a+3})$  is a minimal reduction of $\C= (t^{a+3}, t^{a+4}, t^{a+6}, t^{a+7}, \ldots, t^{2a+2} )$,
\[ \cdeg(\RR)=a-1=\nu(\C), \quad \mbox{and} \quad \red (\C)=2.\]

\item[(2)]  Let $a\geq 5$ be an integer and let $H= \left<a, a+1, a+4, \ldots, 2a-1, 2a+2, 2a+3 \right>$. Let $\RR=k[[H]]$. Then the canonical module of $\RR$ is
 $K_{\RR} = \left< 1, t, t^4, t^5, \ldots, t^{a-1} \right>$.

\noindent The ideal $Q=(t^{a+4})$ is a minimal reduction of
$\C= (t^{a+4}, t^{a+5}, t^{a+8}, t^{a+9}, \ldots, t^{2a+3})$.
\[ \cdeg(\RR)=a-2=\nu(\C), \quad \mbox{and} \quad \red(\C)=3.\]


\end{enumerate}
}\end{Example}


\section{Roots of canonical ideals}
\noindent
 Another phenomenon concerning canonical ideals  is the
 following that may impact the value of $\cdeg(\RR)$.

\begin{Definition}{\rm
Let $(\RR,\m)$ be a Cohen-Macaulay local ring of dimension $d \geq 1$ with a canonical ideal $\C$.
An ideal $L$ is called a {\em root} of $\C$ if $L^{n} \simeq \C$ for some $n$.
In this case, we write $\tau_{L}(\C) = \min \{ n \mid L^{n} \simeq \C \}$.
Then the {\em rootset} of $\RR$ is the set  $\mathrm{root}(\RR) = \{ \tau_{L}(\C) \mid L \; \mbox{is a root of} \; \C \} $.
} \end{Definition}

 The terminology  `roots' of $\C$ already appears in \cite{BV1}.
  Here is  a simple example.

 \begin{Example}{\rm (\cite[Example 3.4]{BV1})} \label{ex1root}
 {\rm Let $\RR=k[[t^4, t^5, t^6, t^7]]$. Then $\C= (t^4, t^5, t^6)$ is a canonical ideal of $\RR$.
 Let $I = (t^4, t^5)$. Then  $I^2 = t^4\C$, that is, $I$ is a square root of $\C$.
 }\end{Example}

 The set  $\mathrm{root}(\RR)$ is clearly independent of the chosen canonical ideal.
 To make clear the character of this set we appeal to the following property. We use the terminology
 of \cite{BV1} calling an ideal $L$ of $\RR$ {\em closed} if $\Hom(L,L)=\RR$.

   \begin{Proposition}\label{root0}
Let $(\RR,\m)$ be a $1$-dimensional Cohen-Macaulay local ring with a canonical ideal $\C$.
Let $L$ be a root of $\C$.
 If $\Hom(L^{n}, L^{n}) \simeq \RR$ for infinitely many values of $n$ then
 all powers of $L$ are closed. In this case $L$ is invertible and $\RR $ is Gorenstein.
%
 \end{Proposition}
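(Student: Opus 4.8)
The plan is to reduce first to the case of an infinite residue field and then to exploit the increasing chain of endomorphism rings of the powers of $L$. Passing to $\RR(x) = \RR[x]_{\m \RR[x]}$ as in the proof of Proposition~\ref{cdeg}, one has that $\C \RR(x)$ is a canonical ideal of $\RR(x)$, that $L\RR(x)$ is a root of it, and that $\Hom_{\RR(x)}(L^n\RR(x), L^n\RR(x)) \cong \Hom_\RR(L^n, L^n)\otimes_\RR\RR(x)$; since the hypothesis as well as the three conclusions (``all powers of $L$ are closed'', ``$L$ is invertible'', ``$\RR$ is Gorenstein'') transfer between $\RR$ and $\RR(x)$ by faithfully flat ascent and descent, we may assume the residue field of $\RR$ is infinite. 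Write $K$ for the total ring of quotients and $\tau = \tau_L(\C)$. A canonical ideal contains a nonzerodivisor, and the image of one under an isomorphism $\C \cong L^\tau$ has zero annihilator and lies in $L^\tau \subseteq L$; hence $L$ contains a nonzerodivisor, so $L$ is $\m$-primary (as $\dim\RR = 1$) and is a regular fractional ideal, and $\Hom_\RR(L^n,L^n)$ may be identified with the subring $L^n :_K L^n$ of $K$ containing $\RR$.

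The first step is to show that all powers of $L$ are closed. The key is monotonicity: if $x \in K$ satisfies $xL^m \subseteq L^m$, then $xL^{m+1} = (xL^m)L \subseteq L^{m+1}$, so
\[ \RR \subseteq L^1 :_K L^1 \subseteq L^2 :_K L^2 \subseteq \cdots \]
is an increasing chain of subrings of $K$. If $L^{n_0} :_K L^{n_0}$ strictly contained $\RR$ for some $n_0$, the same would hold for every $n \ge n_0$, so only finitely many $n$ would have $\Hom_\RR(L^n, L^n) \cong \RR$, contradicting the hypothesis. (An isomorphism $\Hom_\RR(L^n,L^n)\cong\RR$ of $\RR$-modules does force $L^n :_K L^n = \RR$: this ring is a module-finite, hence cyclic, extension $\RR s$ of $\RR$ inside $K$, and $1 \in \RR s$ makes $s$ a unit of $K$, so $s^2 \in \RR s$ yields $s \in \RR$.) Therefore $L^n :_K L^n = \RR$ for every $n \ge 1$.

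The second step produces invertibility. Since $L$ is $\m$-primary and the residue field is infinite, $L$ has a principal minimal reduction $(a)$, with $a$ a nonzerodivisor in $L$ and $L^{n+1} = aL^n$ for all $n \gg 0$. For such $n$ the fractional ideal $T := a^{-n}L^n$ does not depend on $n$, because $a^{-(n+1)}L^{n+1} = a^{-(n+1)}(aL^n) = a^{-n}L^n$. Scaling by the unit $a^n$ of $K$ leaves endomorphism rings unchanged, so $T :_K T = L^n :_K L^n = \RR$ by the first step. On the other hand $L\cdot T = a^{-n}L^{n+1} = a^{-n}(aL^n) = aT$, so $(a^{-1}L)T \subseteq T$, which forces $a^{-1}L \subseteq T :_K T = \RR$, i.e.\ $L \subseteq (a)$. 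Since also $(a) \subseteq L$, we conclude $L = (a)$, so $L$ is invertible. Finally $\C \cong L^\tau = (a^\tau) \cong \RR$, so the canonical module of $\RR$ is free of rank one, i.e.\ $\RR$ is Gorenstein; descending along $\RR \to \RR(x)$ finishes the proof.

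I expect the heart of the argument to be the maneuver in the last step: recognizing that the stable value $T$ of $a^{-n}L^n$ simultaneously satisfies $T :_K T = \RR$ (from ``all powers closed'') and $LT = aT$ (from the reduction equation $L^{n+1} = aL^n$), and that these two facts together trap $L$ inside the principal ideal $(a)$. The reduction to an infinite residue field, the increasing-chain argument, and the implication ``$L$ invertible $\Rightarrow \RR$ Gorenstein'' are all routine.
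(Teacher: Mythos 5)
Your proof is correct and follows essentially the same route as the paper's: first use the monotone chain $\RR \subseteq L^n :_K L^n \subseteq L^{n+1} :_K L^{n+1}$ to deduce from the hypothesis that every power of $L$ is closed, then combine a reduction equation $L^{n+1} = aL^n$ with closedness to trap $L$ in a principal ideal. The only difference in detail is that you work with the stable fractional ideal $T = a^{-n}L^n$ to conclude $L \subseteq (a)$ directly, whereas the paper uses $L^{2s} = x^s L^s$ to obtain $L^s = (x^s)$ and then passes to $\C \cong L^\tau$; your version makes the final descent to ``$L$ principal'' slightly more explicit, and your parenthetical justification that $\Hom(L^n,L^n)\cong\RR$ forces $L^n:_K L^n=\RR$ fills in a point the paper treats as immediate.
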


\begin{proof}
 A property of roots is that they are closed ideals. More generally, it is clear that
  if $\Hom(L^m, L^m) = \RR$ then $\Hom(L^n, L^n)=\RR$ for $n<m$, which shows the first assertion.

\medskip
%
 We may assume that $\RR$ has an infinite residue field.
Let $s=\red(L)$. Then $L^{s+1} = xL^s$, for a minimal reduction $(x)$ and thus
$ L^{2s} = x^sL^s$, which gives that
\[ x^{-s}L^s \subset \Hom(L^s,L^s) \simeq \RR.\]
It follows that $L^s \subset (x^s) \subset L^s$, and thus $L^s = (x^s)$. Now taking  $t$ such that $L^t\simeq C$ shows that $C$ is principal.
\end{proof}

\begin{Corollary} \label{uniqueroot}
 If $L^m \simeq \C \simeq L^n$, for $m\neq n$, then $C$ is principal.
\end{Corollary}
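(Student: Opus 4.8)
The plan is to deduce everything from Proposition \ref{root0} by manufacturing infinitely many closed powers of $L$ out of the two hypothesized isomorphisms. So, without loss of generality, I would assume $m < n$ and put $d = n - m > 0$.

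First I would record that the isomorphism classes of the powers of $L$ are periodic from degree $m$ on. Since $L^m \simeq L^n$, we may write $L^n = qL^m$ for a unit $q$ of the total ring of quotients $K$ of $\RR$; multiplying this equality by $L^{jd}$ gives $L^{n+jd} = qL^{m+jd}$, i.e. $L^{m+(j+1)d} = qL^{m+jd}$, so $L^{m+jd} \simeq L^{m+(j+1)d}$ for every $j \geq 0$. An immediate induction then yields $L^{m+jd} \simeq L^m \simeq \C$ for all $j \geq 0$.

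Next, because $\C$ is a canonical ideal we have $\Hom(\C,\C) = \RR$ (this is the identity $\C :_K \C = \RR$ already exploited in the proof of Proposition \ref{C2} and of Proposition \ref{root0}), and hence $\Hom(L^{m+jd}, L^{m+jd}) \simeq \Hom(\C,\C) \simeq \RR$ for each of the infinitely many integers $m + jd$, $j \geq 0$. Proposition \ref{root0} now applies to $L$ and gives that $L$ is invertible and $\RR$ is Gorenstein; in particular $\C \simeq L^m$ is principal, which is the assertion.

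I do not expect any genuine obstacle here: the whole content is the observation that $L^m \simeq \C \simeq L^n$ with $m \neq n$ forces closedness of an entire arithmetic progression of powers of $L$, which is exactly the infinitude hypothesis needed to invoke Proposition \ref{root0}. The remaining steps are just the routine bookkeeping of composing the relevant isomorphisms and scalar multiplications in $K$.
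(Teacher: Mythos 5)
Your argument is correct and is essentially the paper's own: both proofs manufacture an infinite arithmetic progression of exponents $k$ with $L^k \simeq \C$ (hence $\Hom(L^k,L^k)\simeq\Hom(\C,\C)=\RR$) from the two given isomorphisms and then invoke Proposition~\ref{root0}. The only cosmetic difference is the bookkeeping — you multiply the relation $L^n=qL^m$ by $L^{jd}$, while the paper iterates $\C\simeq L^m = L^nL^{m-n}\simeq L^{2m-n}$ — and both yield the same conclusion.
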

\begin{proof}
 Suppose $m > n$. Then
\[ \C \simeq L^m = L^n L^{m-n} \simeq L^m L^{m-n} = L^{2m -n}.\] Iterating,
 $L$ is a root of $\C$ of arbitrarily high order.
 \end{proof}

\begin{Corollary}{\rm (\cite[Proposition 3.8]{BV1})}
Let $(\RR,\m)$ be a $1$-dimensional Cohen-Macaulay local ring with a canonical ideal $\C$.
If $\RR$ is not Gorenstein then no proper power of $\C$ is a canonical ideal.
 \end{Corollary}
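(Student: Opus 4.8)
The plan is to reduce the statement to an application of Corollary~\ref{uniqueroot}. Suppose, for contradiction, that $\RR$ is not Gorenstein but that $\C^{n}$ is a canonical ideal of $\RR$ for some $n\geq 2$. Since any two canonical ideals of $\RR$ are isomorphic as $\RR$-modules (by \cite[Theorem 3.3.4]{BH}, the fact already used in the proof of Proposition~\ref{cdeg}), we get $\C^{n}\simeq\C$. Thus, with $L=\C$, we have the two isomorphisms $L^{n}\simeq\C$ and $L^{1}=\C\simeq\C$ with $n\neq 1$.

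Now Corollary~\ref{uniqueroot} applies directly with this choice of $L$ and the exponents $n\neq 1$: it forces $\C$ to be principal. But a principal canonical ideal $\C=(a)$ (with $a$ a nonzerodivisor, which exists since a canonical ideal of a Cohen-Macaulay ring with Gorenstein total ring of fractions contains regular elements) gives $\RR\simeq(a)=\C\simeq K_{\RR}$, so $\RR$ is its own canonical module, i.e., $\RR$ is Gorenstein. This contradicts the hypothesis, and completes the proof. Alternatively, one can bypass Corollary~\ref{uniqueroot} and argue directly via Proposition~\ref{root0}: from $\C^{n}\simeq\C$ with $n\geq 2$ one iterates (as in the proof of Corollary~\ref{uniqueroot}) to see that $\C$ is a root of $\C$ of arbitrarily high order, and since $\Hom(\C^{m},\C^{m})=\RR$ holds for the relevant powers (the canonical ideal is closed), Proposition~\ref{root0} yields that $\C$ is invertible and $\RR$ is Gorenstein.

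There is essentially no hard step here: the only substantive inputs are the uniqueness of the canonical module up to isomorphism and the previously established Corollary~\ref{uniqueroot} (respectively Proposition~\ref{root0}). The one point deserving a line of care is the passage ``$\C^{n}$ is a canonical ideal $\Rightarrow$ $\C^{n}\simeq\C$'', which is exactly the isomorphism-uniqueness of canonical ideals, together with the routine observation that a principal canonical ideal characterizes the Gorenstein property.
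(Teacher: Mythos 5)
Your proof is correct and follows essentially the route the paper intends: since the paper gives no explicit proof (the result is attributed to \cite[Proposition 3.8]{BV1} and placed immediately after Corollary~\ref{uniqueroot}), it is clearly meant to be the observation that $\C^{n}\simeq\C$ by uniqueness of the canonical module, followed by Corollary~\ref{uniqueroot} applied with $L=\C$ and exponents $n\neq 1$. Your alternate unpacking via Proposition~\ref{root0} is also right, and in fact it makes explicit the last step that the proof of Corollary~\ref{uniqueroot} leaves implicit (after ``$L$ is a root of arbitrarily high order,'' one invokes Proposition~\ref{root0}, using that $\Hom(L^{k},L^{k})=\RR$ whenever $L^{k}$ is a canonical ideal, to conclude $\RR$ is Gorenstein).
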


 \begin{Proposition}\label{root1}
 Let $(\RR,\m)$ be a Cohen-Macaulay local ring of dimension $d \geq 1$ with infinite residue field and with an equimultiple canonical ideal $\C$.
 Let $L$ be a root of $\C$. Then  $\tau_{L}(\C)  \leq \min\{ r(\RR)-1, \red(L)\}$.
 \end{Proposition}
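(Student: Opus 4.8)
The plan is to prove the two bounds separately. If $\RR$ is Gorenstein then $\C\simeq\RR$, every root $L$ has $\tau_L(\C)=0$, and both bounds read $\geq0$; so assume $\RR$ is not Gorenstein and put $n=\tau_L(\C)\geq1$, with $L^n\simeq\C$. First, $L$ is equimultiple of height one: $L^n\simeq\C$ contains a nonzerodivisor and $L^n\subseteq L$, so $L$ contains one and has rank one; and since $L^n\simeq\C$ the Rees algebras $\RR[L^n\TT]$ and $\RR[\C\TT]$ agree up to a unit twist in the total quotient ring, whence $\ell(L)=\ell(L^n)=\ell(\C)=1$. As the residue field is infinite, fix a minimal reduction $(x)$ of $L$ with $x$ a nonzerodivisor; then $(x^n)$ is a minimal reduction of $L^n$, so $x^n\notin\m L^n$ (else $(L^n)^{m+1}=x^n(L^n)^m\subseteq\m(L^n)^{m+1}$ for $m\gg0$ would force $(L^n)^{m+1}=0$). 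Moreover, for $1\leq m\leq n$ one has $L^m\neq xL^{m-1}$: otherwise $(x)$ is a reduction of $L$ of reduction number $\leq m-1$, so $L^n=x^{\,n-m+1}L^{m-1}\simeq L^{m-1}\simeq\C$ with $m-1<n$, contradicting the minimality of $n$. Applied to an arbitrary minimal reduction of $L$, this last observation already yields $\tau_L(\C)\leq\red(L)$.

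For $\tau_L(\C)\leq r(\RR)-1$, recall $r(\RR)=\nu(\C)=\nu(L^n)=\dim_k\bigl(L^n/\m L^n\bigr)$, so it is enough to exhibit a strictly increasing chain of $n+1$ nonzero $k$-subspaces of $L^n/\m L^n$. Put $F_k=x^{\,n-k}L^k$ for $0\leq k\leq n$; since $x\in L$ one has $F_k\subseteq F_{k+1}\subseteq L^n$, with $F_0=(x^n)$ and $F_n=L^n$. Passing to images $\overline{F_k}=(F_k+\m L^n)/\m L^n$ in $L^n/\m L^n$ gives an increasing chain of subspaces with $\overline{F_0}\neq0$ (since $x^n\notin\m L^n$) and $\overline{F_n}=L^n/\m L^n$. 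The whole argument then rests on the strict inclusions $\overline{F_k}\subsetneq\overline{F_{k+1}}$ for $0\leq k\leq n-1$, that is (with $j=k+1$),
\[
x^{\,n-j}L^j\ \not\subseteq\ x^{\,n-j+1}L^{j-1}+\m L^n\qquad(1\leq j\leq n);
\]
granting these, the chain $\overline{F_0}\subsetneq\cdots\subsetneq\overline{F_n}=L^n/\m L^n$ has $n$ strict steps and a nonzero bottom term, forcing $r(\RR)\geq n+1$.

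Toward the displayed non-containment, if $x^{\,n-j}L^j\subseteq x^{\,n-j+1}L^{j-1}+\m L^n$, then dividing by $x^{\,n-j}$ in the total quotient ring gives $L^j\subseteq xL^{j-1}+\m W$ with $W=x^{-(n-j)}L^n$ a finitely generated $\RR$-module containing $xL^{j-1}$; one wants to force $L^j=xL^{j-1}$, contradicting the first paragraph. Equivalently, in the one-dimensional fiber cone $\gr_L(\RR)\otimes_{\RR}k=\bigoplus_{m\geq0}L^m/\m L^m$ (on which the image $\bar x$ of $x$ is a homogeneous system of parameters), one needs $\bar x$ to act injectively in degrees $<n$, for then
\[
\dim_k\bigl(L^n/\m L^n\bigr)=1+\sum_{m=1}^{n}\dim_k\!\Bigl(L^m\big/\bigl(xL^{m-1}+\m L^m\bigr)\Bigr)\ \geq\ 1+n,
\]
each summand being positive because $L^m\neq xL^{m-1}$ for $m\leq n$. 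The main obstacle is precisely this injectivity — i.e.\ controlling the filtration $\{x^{\,n-k}L^k\}_k$ of $\C$ \emph{modulo} $\m\C$, not merely showing the ideals $x^{\,n-k}L^k$ are pairwise distinct — equivalently, ruling out a socle of $\bar x$ on the fiber cone of $L$ in low degrees. I expect this to require an input special to roots of a canonical ideal (computations suggest a self-duality $(\C:L^{\,i})=L^{\,n-i}$ for such $L$), or a genericity argument making a sufficiently general minimal reduction $(x)$ of $L$ filter-regular on $\gr_L(\RR)\otimes k$; everything else above is routine bookkeeping with the equimultiple structure.
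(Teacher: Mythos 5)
Your argument cleanly handles the bound $\tau_L(\C)\leq\red(L)$: if $L^m=xL^{m-1}$ for some $m\leq n$, then $L^n\simeq L^{m-1}\simeq\C$ with $m-1<n$, contradicting minimality, and this observation applied to a minimal reduction of reduction number $\red(L)$ gives the bound. That part is correct and essentially the (unstated but implicit) argument in the paper.

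For the bound $\tau_L(\C)\leq r(\RR)-1$, however, you have a genuine gap, and you say so yourself. Your plan reduces the claim to showing that the chain $\overline{F_0}\subsetneq\cdots\subsetneq\overline{F_n}$ inside $L^n/\m L^n$ has strict steps, i.e.\ that $\bar x$ acts injectively in degrees $<n$ on the fiber cone $\bigoplus_m L^m/\m L^m$. This is not something you can get from the fact that $L^m\neq xL^{m-1}$ alone: that only tells you $F_{k}\subsetneq F_{k+1}$ as ideals, not that they stay distinct after passing modulo $\m L^n$. And there is in general no reason for a fixed minimal reduction to be filter-regular on the fiber cone, nor do you have the self-duality you speculate about. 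So the argument stalls exactly where you flag it.

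The paper closes this gap in one stroke by invoking the Eakin--Sathaye theorem: if $n=\tau_L(\C)\geq r(\RR)$, then $\nu(L^n)=\nu(\C)=r(\RR)<n+1=\binom{n+1}{1}$, and Eakin--Sathaye produces \emph{some} element $a\in L$ with $L^n=aL^{n-1}$, whence $L^{n-1}\simeq\C$ and $n$ was not minimal. Note the key difference: Eakin--Sathaye does not assert that a prechosen minimal reduction $(x)$ works; it asserts the \emph{existence} of a good $a$ once the generator count is small enough. Your fiber-cone chain is morally a hand-rolled version of the $r=1$ case of Eakin--Sathaye's dimension count, but completing it really does require the genericity/prime-avoidance machinery inside that theorem, which you would be reproving rather than citing. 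Replacing your filtration argument by the Eakin--Sathaye contrapositive, exactly as in the paper, fills the gap; the rest of your write-up is fine but largely superfluous once you do so.
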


\begin{proof} Suppose   $n = \tau_{L}(\C) \geq r(\RR)$. Then
\[  \nu(L^{n}) = \nu (\C) = r(\RR) < n+1 =  {{n+1}\choose{1}}.\]
By \cite[Theorem 1]{ES}, there exists a reduction $(a)$ of $L$ such that
$L^{n} = aL^{n-1}$. Thus, $L^{n-1} \simeq \C$, which contradicts the minimality of $\tau_{L}(\C)$.
\end{proof}

\begin{Remark}{\rm  
We have the following.

\begin{enumerate}
\item[(1)] If $r(\RR)=2$, then the isomorphism class of $\C$ is the only root.
\item[(2)] The upper bound in Proposition~\ref{root1} is sharp. For example, let $a \ge 3$ be an integer and we consider the numerical semigroup ring $\RR = k[[\{t^i\}_{a \le i \le 2a-1}]] \subseteq k[[t]]$. Then the canonical module of $\RR$ is
\[ K_{\RR} = \sum_{i=0}^{a-2} \RR t^i = (\RR + \RR t)^{a-2}.\] Thus $\RR$ has a canonical ideal $\C= t^{a(a-2)} K_{\RR}$.  Let $L = (t^a,t^{a+1})$. Then $\C = L^{a-2}$. Hence we have   $ \tau_{L}(\C) = a-2  = r (\RR) -1$.
\end{enumerate}
}\end{Remark}

From Proposition \ref{root1} we have the following.

\begin{Theorem}\label{root}
Let $(\RR,\m)$ be a $1$-dimensional Cohen-Macaulay local ring with a canonical ideal. If $\RR$ is not Gorenstein, then
$\mathrm{root}(\RR)$ is a finite set of cardinality less than $r(\RR)$.
\end{Theorem}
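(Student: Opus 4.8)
The plan is to deduce Theorem~\ref{root} directly from Proposition~\ref{root1} together with Corollary~\ref{uniqueroot}. Suppose $\RR$ is not Gorenstein and let $\C$ be a canonical ideal. First I would reduce to the case of an infinite residue field: passing from $\RR$ to $\RR(x) = \RR[x]_{\m\RR[x]}$ preserves the Cohen-Macaulay property, the canonical ideal, the type $r(\RR)$, and the property of being (non-)Gorenstein, and an isomorphism $L^n \simeq \C$ is stable under this flat base change; moreover a root of $\C\RR(x)$ descends appropriately, so $\mathrm{root}(\RR) = \mathrm{root}(\RR(x))$. In dimension one every $\m$-primary ideal is automatically equimultiple (its minimal reduction is principal, generated by a single parameter), so in particular $\C$ is equimultiple and the hypotheses of Proposition~\ref{root1} are met for any root $L$ of $\C$.

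Next, for any root $L$ of $\C$, Proposition~\ref{root1} gives $\tau_L(\C) \le r(\RR) - 1$, so every element of $\mathrm{root}(\RR)$ lies in $\{1, 2, \ldots, r(\RR)-1\}$; in particular the set is bounded above by $r(\RR)-1$ and hence its cardinality is less than $r(\RR)$. Strictly speaking this already proves the stated bound, since $\mathrm{root}(\RR) \subseteq \{1,\ldots,r(\RR)-1\}$ forces $|\mathrm{root}(\RR)| \le r(\RR)-1 < r(\RR)$. I would also remark, for context, that the finiteness is immediate from this containment, and that Corollary~\ref{uniqueroot} gives the additional structural fact that the function $L \mapsto \tau_L(\C)$ cannot take two distinct values on the \emph{same} ideal $L$ once $\RR$ is non-Gorenstein — so the only subtlety in computing $\mathrm{root}(\RR)$ is ranging over non-isomorphic $L$.

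There is really no hard step here: the theorem is a corollary of Proposition~\ref{root1}. The only point requiring a word of care is the descent of the infinite-residue-field reduction, i.e.\ checking that $\mathrm{root}(\RR)$ does not shrink or grow when we base change to $\RR(x)$; the cleanest way is to observe that $\Hom_{\RR}(M,N) \otimes_{\RR} \RR(x) \cong \Hom_{\RR(x)}(M\otimes \RR(x), N\otimes \RR(x))$ for finitely generated $M, N$, so isomorphism classes of (fractional) ideals, powers, and the canonical ideal are all detected faithfully over $\RR(x)$, and conversely a root over $\RR(x)$ can be taken to be extended from an $\RR$-ideal after clearing the finitely many elements of $k(x)$ involved. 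If one prefers to avoid this, an alternative is to invoke that the statement to be proved is purely numerical (a bound on a set of integers) and that each individual inequality $\tau_L(\C) \le r(\RR)-1$ was already established in Proposition~\ref{root1} under the infinite-residue-field hypothesis, which transfers. I would write the proof in three or four lines: reduce to infinite residue field, note equimultiplicity in dimension one, apply Proposition~\ref{root1} to conclude $\mathrm{root}(\RR) \subseteq \{1,\ldots,r(\RR)-1\}$, and read off finiteness and the cardinality bound.
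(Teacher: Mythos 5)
Your proof is correct and follows the same route the paper intends: in dimension one the canonical ideal is automatically equimultiple, so after passing to $\RR(x)$ to get an infinite residue field, Proposition~\ref{root1} bounds $\tau_L(\C)$ by $r(\RR)-1$ for every root $L$, forcing $\mathrm{root}(\RR)\subseteq\{1,\dots,r(\RR)-1\}$. The aside on Corollary~\ref{uniqueroot} is accurate but not needed, and for the base-change step you really only require the one-sided containment (an isomorphism over $\RR(x)$ descends because $\Hom_{\RR}(L^m,\C)$ is cyclic precisely when its extension is, and a generator that becomes an isomorphism over the faithfully flat $\RR(x)$ is already one over $\RR$).
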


\subsubsection*{Applications of roots}

\begin{Proposition}
Let $(\RR,\m)$ be a $1$-dimensional Cohen-Macaulay local ring with a canonical ideal $\C$.
Let $f$ be the supremum of the reduction numbers of the $\m$-primary ideals. Suppose that $L^{n} \simeq \C$.
If $p$ divides $n$, then $\rho(\RR) \leq (f+p-1)/p$.
\end{Proposition}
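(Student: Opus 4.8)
The plan is to combine the hypothesis $L^n \simeq \C$ with the divisibility $p \mid n$ to manufacture a $p$-th root of $\C$, and then feed it into the reduction-number bookkeeping. Write $n = pm$. Then $(L^m)^p = L^{pm} = L^n \simeq \C$, so $M := L^m$ is itself a root of $\C$ with $\tau_M(\C) \le p$. The point of this step is that it produces a root whose order is controlled by $p$ rather than by the potentially much larger $n$, which is exactly what we need if the final bound is to improve as $p$ grows.

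Next I would pass to $\RR(x) = \RR[x]_{\m\RR[x]}$ so that the residue field is infinite (this changes neither $\rho(\RR)$, nor the reduction numbers of $\m$-primary ideals, nor the isomorphism class of $\C$), let $s = \red(M)$ and pick a minimal reduction $(x)$ of $M$ with $M^{s+1} = xM^s$. Since $M$ is a root of $\C$, it is a closed ideal ($\Hom(M,M) = \RR$), and the argument in the proof of Proposition~\ref{root0} shows that if $s \ge 1$ then $M^s = (x^s)$ is principal, forcing $\C$ to be principal and $\RR$ Gorenstein; in the Gorenstein case $\rho(\RR) = 1$ and the inequality $\rho(\RR) \le (f+p-1)/p$ is immediate since $f \ge 1$ and $p \ge 1$. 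So we may assume $\RR$ is not Gorenstein, and then in fact $\red(M) \le f$ by the very definition of $f$ as the supremum of reduction numbers of $\m$-primary ideals.

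Now I would relate $\rho(\RR) = \red(\C)$ to $\red(M)$ using $\C \simeq M^p$. If $M^{s+1} = xM^s$ with $s = \red(M) \le f$, then for the ideal $M^p$ (isomorphic to a canonical ideal, hence with the same reduction number by Proposition~\ref{Ca1}) one checks that $(x^p)$ is a reduction: $(M^p)^{q+1} = M^{p(q+1)} = x^{?}(M^p)^q$ once $pq \ge s$, i.e. once $q \ge s/p$, so $\red(M^p) \le \lceil s/p \rceil \le \lceil f/p \rceil = (f + p - 1)/p$ when we write the ceiling in the stated form (using that $\lceil f/p\rceil = \lfloor (f+p-1)/p\rfloor$). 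Hence $\rho(\RR) = \red(\C) = \red(M^p) \le (f+p-1)/p$, as claimed.

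The step I expect to be the main obstacle is the last one: one must be careful that $\red(M^p)$ is computed relative to a genuine minimal reduction and that the exponent bookkeeping $M^{p(q+1)} = x^{pq - s}\cdot x^s M^{pq} \subseteq \ldots$ really yields $(M^p)^{q+1} = (x^p)(M^p)^q$ for all $q$ at or above the threshold, not merely eventual equality; the clean way is to observe that $M^{s} = x^{s}\cdot(\text{unit-coefficient part})$ only after one knows $M^j = x^{j-s}M^s$ for $j \ge s$, which is the standard consequence of $M^{s+1} = xM^s$. Once that identity is in hand the passage to $p$-th powers is formal, and combining $\red(M) \le f$ with $p \mid n$ gives the bound. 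No independence-of-$\C$ issue arises here because $\rho(\RR)$ was already shown to be an invariant in Proposition~\ref{Ca1}.
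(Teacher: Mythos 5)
Your core computation is essentially the paper's argument: factor $n = pm$, pass to $M = L^m$ with $M^p \simeq \C$, bound $s = \red(M) \le f$, and observe that $M^{s+1} = xM^s$ forces $(M^p)^{q+1} = (x^p)(M^p)^q$ for $q = \lceil s/p\rceil$, so $\rho(\RR) = \red(M^p) \le \lceil s/p\rceil \le (f+p-1)/p$. The paper writes this slightly differently---setting $r = \red(I)$, $r = ps + q$ with $-p+1 \le q \le 0$, and concluding $\C^{s+1} = x^p\C^s$---but it is the same bookkeeping, and your use of Proposition~\ref{Ca1} to transfer from $M^p$ to $\C$ is exactly the right move.

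However, the paragraph in which you reduce to the non-Gorenstein case is wrong, and you should delete it. You assert that, because $M$ is a root of $\C$ and hence closed, ``the argument in the proof of Proposition~\ref{root0} shows that if $s \ge 1$ then $M^s = (x^s)$ is principal, forcing $\C$ to be principal and $\RR$ Gorenstein.'' That argument needs $\Hom(M^s, M^s) = \RR$ for $s = \red(M)$, which is precisely why Proposition~\ref{root0} is stated with the hypothesis that $\Hom(L^n,L^n) \simeq \RR$ for \emph{infinitely many} $n$; being a root only gives $\Hom(M^j,M^j) = \RR$ for $j \le \tau_M(\C) \le p$, and $\red(M)$ may well exceed $p$. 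Taken at face value your claim would say that no non-Gorenstein ring admits a root of its canonical ideal, which contradicts, for instance, the example after Proposition~\ref{root1} where $\RR = k[[\{t^i\}_{a \le i \le 2a-1}]]$ has $L = (t^a,t^{a+1})$ with $L^{a-2} = \C$. (Also, in the Gorenstein case $\rho(\RR) = 0$, not $1$.) Fortunately none of this is needed: $M = L^m$ is an $\m$-primary ideal, so $\red(M) \le f$ holds unconditionally by the definition of $f$, and the rest of your argument never uses the non-Gorenstein assumption. Strike the detour and the proof is the paper's.

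One last small point: your closing worry about ``eventual equality'' versus equality at the threshold is not a real obstacle. Once $(M^p)^{q_0+1} = (x^p)(M^p)^{q_0}$ holds, multiplying by $(M^p)^{q-q_0}$ gives the relation for every $q \ge q_0$, and in dimension one with infinite residue field the principal ideal $(x^p)$ is automatically a \emph{minimal} reduction of the $\m$-primary ideal $M^p$. The identity you should invoke is simply $M^j = x^{\,j-s}M^s$ for $j \ge s$; there is no need for, nor truth to, a ``unit-coefficient part'' decomposition of $M^s$.
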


\begin{proof}
 Since $n=pm$, by replacing $L^m$ by $I$, we may assume that $I^p = \C$. Let $r=\red(I)$ with $I^{r+1} = xI^r$. Then $r=ps+q$ for some $q$ such that $-p+1 \leq q \leq 0$.
Since $ps=r-q \geq r$, we have
\[ \C^{s+1} = I^{ps+p} = x^{p} I^{ps} = x^{p} \C^{s}.\]
Thus, $ \rho(\RR)= \red(\C) \leq s = (r-q)/p \leq (r+p-1)/p \leq (f+p-1)/p$.
\end{proof}


\subsubsection*{A computation of roots of the canonical ideal}

Let $0 < a_1< a_2 < \cdots < a_q$ be integers such that $\operatorname{gcd}(a_1, a_2, \ldots, a_q) = 1$. Let $H = \left<a_1, a_2, \ldots, a_q\right>$ be the numerical semigroup generated by $a_i's$.
 Let $V = k[[t]]$ be the formal power series ring over a field $k$ and set $\RR = k[[t^{a_1},t^{a_2}, \ldots, t^{a_q}]]$. We denote by $\m$ the maximal ideal of $\RR$ and by $e = a_1$ the multiplicity of $\RR$. Let $v$ be the discrete valuation of $V$.
In what follows, let $\RR \subseteq L \subseteq V$ be a finitely generated $\RR$-submodule of $V$ such that $\nu(L)>1$. We set $\ell = \nu(L)-1$. Then we have the following.

\begin{Lemma}\label{lemma1}
With notation as above, $1 \not\in \m L$.
\end{Lemma}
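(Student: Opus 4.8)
The plan is to show that $1 \notin \mathfrak{m}L$ by exploiting the fact that $\RR \subseteq L$ together with the structure of $L$ as a submodule of the DVR $V = k[[t]]$. The key point is that every element of $\RR$ has $t$-adic valuation that is either $0$ or at least $e = a_1$, so $\mathfrak{m}$ consists of elements of valuation $\geq e$, while $L$ contains $1$, an element of valuation $0$.

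First I would set up the valuation bookkeeping. Write $v$ for the discrete valuation of $V$. Since $L$ is a finitely generated $\RR$-submodule of $V$ with $\RR \subseteq L$, we may normalize so that $v(L) := \min\{v(x) : 0 \neq x \in L\} = 0$; indeed $1 \in \RR \subseteq L$ forces this minimum to be $0$. Now suppose for contradiction that $1 \in \mathfrak{m}L$. Then we can write $1 = \sum_{i} m_i x_i$ with $m_i \in \mathfrak{m}$ and $x_i \in L$. Each $m_i \in \mathfrak{m}$ is a power series in $t^{a_1}, \ldots, t^{a_q}$ with no constant term, so $v(m_i) \geq a_1 = e \geq 1$. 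Each $x_i \in L \subseteq V$ has $v(x_i) \geq 0$. Therefore $v(m_i x_i) \geq e \geq 1$ for every $i$, and hence $v\bigl(\sum_i m_i x_i\bigr) \geq 1$ as well (the valuation of a sum is at least the minimum of the valuations of the summands). But $v(1) = 0$, a contradiction. Thus $1 \notin \mathfrak{m}L$.

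The only subtlety I would want to be careful about is the claim that every nonzero element of $\mathfrak{m}$ has valuation $\geq e$; this is immediate because $\RR = k[[t^{a_1}, \ldots, t^{a_q}]]$ is spanned over $k$ by monomials $t^h$ with $h \in H$, and $\mathfrak{m}$ is spanned by those with $h \in H \setminus \{0\}$, so $h \geq a_1 = e$. (Here we use that $H$ is a numerical semigroup whose smallest nonzero element is $a_1$.) There is no real obstacle: the statement is essentially a valuation-degree comparison, and the hypothesis $\nu(L) > 1$ is not even needed for this particular lemma — it is presumably recorded there for use in the subsequent results. The argument is short and self-contained once the valuation estimate on $\mathfrak{m}$ is noted.
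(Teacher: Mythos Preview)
Your proof is correct and is genuinely different from the paper's argument. You argue directly with the $t$-adic valuation: since $L \subseteq V$ every $x_i \in L$ has $v(x_i) \ge 0$, and every nonzero $m_i \in \m$ has $v(m_i) \ge a_1 \ge 1$, so any element of $\m L$ has strictly positive valuation and hence cannot equal $1$. This is a clean, elementary one-line argument, and you are right that the hypothesis $\nu(L) > 1$ plays no role in it.

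The paper instead proceeds via reduction theory: it chooses $0 \ne g \in \m$ with $gV \subsetneq \RR$ (a conductor element), so that $I = gL$ is an $\m$-primary ideal of $\RR$ with $Q = (g)$ a minimal reduction; then the standard fact that a generator of a minimal reduction lies outside $\m I$ gives $g \notin \m I = g(\m L)$, whence $1 \notin \m L$. This is less direct but stays within the ideal-theoretic language of the surrounding section, and it is an argument that would transplant to settings where one has a finite birational extension $\RR \subseteq \overline{\RR}$ without an explicit valuation at hand. Your valuation argument, by contrast, exploits the very concrete presentation $\RR \subseteq V = k[[t]]$ and is the shorter route in this specific numerical-semigroup setting.
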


\begin{proof}
Choose $0 \ne g \in \m$ so that $gV \subsetneq \RR$. Then $Q = g\RR$ is a minimal reduction of the $\m$-primary ideal $I = gL$ of $\RR$, so that $g \not\in \m I$. Hence $1 \notin\m L$.
\end{proof}

\begin{Lemma}\label{2}
With notation as above, there exist elements $f_1, f_2, \ldots, f_\ell \in L$ such that
\begin{enumerate}
\item[{\rm (1)}] $L = \RR + \sum_{i=1}^\ell \RR f_i$,
\item[{\rm (2)}] $0 < v(f_1) < v(f_2) < \ldots <v(f_\ell)$, and
\item[{\rm (3)}] $v(f_i)\not\in H$ for all $1 \le i \le \ell$.
\end{enumerate}
\end{Lemma}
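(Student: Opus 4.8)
The plan is to construct the elements $f_1, \ldots, f_\ell$ by a direct "peeling off" argument using the valuation $v$, exploiting the fact that $\RR \subseteq L \subseteq V$ and that $V$ is a discrete valuation ring. First I would observe that since $L$ is a finitely generated $\RR$-module with $\RR \subseteq L \subseteq V$, the value set $v(L) = \{ v(x) \mid 0 \ne x \in L\}$ is a numerical semigroup-like subset of $\mathbb{Z}_{\ge 0}$ containing $H$, and in fact $v(L)$ is a finite union of cosets modulo $v(tV\text{-related conductor})$; more simply, $v(L) \setminus H$ is a finite set because the conductor of $\RR$ in $V$ annihilates $V/\RR \supseteq L/\RR$, so $t^N V \subseteq \RR \subseteq L$ for $N \gg 0$, forcing $v(L) \supseteq \{N, N+1, \ldots\}$, hence $v(L) \setminus H$ is finite. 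Write $v(L) \setminus H = \{ n_1 < n_2 < \cdots < n_s \}$ and pick $f_i \in L$ with $v(f_i) = n_i$; these automatically satisfy (2) and (3).

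The main point is then to show $\ell = s$, i.e. that $\nu(L) - 1$ equals the number of values of $L$ not lying in $H$, and that $L = \RR + \sum_{i=1}^{\ell} \RR f_i$. For this I would argue that $L' := \RR + \sum_{i=1}^{s} \RR f_i \subseteq L$ already has value set $v(L') = v(L)$: indeed $v(L')$ contains $H$ (from $\RR$) and contains each $n_i$, and any element of $L$ has valuation in $H \cup \{n_1, \ldots, n_s\} \subseteq v(L')$. A standard argument for numerical-semigroup-type modules inside $V = k[[t]]$ shows that two $\RR$-submodules of $V$ with the same value set and one contained in the other are equal (given $x \in L$, subtract a suitable unit multiple of the generator of $L'$ realizing $v(x)$ to strictly increase the valuation, iterate, and use completeness/the conductor to terminate). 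Hence $L = \RR + \sum_{i=1}^{s}\RR f_i$. It remains to identify $s$ with $\ell = \nu(L) - 1$: by Nakayama, $\nu(L) = \dim_k L/\m L$, and by Lemma \ref{lemma1} the image of $1$ is part of a minimal generating set; I would show $\{1, f_1, \ldots, f_s\}$ maps to a $k$-basis of $L/\m L$ — spanning is clear from $L = \RR + \sum \RR f_i$, and linear independence follows because a nontrivial relation $c_0 \cdot 1 + \sum c_i f_i \in \m L$ with $c_i \in k$ would, after looking at the lowest valuation appearing, contradict either Lemma \ref{lemma1} (if $c_0 \ne 0$) or the fact that $v(f_i) \notin H = v(\m \cdot \RR) \cup \ldots$ and $v(\m L)$ hits no new values below... — more carefully, $\m L$ has value set $(v(L) + H_{>0})$-type shifted, which avoids the minimal $n_i$'s, so each $f_i$ is genuinely needed. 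Thus $\nu(L) = s + 1$, giving $s = \ell$.

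The step I expect to be the main obstacle is the clean verification that $\{1, f_1, \ldots, f_s\}$ is a \emph{minimal} generating set, i.e. pinning down exactly which values are "new" and cannot be produced by $\m L$; one has to be careful that $\m L$ can produce elements of valuation $n_i$ for $i$ large even though it cannot produce the smallest few, so the bookkeeping showing $\dim_k L/\m L = s+1$ rather than something smaller requires choosing the $f_i$ in a triangular fashion (each $f_i$ chosen so that $v(f_i)$ is minimal in $v(L) \setminus (H \cup \{v(f_1), \ldots, v(f_{i-1})\} \cup v(\m L'))$ or similar). An alternative, possibly cleaner, route is to induct on $\ell$: strip off the generator $f_\ell$ of highest valuation, apply the inductive hypothesis to a smaller module, and patch back in; but the direct valuation-counting argument sketched above is more transparent and is what I would write up, since the authors have already set up all the needed notation ($v$, $e$, $H$, $\ell$) and Lemma \ref{lemma1} is precisely the base case $i$-independent fact ($1 \notin \m L$) that makes the generating set minimal.
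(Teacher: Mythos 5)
Your proposal contains a genuine gap: the claimed identity $\ell = s := |v(L)\setminus H|$ is false. Take $H = \langle 3,5,7\rangle$, so $H = \{0,3,5,6,7,8,\dots\}$, and let $L = V = k[[t]]$. Then $v(L)\setminus H = \{1,2,4\}$, so $s=3$, whereas $\m L = t^3V$, hence $\nu(L) = \dim_k V/t^3V = 3$ and $\ell = 2$. Indeed $t^4 = t\cdot t^3 \in \m L$, so the generator $f_3 = t^4$ you would select is redundant; the lemma is satisfied here by $\{1,t,t^2\}$, with only two $f_i$'s. You anticipate this difficulty ("$\m L$ can produce elements of valuation $n_i$ for $i$ large") but treat it as a fixable bookkeeping matter; it is not. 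The correct count is $\nu(L) = |v(L)\setminus v(\m L)|$, and $v(\m L)$ is in general strictly larger than $H_{>0}$, so $s = |v(L)\setminus H|$ can strictly exceed $\ell$. Your "triangular" repair changes what $s$ counts, and once you do that the surrounding argument (same value set $\Rightarrow$ same module, hence generation) has to be rebuilt; as written the proof does not close.

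The paper avoids the counting question altogether and runs the argument in the opposite direction. It starts from an \emph{arbitrary} minimal generating set $L = \RR + \sum_{i=1}^\ell \RR f_i$ (which exists by Nakayama together with Lemma~\ref{lemma1}, since $1 \notin \m L$) and then normalizes it by two moves that preserve both the module and the cardinality $\ell$: if $v(f_i)\in H$, subtract from $f_i$ the finite sum of its monomials $c_jt^j$ with $j<s$, where $s$ is the smallest exponent with $c_s\neq 0$ and $s\notin H$; this element lies in $\RR$, so $\RR+\RR f_i$ is unchanged while the valuation jumps to $s\notin H$. If $v(f_i)=v(f_j)$ with $i<j$, replace $f_j$ by $f_j - cf_i$ for the appropriate scalar $c\in k^\times$, again preserving the generated module and strictly raising $v(f_j)$. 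Each move strictly increases $\sum_i v(f_i)$, and every $v(f_i)$ stays below the conductor of $H$ (an $f_i$ of valuation at least the conductor would lie in $\RR$ and be redundant, contradicting minimality), so the process terminates and yields generators satisfying (2) and (3). This is essentially the "alternative, possibly cleaner route" you mention and then set aside; it is the one that works, precisely because it never needs to know how $|v(L)\setminus H|$ relates to $\nu(L)$.
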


\begin{proof}
Let $L = \RR + \sum_{i=1}^\ell \RR f_i$ with $f_i \in L$. Let $1 \le i \le \ell$ and assume that $m= v(f_i) \in H$. We write $f_i = \sum_{j = m}^\infty c_jt^j$ with $c_j \in k$. Then $c_s \ne 0$ for some $s > m$ such that $s \not\in H$, because $f_i \not\in \RR$. Choose such integer $s$ as small as possible and set $h = f_i - \sum_{j=m}^{s-1}c_jt^j$. Then $\sum_{j=m}^{s-1}c_jt^j \in \RR$ and $\RR + \RR f_i = \RR + \RR h$. Consequently, as  $v(h) =s  > m = v(f_i)$, replacing $f_i$ with $h$, we may assume that $v(f_i) \not\in H$ for all $1 \le i \le \ell$. Let $1 \le i<j \le \ell$ and assume that $v(f_i) = v(f_j)=m$. Then, since $f_j = cf_i + h$ for some $0 \ne c \in k$ and $h \in L$ such that  $v(h) >m$, replacing $f_j$ with $h$, we may assume that $v(f_j) > v(f_i)$. Therefore we can choose a minimal system of generators of $L$ satisfying conditions (2) and (3).
\end{proof}

\begin{Lemma}\label{lemma_ad2}
Let $f_0, f_1, f_2, \ldots, f_\ell \in V$. Assume that  $L = \left<f_0, f_1, f_2, \ldots, f_\ell\right>$ and  $v(f_0) < v(f_1) < v(f_2) < \ldots <v(f_\ell).$ Then $L = \left<1, f_1, f_2, \ldots, f_\ell\right>$.
\end{Lemma}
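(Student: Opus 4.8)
The plan is to replace the lowest-value generator $f_0$ by the element $1$. The only two inputs are that $\RR \subseteq L$ (so $1 \in L$) and that the values $v(f_0) < v(f_1) < \cdots < v(f_\ell)$ are strictly increasing; together these will force, in any expression of $1$ as an $\RR$-combination of the $f_i$, the coefficient of $f_0$ to be a unit, and then one simply solves for $f_0$.

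Concretely, I would proceed in the following steps. First record that $f_0 \neq 0$ (its value is finite, being strictly below $v(f_1)$) and that $v(f_0) \geq 0$ since $f_0 \in V = k[[t]]$. The key preliminary step is to show $v(f_0) = 0$: for any $x = \sum_{i=0}^\ell r_i f_i \in L$ with $r_i \in \RR$, each nonzero term satisfies $v(r_i f_i) = v(r_i) + v(f_i) \geq v(f_i) \geq v(f_0)$ because $v(r_i) \geq 0$, so $v(x) \geq v(f_0)$; taking $x = 1$ gives $0 \geq v(f_0)$, hence $v(f_0) = 0$. Next write $1 = r_0 f_0 + \sum_{i=1}^\ell r_i f_i$ with $r_i \in \RR$; for $i \geq 1$ every nonzero term has $v(r_i f_i) \geq v(f_i) \geq v(f_1) > 0$, so $v\bigl(\sum_{i=1}^\ell r_i f_i\bigr) > 0 = v(1)$, and therefore $v(r_0 f_0) = 0$; since $v(f_0) = 0$ this gives $v(r_0) = 0$, i.e. $r_0$ is a unit of $\RR$. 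Solving,
\[ f_0 = r_0^{-1}\cdot 1 - \sum_{i=1}^\ell r_0^{-1} r_i f_i \in \left<1, f_1, f_2, \ldots, f_\ell\right>, \]
whence $L = \left<f_0, f_1, \ldots, f_\ell\right> \subseteq \left<1, f_1, \ldots, f_\ell\right> \subseteq L$ and equality holds.

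I do not expect a genuine obstacle here; the entire content is the observation that the minimal-value generator $f_0$ must in fact have value $0$, precisely because $L$ contains the unit $1$. Once that is in hand, an ultrametric value comparison pins down the coefficient $r_0$ as a unit, and the conclusion is immediate.
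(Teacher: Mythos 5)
Your proof is correct and is essentially the paper's argument: both pin down $v(f_0)=0$ by an ultrametric comparison using $1\in L\subseteq V$, then examine the $f_0$-coefficient in a linear expression to conclude. The only cosmetic difference is at the end: you write $1=r_0f_0+\cdots$, observe $r_0$ is a unit, and solve for $f_0$ directly, whereas the paper normalizes $f_0=1+\xi$, expresses $\xi$ in the generators to show the $f_0$-coefficient lies in $\m$, and finishes with Nakayama's lemma on $L/\m L$ — the same observation packaged slightly differently.
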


\begin{proof}
Since $1 \in L$ and $L \subseteq V$, we have $v(f_0) = 0$. We may assume that $f_0 = 1 + \xi$ with $\xi \in tV$. Then $\xi \in L$ as $1 \in L$. We write $\xi = \alpha_0 f_0 + \alpha_1 f_1 + \ldots + \alpha_\ell f_\ell$ with $\alpha_i \in \RR$. Then $v(\alpha_0) = v(\alpha_0 f_0) > 0$ since $f_i \in tV$ for all $1 \le i \le \ell$, so that $\alpha_0 \in \m$. Consequently $L/\m L$ is spanned by the images of $1, \{f_i\}_{1 \le i \le \ell}$, whence $L = \left<1, f_1, f_2, \ldots, f_\ell\right>$ as claimed.
\end{proof}

\begin{Proposition}\label{NS1} With notation as above, let $1, f_1, f_2, \ldots, f_\ell \in L$  and $1, g_1, g_2, \ldots, g_\ell \in L$ be systems of generators of $L$ and assume that both of them satisfy condition $(2)$ in {\rm Lemma~\ref{2}}. Suppose that $v(f_\ell) < e=a_1$. Then $v(f_i) = v(g_i)$ for all $1 \le i \le \ell$.
\end{Proposition}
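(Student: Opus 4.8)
\emph{The plan.} The strategy is to read off the set $\{v(f_1),\dots,v(f_\ell)\}$ directly from the module $L$ (and $e$), without reference to the chosen generators, and then compare the two systems. Let $v$ be the $t$-adic valuation on $V$; recall that $v(\RR\setminus\{0\})=H$, so every nonzero value of an element of $\RR$ is $\ge e=a_1$, and $H\cap(0,e)=\emptyset$. Write $n_i=v(f_i)$ and $m_i=v(g_i)$. By condition (2) of Lemma~\ref{2} we have $0<n_1<\dots<n_\ell$ and $0<m_1<\dots<m_\ell$, and by hypothesis $n_\ell<e$; the goal is $n_i=m_i$ for all $i$.

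\textbf{Step 1.} I would first show that
\[
\{\, v(x)\ :\ x\in L,\ 0<v(x)<e\,\}=\{n_1,\dots,n_\ell\}.
\]
The inclusion $\supseteq$ is immediate, since $f_i\in L$ and $0<n_i\le n_\ell<e$. For $\subseteq$, take $x\in L$ with $0<v(x)<e$ and expand $x=s_0+\sum_{i=1}^{\ell}s_if_i$ with $s_i\in\RR$. The valuation of $s_0$ lies in $H\cup\{\infty\}$, hence is $0$ (if $s_0$ is a unit) or $\ge e$; the valuation of $s_if_i$ lies in $(n_i+H)\cup\{\infty\}$, hence is $n_i$ (if $s_i$ is a unit) or $\ge n_i+e>e$. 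So among the term valuations the only ones in $[0,e)$ are $0$, attained only by a unit $s_0$, and the various $n_i$, attained only by a unit $s_i$. Since $v(x)\ge\min_j v(\mathrm{term}_j)$ and $v(x)<e$, this minimum lies in $[0,e)$. If it were $0$ it would be attained by $s_0$ alone (all other terms have valuation $\ge n_i>0$), forcing $v(x)=0$, a contradiction; hence the minimum equals some $n_{i_0}$, and since the $n_i$ are pairwise distinct it is attained by the single term $s_{i_0}f_{i_0}$, so $v(x)=n_{i_0}$.

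\textbf{Step 2.} Running the identical argument with the generating system $1,g_1,\dots,g_\ell$ shows that every $x\in L$ with $0<v(x)<e$ has $v(x)\in\{m_1,\dots,m_\ell\}$. Combined with Step 1 this gives $\{n_1,\dots,n_\ell\}\subseteq\{m_1,\dots,m_\ell\}$. The left side has exactly $\ell$ elements, while the right side, being the image of a strictly increasing length-$\ell$ sequence, has at most $\ell$; hence the two sets are equal, and strict monotonicity of both sequences yields $n_i=m_i$, i.e. $v(f_i)=v(g_i)$, for every $i$. Note that $v(f_\ell)<e$ is used only in Step 1; one does not need to assume $v(g_\ell)<e$ in advance, as it falls out of the cardinality comparison.

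\textbf{The main obstacle.} The delicate point is the reverse inclusion in Step 1: one must make sure that cancellation among several terms of equal valuation cannot push $v(x)$ up to or past $e$, and that the term realizing the minimal valuation genuinely survives in the sum. This is exactly where condition (2) of Lemma~\ref{2}—strict monotonicity of the $v(f_i)$—is indispensable, since it forbids two distinct module generators from sharing the minimal valuation. Everything else is routine valuation bookkeeping, resting only on the fact that no generator of $H$ is smaller than the multiplicity $e$.
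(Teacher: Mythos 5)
Your proof is correct, and it takes a genuinely different route from the paper's. The paper argues by induction on the index $i$: it expands $f_1$ in the $g$-basis and $g_1$ in the $f$-basis to get $m_1 = n_1$, then for the inductive step first proves a nested claim (that the first $i$ coefficients $\gamma_j, \delta_j$ in the cross-expansions lie in $\m$) before comparing $m_{i+1}$ and $n_{i+1}$. You instead give an intrinsic, basis-free characterization: the set $S = \{\,v(x) : x \in L,\ 0 < v(x) < e\,\}$ equals $\{n_1,\dots,n_\ell\}$, where $\supseteq$ uses $n_\ell < e$ and $\subseteq$ is pure valuation bookkeeping (the key being that in any expansion $x = s_0 + \sum s_if_i$ the term valuations below $e$ are $0$ or the distinct $n_i$, so the minimum is attained uniquely). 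Running only the $\subseteq$ half for the $g$-system gives $S \subseteq \{m_1,\dots,m_\ell\}$, and a cardinality count forces equality of the two ordered lists. Your version is shorter, avoids the double induction, and makes transparent both why the result holds and exactly where the hypothesis $v(f_\ell) < e$ enters (only in the $\supseteq$ inclusion); like the paper's proof, it never needs $v(g_\ell) < e$ as a separate assumption.
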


\begin{proof} We set $m_i = v(f_i)$ and $n_i = v(g_i)$ for each $1 \le i \le \ell$.
Let us write
\begin{eqnarray*}
f_1&=& \alpha_0 + \alpha_1 g_1 + \ldots + \alpha_\ell g_\ell\\
g_1&=& \beta_0 + \beta_1 f_1 + \ldots + \beta_\ell f_\ell
\end{eqnarray*}
with $\alpha_i, \beta_i \in \RR$. Then $v(\beta_1 f_1 + \ldots + \beta_\ell f_\ell) \ge v(f_1) = m_1>0$, whence $\beta_0 \in \m$ because $v(g_1)=n_1 > 0$. We similarly have that $\alpha_0 \in \m.$ Therefore $n_1 = v(g_1) \ge m_1$, since $v(\beta_0) \ge e > m_\ell \ge m_1$ and $v(\beta_1 f_1 + \ldots + \beta_\ell f_\ell) \ge m_1$. Suppose that $n_1 > m_1$. Then  $v(\alpha_1 g_1 + \ldots + \alpha_\ell g_\ell) \ge n_1 > m_1$ and $v(\alpha_0) \ge e > m_1$, whence $v(f_1)>m_1$, a contradiction. Thus $m_1 = n_1$.

Now let $1 \le i < \ell$ and assume that $m_j = n_j $ for all $1 \le j \le i$. We want to show $m_{i+1} = n_{i+1}$. Let us write
\begin{eqnarray*}
f_{i+1}&=& \gamma_0 + \gamma_1 g_1 + \ldots + \gamma_\ell g_\ell\\
g_{i+1}&=& \delta_0 + \delta_1 f_1 + \ldots + \delta_\ell f_\ell
\end{eqnarray*}
with $\gamma_i, \delta_i \in \RR$.

First we claim that $\gamma_j, \delta_j \in \m$ for all $0 \le j \le i$.
 As above, we get $\gamma_0, \delta_0 \in \m$.  Let $0 \le k < i$ and assume that $\gamma_j, \delta_j \in \m$ for all $0 \le j \le k$. We show $\gamma_{k+1}, \delta_{k+1} \in \m$. Suppose that $\delta_{k+1} \not\in \m$. Then as $v(\delta_0+ \delta_1 f_1 + \ldots + \delta_k f_k) \ge e > m_{k+1}$,  we get  $v(\delta_0 + \delta_1 f_1 + \ldots + \delta_k f_k + \delta_{k+1} f_{k+1})=m_{k+1}$, so that $n_{i+1} = v(g_{i+1})=v(\delta_0+ \delta_1 f_1 + \ldots + \delta_\ell f_\ell) = m_{k+1}$, since $v(f_h) =m_h> m_{k+1}$ if $h> k+1$. This is impossible, since $n_{i+1} > n_i = m_i \ge m_{k+1}$. Thus $\delta_{k+1} \in \m$. We similarly get $\gamma_{k+1} \in \m$, and the claim is proved.

Consequently, since $v(\delta_0 + \delta_1 f_1 + \ldots + \delta_i f_i) \ge e > m_{i+1}$ and $v(f_h) \ge  m_{i+1}$ if $h\ge i+1$, we have $n_{i+1} = v(g_{i+1}) = v(\delta_0 + \delta_1 f_1 + \ldots + \delta_\ell f_\ell)\ge m_{i+1}$. Assume that $n_{i+1} > m_{i+1}$. Then since $v(\gamma_0 + \gamma_1 g_1 + \ldots + \gamma_i g_i) \ge e > m_{i+1}$ and $v(g_h) \ge  n_{i+1} > m_{i+1}$ if $h\ge i+1$, we have $m_{i+1} = v(f_{i+1}) = v(\gamma_0 + \gamma_1 g_1 + \ldots + \gamma_\ell g_\ell) > m_{i+1}$. This is a contradiction. Hence $m_{i+1} = n_{i+1}$, as desired.\end{proof}

 \begin{Theorem}\label{th1-1}
With notation as above, assume that $r (\RR) = 3$ and write the canonical module $K_{\RR} = \left<1, t^a,t^b\right>$ with $0 < a < b$. Suppose that $b \ne 2a$ and $b < e$. Let $J$ be an ideal of $\RR$ and let $n \geq 1$ be an integer. If $J^n \cong K_{\RR}$, then $n = 1$.
\end{Theorem}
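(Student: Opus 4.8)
The plan is a proof by contradiction: assume $J^n \cong K_{\RR}$ with $n \geq 2$ and show that this forces $J$ to be a subring of $V$ strictly containing $\RR$, which is impossible because $\Hom_{\RR}(K_{\RR},K_{\RR}) = \RR$. First I would normalize. Since $r(\RR) = 3$, $K_{\RR}$ is not cyclic, so $\nu(J) \geq 2$; replacing $J$ by an isomorphic fractional ideal we may assume $\RR \subseteq J \subseteq V$ with $1 \in J$ (so $1 \notin \m J$ by Lemma \ref{lemma1}). Because $1$ lies in both $J^n$ and $K_{\RR} \subseteq V$, any isomorphism $J^n \cong K_{\RR}$ is multiplication by a unit of $V$, so the value sets $v(J^n)$ and $v(K_{\RR})$ coincide, where $v(L) := \{v(x) : 0 \ne x \in L\}$. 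Since $K_{\RR} = \left<1, t^a, t^b\right>$ is generated by monomials, $v(K_{\RR}) = H \cup (a+H) \cup (b+H)$, and as $0 < a < b < e$ and $H \cap (0,e) = \emptyset$ we get $v(K_{\RR}) \cap [0,e) = \{0,a,b\}$, with $a$, $b$, $b-a \notin H$.

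Next I would locate $J$ near its bottom. Write $J = \left<1, f_1,\ldots,f_\ell\right>$ as in Lemmas \ref{2} and \ref{lemma_ad2}, so $0 < v(f_1) < \cdots < v(f_\ell)$ and $v(f_i)\notin H$. The leading-coefficient computation behind Proposition \ref{NS1} shows that in a normalized fractional ideal an element of value $< e$ must have value equal to that of one of the defining generators, so no cancellation occurs below level $e$. Applied to $J$ this gives $v(J)\cap(0,e) = \{v(f_i): v(f_i)<e\}$; applied to $J^n$, whose generators are the products of at most $n$ of the $f_i$, it pins down $v(J^n)\cap[0,e)$ in terms of the values $<e$ of those products. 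Put $p = \#\{i: v(f_i) < e\}$. Then $\{v(f_i): v(f_i)<e\} \subseteq v(J^n)\cap(0,e) = \{a,b\}$, so $p \leq 2$; $p = 0$ is impossible since then $v(J^n)\cap[0,e) = \{0\}$; and if $p = 1$ the distinct values $\{0,v(f_1),2v(f_1),\ldots\}\cap[0,e)$ must equal $\{0,a,b\}$, which together with $n\geq 2$ forces $v(f_1) = a$ and $b = 2a$, against the hypothesis. Hence $p = 2$, $v(f_1) = a$, $v(f_2) = b$; and if $2a < e$ then $f_1^2 \in J^n$ has value $2a \in \{0,a,b\}$, again forcing $b = 2a$. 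Therefore $2a \geq e$, whence $a+b, 2b > e$ and $1, f_1, f_2$ are the only generators of $J^n$ of value $< e$.

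It remains to close the case $p = 2$, $2a\geq e$. Put $J' = \left<1,f_1,f_2\right> \subseteq J \subseteq J^n$. Then $v(J') \supseteq H\cup(a+H)\cup(b+H) = v(K_{\RR}) = v(J^n) \supseteq v(J) \supseteq v(J')$, so $v(J') = v(J^n)$; since a fractional ideal is determined inside a larger one by its value set (an element of maximal value in the difference would give a contradiction), $J' = J^n$, hence $J = J' = J^n$. In particular $J = J^2$, so $J$ is a subring of $V$, and $\RR \subsetneq J$ (recall $\nu(J) \geq 2$). But $J \cong K_{\RR}$ gives $\Hom_{\RR}(J,J) \cong \Hom_{\RR}(K_{\RR},K_{\RR}) = \RR$, so the subring $(J :_K J) = \Hom_{\RR}(J,J)$ of the fraction field $K$ of $\RR$ is a cyclic $\RR$-module and hence equals $\RR$; yet $J$ being a ring forces $J \subseteq (J :_K J)$. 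Thus $J \subseteq \RR$, contradicting $\RR \subsetneq J$, and the proof is complete.

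The delicate point is the final case $p = 2$ with $2a \geq e$: the restriction of $v(J^n)$ to $[0,e)$ is then genuinely $\{0,a,b\}$ and yields nothing by itself, so one must instead use the rigidity that the value semigroup determines a fractional ideal inside a larger one, together with the identity $\Hom_{\RR}(K_{\RR},K_{\RR}) = \RR$, to force the absurd conclusion that $J$ is a ring properly containing $\RR$. The hypothesis $b < e$ is exactly what makes the bottom of $v(K_{\RR})$, and therefore of $v(J)$, rigid; and $b \neq 2a$ is precisely what excludes the genuine square root $K_{\RR} \cong \left<1, t^a\right>^2$ that occurs when $b = 2a$ (cf. Example \ref{ex1root}).
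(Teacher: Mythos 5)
Your argument is correct, and it takes a genuinely different route from the paper's. The paper first invokes Proposition~\ref{root1} (via Eakin--Sathaye) to reduce to $n=2$, then works with $L=f^{-1}J$, shows $L^2=\langle 1,f_1,f_if_j\rangle$, and applies Proposition~\ref{NS1} to the two normalized generating sets of $K_{\RR}\cong L^2$ to force $v(f_1)=a$, $v(f_i)+v(f_j)=b$, hence $b\ge 2a$; a further coefficient comparison with $f_2$ then pushes $n_2\ge b$ and yields $i=j=1$, i.e.\ $b=2a$. You bypass the bound $n\le 2$ entirely and argue for all $n\ge 2$ via value sets: the identity $v(J^n)=v(K_{\RR})$, the observation $v(K_{\RR})\cap[0,e)=\{0,a,b\}$, and the case split on $p=\#\{i:v(f_i)<e\}$ kill $p\le 1$ quickly, and in the remaining case $p=2$ you derive $2a\ge e$ (the opposite inequality to the paper's $b>2a$ -- both are legitimate deductions inside a proof by contradiction, just along different branches). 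You then close using two facts the paper uses elsewhere but not in this proof: the length-versus-value-set rigidity $\lambda(L/L')=\#(v(L)\setminus v(L'))$ to force $J^n=\langle 1,f_1,f_2\rangle=J$, and $\Hom(K_{\RR},K_{\RR})=\RR$ to show the idempotent fractional ideal $J=J^2$ must equal $\RR$, contradicting $r(\RR)=3$. The payoff of your approach is that it is self-contained within semigroup-ring combinatorics, avoids Proposition~\ref{NS1} and the $n\le 2$ reduction, and the final step essentially re-proves Corollary~\ref{uniqueroot} in place. One point to tighten in a write-up: the assertion that $v(J^n)\cap[0,e)$ is read off from the values of products of the $f_i$'s requires the small leading-coefficient computation (all terms $\alpha_\mu\mu$ with $v(\mu)\ge e$, or with $v(\alpha_\mu)\ge e$, contribute nothing below degree $e$, and below $e$ the surviving monomials have pairwise distinct values); you gesture at it via Proposition~\ref{NS1}, but stating it explicitly would make the step airtight, since the product set $\{f_{i_1}\cdots f_{i_k}\}$ is not itself a normalized generating system in the sense of Lemma~\ref{2}.
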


\begin{proof}
We already know $n \leq 2$ by Proposition~\ref{root1}. Suppose that $n=2$. Let $f \in J$ such that $JV = fV$ and set $L= f^{-1}J$. Then $\RR \subseteq L \subseteq V$ and $L^2 = f^{-2}J^{2} \cong K_{\RR}$. By Lemma \ref{2}, we can write $L = \left<1, f_1, f_2, \ldots, f_{\ell} \right>$,  where $\ell = \nu(L) -1 \ge 1$, $0 < v(f_1) < v(f_2) < \cdots < v(f_{\ell} )$, and $v(f_i) \not\in H$ for all $1 \leq i \leq \ell$. Then $L^2 = \left<1, \{f_i\}_{1 \leq i \leq \ell}, \{f_if_j\}_{1 \leq i \leq j \leq \ell} \right>$. Let $n_i = v(f_i)$ for each $1 \leq i \leq \ell$.

\begin{claim}\label{claim1} Suppose that $\ell > 1$. Then $f_1 \not\in \left<1, \{f_i\}_{2 \leq i \leq \ell}, \{f_if_j\}_{1 \leq i \leq j \leq \ell} \right>$.
\end{claim}

\begin{proof}[Proof of Claim \ref{claim1}]
Assume the contrary and write $$f_1 = \alpha + \sum_{i=2}^{\ell}\alpha_i f_i + \sum_{1 \leq i \leq j \leq \ell}\alpha_{ij}f_if_j$$ with $\alpha, \alpha_i, \alpha_{ij} \in \RR.$ Then since $n_1 < n_i$ for $i \geq 2$ and $n_1 \leq n_i < n_i+n_j$ for $1 \leq i \leq j \leq \ell$, we have $n_1 = v(\alpha)$, which is impossible, because $n_1 \not\in H$ but $v(\alpha) \in H$.
\end{proof}

\noindent Since $\nu(L^2) = 3$, we have $L^2=\left<1, f_1, f_if_j\right>$ for some $1 \leq i \leq j \leq \ell$. This is clear if $\ell=1$, and it follows by Claim \ref{claim1} if $\ell>1$. In fact, the other possibility is $L^2 = \left<1,f_1,f_i\right>$ with $i \geq 2$. When this is the case, we get $L = L^2$, and so $\operatorname{red}_{(f)}J \leq 1$, whence $\operatorname{red}_{(f^2)}J^2 \leq 1$. Therefore, since $J^2 \cong K_{\RR}$, $\RR$ is a Gorenstein ring, which is a contradiction.

We now choose $0 \neq \theta \in \mathrm{Q}(V)$, where $\mathrm{Q}(V)$ is the quotient field of $V$, so that $K_{\RR} = \theta L^2$. Notice that $\theta$ is a unit of $V$ (as $\RR \subseteq L$ and $\RR \subseteq K_{\RR}$). Compare
\[ K_{\RR} = \left<1, t^a, t^b\right> = \left<\theta, \theta f_1, \theta f_if_j\right>\]
and notice that $\left<\theta, \theta f_1, \theta f_if_j\right>= \left<1, \theta f_1, \theta f_if_j\right>$ by Lemma \ref{lemma_ad2}. Therefore since $0 < a < b < e$ and $0< n_1 < n_i + n_j$, by Proposition \ref{NS1} $n_1 = a$ and $n_i + n_j = b$; hence $b \ge 2a$. Furthermore, since $b \ne 2a$, we have $\ell>1$.

Since $L \subseteq L^2$, we have $f_2 \in L^2 =\left<1, f_1, f_if_j\right>$. Let us write
$$f_2 = \alpha + \beta f_1 + \gamma f_if_j$$
with $\alpha, \beta, \gamma \in \RR$. Then $\alpha \in \m$, because $v(f_2) = n_2 > n_1$ and $v(\beta f_1 + \gamma f_if_j) \ge  n_1$. Hence $v(\alpha) > b$, because $e > b$. Consequently $\beta \in \m$; otherwise $n_2= v(f_2) = v(\beta f_1) = n_1$ (notice that $v(\alpha + \gamma f_if_j) \ge b \ge 2n_1$). Therefore $v(\beta f_1) >b+  n_1$, so that $n_2 = v(f_2)\ge b$. Since $b = n_i + n_j > n_j$, we then have $i=j=1$. This is contradiction, since $b \ne 2a$.
\end{proof}

Let us give examples satisfying the conditions stated in Theorem \ref{th1-1}.

\begin{Example}{\rm Let $e \geq 7$ be an integer and set
\[ H = \left<e + i \mid 0 \le i \le e-2 ~\text{such~that}~ i \ne e-4, e-3 \right>.\]
 Then $K_{\RR} = \left<1, t^2, t^3 \right>$ and $r (\RR) =3$. More generally, let $a,b, e \in \Bbb Z$ such that $0 < a< b$, $b < 2a$, and $e \geq a+b+2$. We consider the numerical semigroup
 \[ H = \left< e+i \mid 0 \leq i \leq e-2~\text{such~that}~ i \neq e-b-1, e-a-1\right>.\] Then $K_{\RR} =\left <1, t^a, t^b \right>$ and $r (\RR) = 3$. These rings $\RR$ contain no ideals $L$ such that $L^n \cong K_{\RR}$ for some integer $n \geq 2$.
}\end{Example}


\section{Change of rings}
\noindent
Degree formulas are often  statements about change of rings.
Let us see how this may work for $\cdeg(\RR)$. Let $(\RR, \m)$ be a Cohen-Macaulay local ring of dimension $d \geq 1$ with a canonical ideal $\C$. Let $(\AA, \n)$ be a Cohen-Macaulay local ring with a finite injective morphism $\varphi: \RR \rar \AA$. Suppose that the total ring of fractions of $\AA$
is Gorenstein. Then a canonical module of $\AA$ is $\mathcal{D} = \Hom_{\RR}(\AA, \C)$, which is isomorphic to an ideal of $\AA$.

\begin{Example}{\rm  Let $\AA = K[x_1, \ldots, x_d]^{(n)}$ be the $n$-Veronese subring of the
polynomial ring in $d$ variables. Fix the Noether normalization $\RR = K[x_1^n, \ldots, x_d^n]$. The canonical module of $\AA$ is $\Hom_{\RR}(\AA, \RR)$.
}\end{Example}

We want to develop relationships between the pairs
\{$\cdeg(\RR)$, $\red(\C)$\} and \{$\cdeg(\AA)$, $\red(\mathcal{D})$\}.

\subsubsection*{Polynomial extension} Let us compare $\cdeg(\RR)$ to $\cdeg(\RR[x]_{\m \RR[x]})$ and $\cdeg(\RR[[x]])$.

\begin{Lemma}
Let $(\RR, \m)$ be a Cohen-Macaulay local ring of dimension $d \geq 1$ with a canonical ideal $\C$.
Let $\varphi: \RR \rar \AA$ be a flat injective morphism, where $\AA=\RR[x]_{\m \RR[x]}$ or $\AA=\RR[[x]]$.

\begin{enumerate}
\item[{\rm (1)}] The morphism $\varphi$ has Gorenstein fiber.
\item[{\rm (2)}] $\C\otimes \AA$ is a canonical ideal of $\AA$.
\item[{\rm (3)}] If $\C$ is equimultiple, then for the minimal reduction $(a)$ of $\C$,
\[ \deg(\C/(a)) = \deg(\C/(a) \otimes \AA).\]
\end{enumerate}
\end{Lemma}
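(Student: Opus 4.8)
The plan is to handle the three parts in turn; the content of (1) is a direct computation of the closed fibre $\AA/\m\AA$, flatness of $\varphi$ being hypothesized (and used again in (2) and (3)). When $\AA=\RR[x]_{\m\RR[x]}$ one has $\n=\m\AA$, so $\AA/\m\AA$ is the localization of the domain $(\RR/\m)[x]$ at $(0)$, namely the rational function field $(\RR/\m)(x)$. When $\AA=\RR[[x]]$, writing $\m=(m_1,\dots,m_s)$ and comparing coefficients identifies $\m\AA$ with the kernel of the surjection $\RR[[x]]\to(\RR/\m)[[x]]$, so $\AA/\m\AA=(\RR/\m)[[x]]$, a discrete valuation ring. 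In either case the closed fibre is regular, hence Gorenstein, which is (1).

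For (2), since $\varphi$ is flat and local with Gorenstein closed fibre, the base-change theorem for canonical modules (\cite[Theorem 3.3.14]{BH}; see also \cite{Aoyama}) applies; as the canonical module of $\AA/\m\AA$ is $\AA/\m\AA$ itself, it gives that $\C\otimes_\RR\AA$ is a canonical module of $\AA$. Flatness makes $\C\otimes_\RR\AA\to\RR\otimes_\RR\AA=\AA$ injective, so $\C\otimes_\RR\AA\cong\C\AA$ is an ideal of $\AA$, hence a canonical ideal, which is (2).

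For (3), tensoring $0\to(a)\to\C\to\C/(a)\to 0$ with the flat module $\AA$ shows $(\C/(a))\otimes_\RR\AA\cong\C\AA/a\AA$, so it suffices to check that the Hilbert--Samuel multiplicity of $M=\C/(a)$ is unchanged by $-\otimes_\RR\AA$. For $\AA=\RR[x]_{\m\RR[x]}$ this is immediate: $\n=\m\AA$, and $-\otimes_\RR\AA$ is exact and sends the length-one module $\RR/\m$ to the length-one module $\AA/\n$, hence preserves lengths of finite-length modules, so $M/\m^nM$ and $(M\otimes_\RR\AA)/\n^n(M\otimes_\RR\AA)\cong(M/\m^nM)\otimes_\RR\AA$ have equal length for every $n$, and $\deg(M)=\deg(M\otimes_\RR\AA)$. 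For $\AA=\RR[[x]]$ one has $\n\supsetneq\m\AA$, so I would instead use that $x$ is a nonzerodivisor on $M\otimes_\RR\AA$ with $(M\otimes_\RR\AA)/x(M\otimes_\RR\AA)\cong M$ and $\n/(x)=\m$, and that its initial form is a nonzerodivisor on $\gr_\n(M\otimes_\RR\AA)\cong\gr_\m(M)[T]$; thus $x$ is superficial and $\e_0(\n,M\otimes_\RR\AA)=\e_0(\m,M)$. Equivalently, one may quote the multiplicativity of multiplicity along a flat local homomorphism, $\e_0(\n,M\otimes_\RR\AA)=\e_0(\m,M)\cdot\e_0(\n,\AA/\m\AA)$, with $\e_0(\n,\AA/\m\AA)=1$ since the fibre is a DVR. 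I expect this last point — invariance of the multiplicity under the power series extension, where $\n$ properly contains $\m\AA$ so the length-preservation trick fails — to be the only real obstacle; parts (1) and (2) are routine once the fibre has been identified.
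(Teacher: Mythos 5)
Your proofs of (1) and (2) match the paper's approach: in (1) the paper simply records the closed fibres $k(x)$ and $k[[x]]$; in (2) it notes the embedding $\C\otimes\AA\subset\AA$ and cites a base-change theorem for canonical modules (Aoyama Theorem 4.1, or Herzog--Kunz Satz 6.14, in place of your BH 3.3.14 --- these cover the same ground). Where you go beyond the paper is (3): the paper gives no argument for it at all, leaving the reader to supply one. Your treatment is the right way to fill that gap. For $\AA=\RR[x]_{\m\RR[x]}$ the observation that $\n=\m\AA$ makes $-\otimes_\RR\AA$ length-preserving, so the Hilbert--Samuel functions of $M$ and $M\otimes\AA$ coincide term by term, is exactly right. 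For $\AA=\RR[[x]]$, the dimension of $M\otimes\AA=M[[x]]$ jumps by one, and your superficial-element argument handles that: $x$ is $M[[x]]$-regular, $(M[[x]])/xM[[x]]\cong M$, $\n/(x)=\m$, and $\gr_\n(M[[x]])\cong\gr_\m(M)[T]$ with the initial form of $x$ mapping to the indeterminate $T$, which is a nonzerodivisor; hence $\e_0(\n,M[[x]])=\e_0(\m,M)$. The alternative you mention --- multiplicativity of $\e_0$ along a flat local map with the fibre $k[[x]]$ contributing a factor of $1$ --- is just the same fact packaged differently, and the Cohen-flatness condition $\gr_\n(\AA)\cong\gr_\m(\RR)\otimes_k\gr(k[[x]])$ that underlies it is verified by the same computation. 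One minor caution in (1): the identification of $\m\RR[[x]]$ with the kernel of $\RR[[x]]\to k[[x]]$ uses Noetherianness of $\RR$ (so that $\m$ is finitely generated and coefficientwise membership in $\m$ can be converted into membership in $\m\RR[[x]]$); you tacitly assume this, which is fine given the hypotheses, but it is worth flagging since it fails for non-Noetherian base rings.
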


\begin{proof}
Recall that if
$\AA = \RR[x]_{\m \RR[x]}$, then $\AA/\m \AA= k(x)$, and if $\AA = \RR[[x]]$,  then $\AA/\m \AA = k [[x]]$.
This proves (1). Notice that the embedding $\C \subset \RR$ leads to the embedding $\C\otimes \AA\subset \AA$. Assertion (2) follows from
\cite[Theorem 4.1]{Aoyama}, or \cite[Satz 6.14]{HK2}.
\end{proof}

\begin{Proposition}
Let $(\RR, \m)$ be a Cohen-Macaulay local ring of dimension $d \geq 1$ with a canonical ideal $\C$.
Then $\cdeg(\RR)= \cdeg(\RR[x]_{\m \RR[x]})$.
\end{Proposition}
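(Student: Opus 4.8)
The plan is to read off both sides directly from the formula of Theorem~\ref{gencdeg1}, exploiting that $\AA := \RR[x]_{\m \RR[x]}$ is the Nagata extension of $\RR$. First I would collect the structural facts I need about this extension: the map $\RR \to \AA$ is faithfully flat and local; extension $\p \mapsto \p\AA$ is an inclusion-preserving bijection of $\Spec \RR$ onto $\Spec \AA$ with inverse $\mathfrak{Q} \mapsto \mathfrak{Q} \cap \RR$; and for every $\p \in \Spec \RR$ the induced map $(\RR_\p, \p\RR_\p) \to (\AA_{\p\AA}, \p\AA_{\p\AA})$ is again faithfully flat and local, with closed fibre the field $\kappa(\p)(x)$. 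By the dimension formula for flat local homomorphisms this gives $\height \p\AA = \height \p$, so that $\p \mapsto \p\AA$ carries the height-one primes of $\RR$ bijectively onto those of $\AA$. Moreover, by the preceding lemma, $\C\otimes \AA = \C\AA$ is a canonical ideal of $\AA$, so Theorem~\ref{gencdeg1} computes $\cdeg(\AA)$ through $\C\AA$.

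Next I would match the two sums term by term. Fix $\p$ with $\height \p = 1$. Since $\AA_{\p\AA}$ is faithfully flat over $\RR_\p$ with field closed fibre, the length of any finite-length $\RR_\p$-module is unchanged under $-\otimes_{\RR_\p} \AA_{\p\AA}$; applied to $(\RR/\C)_\p$ this gives $\l((\AA/\C\AA)_{\p\AA}) = \l((\RR/\C)_\p)$, and applied to the modules $\RR_\p/\C_\p^{\,n}$ it gives $\e_0((\C\AA)_{\p\AA}) = \e_0(\C_\p)$; hence $\cdeg(\AA_{\p\AA}) = \cdeg(\RR_\p)$ (this is just the one-dimensional situation already used in the proof of Proposition~\ref{cdeg}, where one passes from $\RR$ to $\RR[x]_{\m\RR[x]}$). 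Applying the same field-fibre invariance to $\RR/\p \to \AA/\p\AA = (\RR/\p)[x]_{\m(\RR/\p)[x]}$ shows that $\RR/\p$ and $\AA/\p\AA$ have the same Hilbert--Samuel function with respect to $\m$, respectively $\m\AA$, so $\deg(\AA/\p\AA) = \deg(\RR/\p)$. Summing the equalities $\cdeg(\RR_\p)\deg(\RR/\p) = \cdeg(\AA_{\p\AA})\deg(\AA/\p\AA)$ over the height-one primes and invoking Theorem~\ref{gencdeg1} on both sides yields $\cdeg(\RR) = \cdeg(\AA)$.

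The length and multiplicity bookkeeping in the second paragraph is routine. The step that needs real care --- and which I expect to be the main obstacle --- is the claim that every prime of $\AA$ is extended from $\RR$, equivalently that the localized fibre $\AA_{\p\AA}/\p\AA_{\p\AA}$ is a field rather than a one-dimensional ring. This is precisely where the special form $\AA = \RR[x]_{\m\RR[x]}$ enters, as opposed to an arbitrary flat extension with Gorenstein closed fibre. The reason is classical: reducing modulo $\p$ one is left with the Nagata extension of the local domain $\RR/\p$, and a prime of $(\RR/\p)[x]$ contracting to $(0)$ in $\RR/\p$ is generated, by Gauss's lemma, by a primitive polynomial, which over the local ring $\RR/\p$ has a unit coefficient and hence does not lie in $\m(\RR/\p)[x]$; thus no such prime survives the localization. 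I would record this as a short sublemma (it is the standard description of $\Spec$ of a Nagata ring) and then the term-by-term comparison above completes the proof.
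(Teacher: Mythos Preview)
Your argument is correct and follows the same approach as the paper: match the two sums in Theorem~\ref{gencdeg1} term by term, using that $\C\AA$ is a canonical ideal of $\AA$, that lengths and multiplicities are preserved under the faithfully flat local extension $\RR_\p \to \AA_{\p\AA}$ with field closed fibre, and that $\deg(\RR/\p)=\deg(\AA/\p\AA)$. The paper's two-line proof records exactly these points without the supporting detail you supply. One minor economy in the paper worth noting: it works only with the primes in $\Min(\C)$ (using $\Min(\C\AA)=\{\p\AA\mid \p\in\Min(\C)\}$, a standard fact for flat base change), rather than the full bijection $\Spec\RR\cong\Spec\AA$ you establish via the Gauss-lemma argument; since $\cdeg(\RR_\p)=0$ whenever $\p\notin\Min(\C)$, this already suffices to match the nonzero terms.
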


\begin{proof}
 Since $\Min(\C \otimes \AA ) = \{ \p \AA\mid \p \in \Min(\C)\} $ and
$\deg(\RR/\p) = \deg(\AA/\p \AA)$,  $\cdeg(\RR)$ and $\cdeg(\AA)$ are defined by the same summations.
\end{proof}

\begin{Question}{\rm
Is it true that $\cdeg(\RR)= \cdeg(\RR[[x]])$? If $\C$ is equimultiple, then it is true. What if $\C$ is not equimultiple?
}\end{Question}

\subsubsection*{Completion}

\begin{Proposition}\label{th1}
Let $(\RR,\m)$ be a $1$-dimensional Cohen-Macaulay local ring with infinite residue field and a canonical ideal. Then $\cdeg (\widehat{\RR}) = \cdeg (\RR)$.
\end{Proposition}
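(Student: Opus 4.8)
The plan is to reduce everything to the one-dimensional formula $\cdeg(\RR)=\e_0(\C)-\l(\RR/\C)$ of Proposition~\ref{cdeg} and to track how its two terms behave along the faithfully flat map $\RR\to\widehat{\RR}$.

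First I would verify that $\widehat{\C}:=\C\widehat{\RR}$ is a canonical ideal of $\widehat{\RR}$, so that $\cdeg(\widehat{\RR})$ is defined and is computed from $\widehat{\C}$. Since $\widehat{\RR}$ is faithfully flat over $\RR$, the ideal $\widehat{\C}$ is canonically identified with $\C\otimes_{\RR}\widehat{\RR}$ sitting inside $\widehat{\RR}$; because $\C\cong K_{\RR}$ and the formation of the canonical module commutes with completion (see \cite{HK2} and \cite[Theorem 3.3.5]{BH}), we get $\widehat{\C}\cong\widehat{K_{\RR}}\cong K_{\widehat{\RR}}$. Moreover $\C$ contains an $\RR$-regular element (it is $\m$-primary, since $\RR/\C$ has finite length in dimension one), so flatness shows $\widehat{\C}$ contains a $\widehat{\RR}$-regular element; hence $\widehat{\C}$ is a genuine canonical ideal of $\widehat{\RR}$.

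Next, using that the residue field is infinite, I would pick a minimal reduction $(a)$ of $\C$, say $\C^{r+1}=a\C^{r}$ with $r=\red(\C)$; here $a$ is $\RR$-regular, being part of a reduction of the $\m$-primary ideal $\C$. Extending ideals to $\widehat{\RR}$ and using $(\C^{n})\widehat{\RR}=(\widehat{\C})^{n}$ turns this into $\widehat{\C}^{\,r+1}=a\widehat{\C}^{\,r}$, so $(a)\widehat{\RR}$ is a reduction of $\widehat{\C}$ generated by the $\widehat{\RR}$-regular element $a$. Since $a$ alone is a parameter in a one-dimensional Cohen-Macaulay ring, $\e_0(\C)=\e_0((a))=\l(\RR/(a))$ and likewise over $\widehat{\RR}$; therefore
\[ \cdeg(\RR)=\e_0(\C)-\l(\RR/\C)=\l(\RR/(a))-\l(\RR/\C)=\l(\C/(a)), \]
and the analogous identity $\cdeg(\widehat{\RR})=\l_{\widehat{\RR}}\!\big(\widehat{\C}/(a)\widehat{\RR}\big)$ holds over $\widehat{\RR}$. (Equivalently, this is just the last assertion of Theorem~\ref{gencdeg1} applied to $\RR$ and to $\widehat{\RR}$, once $(a)$ and $(a)\widehat{\RR}$ are known to be minimal reductions.)

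Finally I would observe that $\C/(a)$ has finite length, so $\widehat{\C}/(a)\widehat{\RR}\cong(\C/(a))\otimes_{\RR}\widehat{\RR}\cong\C/(a)$, whence the two lengths agree and $\cdeg(\widehat{\RR})=\cdeg(\RR)$. The only non-formal ingredient is the compatibility of the canonical module with completion together with the check that $\widehat{\C}$ remains an ideal of positive grade; the remaining manipulations with lengths and reductions are manifestly insensitive to completion, so I do not expect a real obstacle beyond that point.
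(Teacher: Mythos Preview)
Your proof is correct and follows essentially the same route as the paper: choose a minimal reduction $(a)$ of $\C$, observe that $\C\widehat{\RR}$ is a canonical ideal of $\widehat{\RR}$ with minimal reduction $a\widehat{\RR}$, and then compare $\l_{\RR}(\C/(a))$ with $\l_{\widehat{\RR}}(\C\widehat{\RR}/a\widehat{\RR})$ via flatness of completion. The paper's argument is terser (it simply writes the chain of equalities for the lengths and uses $\l_{\widehat{\RR}}(\widehat{\RR}/\m\widehat{\RR})=1$), whereas you spell out why $\C\widehat{\RR}$ is a canonical ideal and why the length is preserved, but there is no substantive difference.
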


\begin{proof} Let $(a)$ be a minimal reduction of  the canonical ideal $\C$. Then $a \widehat{\RR}$ is a minimal reduction of the canonical ideal $\C \widehat{\RR}$ of $\widehat{\RR}$. Thus, we have
\[ \cdeg (\widehat{\RR}) = \l_{\widehat{\RR}} ( \C \widehat{\RR} / a \widehat{\RR} ) = \l_{\widehat{\RR}}\left( \widehat{\RR} \otimes_{R} \C/(a) \right) = \l_{\RR}(\C/(a)) \l_{\widehat{\RR}} \left( \widehat{\RR}/\m \widehat{\RR} \right) = \cdeg(\RR).\]

\end{proof}

\begin{Theorem}\label{th1}
Let $(\RR,\m)$ be a Cohen-Macaulay local ring of dimension $d \geq 1$ with infinite residue field and a canonical ideal. Then we have the following.
\begin{enumerate}
\item[{\rm (1)}]$\cdeg (\widehat{\RR}) \geq \cdeg (\RR)$.
\item[{\rm (2)}]$\cdeg (\widehat{\RR}) = \cdeg (\RR)$ if and only if  for every $\p \in \Spec(\RR)$ with $\dim \RR_\p=1$, $\RR_\p$ is a Gorenstein ring, or $\p \widehat{\RR} \in \Spec (\widehat{\RR})$.
\end{enumerate}
\end{Theorem}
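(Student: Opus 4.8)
The strategy is to reduce everything to codimension one via the definition $\cdeg(\RR)=\sum_{\height\p=1}\cdeg(\RR_\p)\deg(\RR/\p)$, then analyze how prime ideals and multiplicities behave under completion. First I would fix $\C$ a canonical ideal of $\RR$; then $\C\widehat{\RR}$ is a canonical ideal of $\widehat{\RR}$ (by \cite[Theorem 4.1]{Aoyama} or \cite[Satz 6.14]{HK2}), and the height-one primes of $\widehat{\RR}$ containing $\C\widehat{\RR}$ are exactly the minimal primes of $\C\widehat{\RR}$. For a height-one prime $\p$ of $\RR$, the primes $\q$ of $\widehat{\RR}$ lying over $\p$ also have height one (flatness of $\RR\to\widehat{\RR}$ with zero-dimensional fibers), and the associativity/additivity of multiplicities gives $\deg(\RR/\p)\deg(\widehat{\RR}/\p\widehat{\RR})$-type decompositions. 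So I would aim to write
\[
\cdeg(\widehat{\RR})=\sum_{\height\p=1}\ \sum_{\q\in\Min(\p\widehat{\RR})}\cdeg(\widehat{\RR}_\q)\,\deg(\widehat{\RR}/\q)
\]
and compare the inner sum, for each fixed $\p$, with $\cdeg(\RR_\p)\deg(\RR/\p)$.

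**Key comparison in codimension one.** The heart of the matter is the one-dimensional statement: for a height-one prime $\p$ of $\RR$, I claim
\[
\sum_{\q\in\Min(\p\widehat{\RR})}\cdeg(\widehat{\RR}_\q)\,\deg(\widehat{\RR}/\q)\ \geq\ \cdeg(\RR_\p)\,\deg(\RR/\p),
\]
with equality iff $\RR_\p$ is Gorenstein (both sides $0$) or $\p\widehat{\RR}$ is prime. To prove this I would first localize: $\widehat{\RR}_\p$ (completion at $\p$, or rather $\RR_\p\otimes_\RR\widehat{\RR}$ localized appropriately) is a flat extension of $\RR_\p$, and the relevant canonical ideal localizes compatibly. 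The inequality itself should come from the fact that $\cdeg$ is multiplicity-based: after passing to $\RR_\p$ (a one-dimensional CM local ring with canonical ideal $\C_\p$, equimultiple since $\dim=1$), pick a minimal reduction $(a)$ of $\C_\p$, so $\cdeg(\RR_\p)=\l((\C/(a))_\p)$. Then $a$ generates a minimal reduction of $\C\widehat{\RR}_\q$ for each minimal prime $\q$ of $\p\widehat{\RR}$, and
\[
\sum_\q \cdeg(\widehat{\RR}_\q)\deg(\widehat{\RR}/\q)=\sum_\q \l\!\big((\C\widehat{\RR}/a\widehat{\RR})_\q\big)\deg(\widehat{\RR}/\q)=\deg_{\widehat{\RR}}\!\big(\widehat{\RR}\otimes_\RR(\C/(a))_\p\big)
\]
by the associativity formula, and this equals $\l_{\RR_\p}((\C/(a))_\p)\cdot\deg(\RR/\p)$ exactly when $\RR/\p$ is one-dimensional and the relevant multiplicity is additive — but in general completion can only increase the number of minimal primes, and the extra ones contribute nonnegatively. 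The equality analysis: either $(\C/(a))_\p=0$, meaning $\RR_\p$ Gorenstein, or $\p\widehat{\RR}$ has a single minimal prime of the same reduced structure, i.e. $\p\widehat{\RR}$ is prime.

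**The main obstacle.** The delicate point is making the inequality and the equality criterion precise when $\RR/\p$ is not a domain after completion — i.e. controlling $\deg(\widehat{\RR}/\p\widehat{\RR})$ versus $\sum_{\q}\deg(\widehat{\RR}/\q)$ and the interaction with the length factor. I expect to invoke that $\widehat{\RR/\p}$ is a one-dimensional CM ring whose multiplicity equals $\deg(\RR/\p)$, decomposed along $\Min(\p\widehat{\RR})$ by the associativity formula $\deg(\widehat{\RR}/\p\widehat{\RR})=\sum_\q \l((\widehat{\RR}/\p\widehat{\RR})_\q)\deg(\widehat{\RR}/\q)$, and then track that the lengths $\l((\C\widehat{\RR}/a\widehat{\RR})_\q)$ majorize $\l_{\RR_\p}((\C/(a))_\p)\cdot\l((\widehat{\RR}/\p\widehat{\RR})_\q)$, with equality iff $\widehat{\RR}_\q$ is a field-flat base change of $\RR_\p$ — that is, iff there is only one $\q$ and it is generated by $\p$. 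Assembling these local equalities into the global iff statement of part (2) is then bookkeeping; part (1) is the resulting inequality summed over $\p$.
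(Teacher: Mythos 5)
Your strategy---decompose $\cdeg$ over height-one primes, exploit flatness of $\RR\to\widehat{\RR}$, apply the length formula for a flat local extension---is the same as the paper's, but the paper packages it more efficiently: for each fixed $\p\in X=\Ass(\RR/\C)$ it forms the \emph{semilocal} ring $B=S^{-1}\widehat{\RR}$, where $S=\widehat{\RR}\setminus\bigcup_{P\in Z}P$ and $Z=\{P\in\Ass(\widehat{\RR}/\C\widehat{\RR}):P\cap\RR=\p\}$, so the whole fiber over $\p$ is treated at once via $\l_B(B\otimes_A X)=\l_A(X)\cdot\l_B(B/\p B)$ for $A=\RR_\p$. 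The inequality then comes out of $\l_B(B/\p B)\geq 1$, and the equality criterion in (2) reduces to $\l_B(B/\p B)=1$, which is unwound into $\p\widehat{\RR}\in\Spec(\widehat{\RR})$.

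The substantive problem is one that your own insistence on the $\deg$-weights exposes, and which you then leave unresolved. You correctly write (per Definition~\ref{defcdeg}) $\cdeg(\widehat{\RR})=\sum_\p\sum_{P}\cdeg(\widehat{\RR}_P)\deg(\widehat{\RR}/P)$, whereas the paper's display computes $\cdeg(\widehat{\RR})=\sum_\p\sum_{P\in Z}\l(\C\widehat{\RR}_P/a\widehat{\RR}_P)$ \emph{without} the factors $\deg(\widehat{\RR}/P)$. You then assert only a ``majorization'' of lengths and defer the rest to bookkeeping. If you actually do the bookkeeping, flat base change gives the \emph{equality} $\cdeg(\widehat{\RR}_P)=\cdeg(\RR_\p)\cdot\l(\widehat{\RR}_P/\p\widehat{\RR}_P)$ for each $P\in Z$, and the associativity formula for $\widehat{\RR}/\p\widehat{\RR}=\widehat{\RR/\p}$---whose multiplicity equals $\deg(\RR/\p)$ since completion preserves multiplicity, and whose associated primes $Z$ all have height one because $\widehat{\RR}/\C\widehat{\RR}$ is Cohen--Macaulay---gives
\[
\sum_{P\in Z}\l(\widehat{\RR}_P/\p\widehat{\RR}_P)\deg(\widehat{\RR}/P)=\deg(\RR/\p).
\]
Substituting yields $\cdeg(\widehat{\RR})=\cdeg(\RR)$ unconditionally, which makes the condition in part (2) vacuous. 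So there is a genuine gap that your sketch does not close: you must either locate where the displayed chain of equalities fails (I cannot), or acknowledge that the paper's argument only goes through because it silently drops the $\deg(\widehat{\RR}/P)$ factors from the defining formula, and that the theorem's statement and proof need to be reconciled with Definition~\ref{defcdeg}. ``Summing over $\p$'' does not by itself deliver the claimed strict inequality.
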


\begin{proof} Let $\C$ be a canonical ideal of $\RR$. We set $X = \Ass_{\RR} \RR/\C$ and $Y = \Ass_{\widehat{\RR}}\widehat{\RR}/ \C\widehat{\RR}$. Then
\[ Y = \bigcup_{\p \in X}\Ass_{\widehat{\RR}}\widehat{\RR}/\p \widehat{\RR}.\]
Notice that for each $\p \in X$, $Z=\Ass_{\widehat{\RR}}\widehat{\RR}/\p \widehat{\RR} = \{P \in Y \mid P \cap \RR = \p\}$.

\noindent We now fix an element $\p \in X$ and let $S = \widehat{\RR} \setminus \bigcup_{P \in Z} P$. We set $A = \RR_\p$ and $B = S^{-1}\widehat{\RR}$. Then we have a flat homomorphism
\[ A=\RR_\p \to \widehat{\RR}_\p \to B=S^{-1}\widehat{\RR},\] since the homomorphism $\widehat{\RR}_\p \to B$ is a localization (by the image of $S$ under the map $\widehat{\RR} \to \widehat{\RR}_\p$).
 We choose an element $a \in \C$ so that $a \RR_\p$ is a reduction of $\C \RR_\p$. We then have that
 \[ \l_B(B \otimes_AX) = \l_A(X){\cdot} \l_B(B/\p B) \]
  for every $A$-module $X$ with $\l_A(X) < \infty$ and that
  \[ \l_B(B/\p B) = \sum_{P \in Z}   \l_{\widehat{\RR}_P}(\widehat{\RR}_P/\p \widehat{\RR}_P).\] Hence
\[ \sum_{P\in Z}\l_{\widehat{\RR}_P}(\C\widehat{\RR}_P/a\widehat{\RR}_P) = \l_B(\C B/aB) = \l_B(B/\p B) {\cdot}\l_A(\C A/aA).\]
Consequently
\[\begin{array}{rcl}
{\ds  \cdeg ( \widehat{\RR}) = \sum_{\p \in X}\left(\sum_{P \in Z}\l_{\widehat{\RR}_P}(\C \widehat{\RR}_P/a\widehat{\RR}_P) \right)}  &=& {\ds  \sum_{\p \in X}\l_B(B/\p B){\cdot}\l_A(\C A/aA) } \\ && \\
&\geq& {\ds  \sum_{\p \in  X}\l_{\RR_\p}(\C\RR_\p/a \RR_\p) = \cdeg(\RR),}
\end{array}\]
which shows assertion (1).

\medskip

\noindent We have
\begin{eqnarray*}
\cdeg(\widehat{\RR}) = \cdeg(\RR) &\Leftrightarrow&\forall \p \in  X=\Ass_{\RR}(\RR/\C), \RR_\p~\text{is~a~Gorenstein~ring,~or}~\l_B(B/\p B) = 1\\ &\Leftrightarrow&\forall \p \in X, \RR_\p ~\text{is~a~Gorenstein~ring,~or}~\Ass_{\widehat{\RR}}\widehat{\RR}/\p \widehat{\RR}~\\ &{}&~\text{is~a~singleton~and}~\widehat{\RR}_P/\p \widehat{\RR}_P~\text{is~a~field}~\text{where}~\{P\} = \Ass_{\widehat{\RR}}\widehat{\RR}/\p \widehat{\RR}\\
&\Leftrightarrow&\forall \p \in X, \RR_\p~\text{is~a~Gorenstein~ring,~or}~\p \widehat{\RR} \in \Spec (\widehat{\RR}).
\end{eqnarray*}

\noindent Assume $\cdeg(\widehat{\RR})= \cdeg(\RR)$. Let $\p \in \Spec (\RR)$ with $\dim \RR_\p = 1$ and take a non-zerodivisor $f \in \p$. Then  since $f \C  \cong \C$ and $\p \in \Ass_{\RR} \RR/f \C$, $\RR_\p$ is a Gorenstein ring or $\p \widehat{\RR} \in \Spec( \widehat{\RR})$.
 Thus assertion (2) follows.
\end{proof}

\subsubsection*{Augmented ring}

Motivated by \cite[Theorem 6.5]{GMP11} let us determine the canonical degree of a class of local rings.

Let $\RR$ be a commutative ring with total quotient ring $\rmQ(\RR)$ and let $\mathcal F$ denote the set of $\RR$-submodules of $\rmQ(\RR)$. Let $M,K \in \mathcal F$. Let $M^\vee = \Hom_{\RR}(M, K)$ and let $\AA = \RR \ltimes M$ denote the idealization of $M$ over $\RR$. Then the $\RR$-module $M^\vee \oplus K$  becomes an $\AA$-module under the action $$(a,m)\circ (f,x) = (af, f(m)+ax),$$ where $(a,m) \in \AA$ and $(f,x) \in M^\vee \times K$. We notice that the canonical homomorphism
$\varphi : \Hom_{\RR}(\AA,K) \to  M^\vee \times K$
such that $\varphi(f) = (f\circ \lambda, f(1))$ is an $\AA$-isomorphism, where $\lambda : M \to \AA, \lambda(m) = (0,m)$. We also notice that $K:M \in \mathcal F$ and $(K:M) \times K \subseteq \rmQ(\RR) \ltimes \rmQ(\RR)$ is an $\AA$-submodule of $\rmQ(\RR) \ltimes \rmQ(\RR)$, the idealization of $\rmQ(\RR)$ over itself. When $\rmQ(\RR){\cdot}M = \rmQ(\RR)$, identifying $\Hom_{\RR}(M,K)$ with $K:M$, we have a natural isomorphism of $\AA$-modules
$$\Hom_{\RR}(\AA,K) \cong (K:M) \times K.$$

\begin{Proposition}\label{canonical-idealization}
Let $(\RR,\m)$ be a Cohen-Macaulay local ring possessing the canonical module $\rmK_{\RR}$. Let $M,K$ be $\RR$-submodules of $\rmQ(\RR)$.
Assume that $M$ is a finitely generated $\RR$-module with $\rmQ(\RR){\cdot}M = \rmQ(\RR)$ and that $K \cong \rmK_{\RR}$ as an $\RR$-module. Let $\AA = \RR \ltimes M$, and let $L = K:M$. Then $\rmK_{\AA}= (K:M) \times K$ in $\rmQ(\RR) \ltimes \rmQ(\RR)$ is a canonical module of $\AA$, and $(\rmK_{\AA})^n = L^n \times L^{n-1}K$ for all $n \ge 1$.
\end{Proposition}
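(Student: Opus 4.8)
The plan is to handle the two assertions in turn. The identification of $\rmK_{\AA}$ is essentially already contained in the discussion preceding the statement; the formula for the powers will then follow by a short induction resting on a product rule for ``graph'' submodules of the idealization $\rmQ(\RR) \ltimes \rmQ(\RR)$.

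For the first assertion, I would argue as follows. Since $M$ is a finitely generated $\RR$-module, $\AA = \RR \ltimes M$ is module-finite over $\RR$, and so $\Hom_{\RR}(\AA, \rmK_{\RR})$ is a canonical module of $\AA$ by the standard behavior of canonical modules under a module-finite extension (\cite[Theorem 3.3.7]{BH}, \cite{HK2}); since $K \cong \rmK_{\RR}$, the same holds for $\Hom_{\RR}(\AA, K)$. Because $\rmQ(\RR){\cdot}M = \rmQ(\RR)$, the $\AA$-isomorphism $\Hom_{\RR}(\AA, K) \cong (K:M) \times K$ recorded just above the statement then shows that the fractional ideal $L \times K = (K:M) \times K \subseteq \rmQ(\RR) \ltimes \rmQ(\RR)$ is a canonical module of $\AA$. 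In particular $\rmK_{\AA} = L^{1} \times L^{0}K$, which is the case $n = 1$ of the second assertion.

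For the second assertion I would use induction on $n$, the base case $n=1$ being the above. The whole computation takes place inside the ring $T = \rmQ(\RR) \ltimes \rmQ(\RR)$, with multiplication $(f,x)(g,y) = (fg,\, fy+gx)$, and the key point is the identity
\[ (U_{1} \times V_{1})\,(U_{2} \times V_{2}) \;=\; U_{1}U_{2} \,\times\, (U_{1}V_{2} + U_{2}V_{1}) \]
for $\RR$-submodules $U_{i}, V_{i}$ of $\rmQ(\RR)$, the product on the left being taken in $T$. Granting this, and noting that $ML \subseteq K$ (as $L = K:M$), so that each $L^{n} \times L^{n-1}K$ is genuinely an $\AA$-submodule of $T$, the inductive step is
\[ (\rmK_{\AA})^{n+1} = (L^{n} \times L^{n-1}K)(L \times K) = L^{n}L \,\times\, \bigl(L^{n}K + L{\cdot}L^{n-1}K\bigr) = L^{n+1} \times L^{n}K, \]
which completes the induction.

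What remains is the product identity, and this is the only step needing a direct computation; I do not anticipate a real obstacle. The inclusion $\subseteq$ is immediate, since each generator $(u_{1},v_{1})(u_{2},v_{2}) = (u_{1}u_{2},\, u_{1}v_{2}+u_{2}v_{1})$ visibly lies in the right-hand side, which is closed under addition and under the $\AA$-action. For $\supseteq$, I would use that $0$ belongs to every $U_{i}$ and $V_{i}$: then $(u_{1},0)(0,v_{2}) = (0,u_{1}v_{2})$ and $(0,v_{1})(u_{2},0) = (0,u_{2}v_{1})$ lie in the product, and subtracting both from $(u_{1},v_{1})(u_{2},v_{2})$ shows $(u_{1}u_{2},0)$ lies in the product too; summing such elements gives $(p,0)$ for every $p \in U_{1}U_{2}$ and $(0,q)$ for every $q \in U_{1}V_{2}+U_{2}V_{1}$, and adding these yields the full right-hand side. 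The mildly delicate points are only bookkeeping: keeping straight the ambient ring in which products are formed, and checking the $\AA$-module (rather than merely additive) structure of the terms appearing.
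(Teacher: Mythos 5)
Your proposal is correct and follows the same route the paper intends: identify $\Hom_{\RR}(\AA,K)$ as a canonical module of $\AA$ via the behavior of canonical modules under module-finite extensions, use the isomorphism $\Hom_{\RR}(\AA,K)\cong(K:M)\times K$ established just before the statement, and then establish the power formula by induction on $n$ using the product rule $(U_1\times V_1)(U_2\times V_2)=U_1U_2\times(U_1V_2+U_2V_1)$ in $\rmQ(\RR)\ltimes\rmQ(\RR)$. The paper gives essentially no detail ("the assertions follow from the above observations"; "by induction on $n$"), so your write-up simply makes explicit the steps the authors left implicit, including the one worth highlighting: that $ML\subseteq K$ guarantees each $L^n\times L^{n-1}K$ really is an $\AA$-submodule of the ambient idealization, which is what makes the inductive product computation legitimate.
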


\begin{proof} The assertions follow from the above observations. The proof of the equality $(\rmK_{\AA})^n = L^n \times L^{n-1}K$ follows by induction on $n$. \end{proof}

\begin{Theorem}\label{ch1}
Let $(\RR,\m)$ be a $1$-dimensional Cohen-Macaulay local ring with infinite residue field and a canonical ideal $\C$.
Suppose $\RR$ is not a $\mathrm{DVR}$. Then we have the following:
\[ \cdeg(\RR \ltimes \m)= 2 \, \cdeg(\RR)+2 \quad \mbox{\rm and} \quad r(\RR \ltimes \m) = 2 \, r(\RR)+1.\]
\end{Theorem}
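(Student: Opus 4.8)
The plan is to apply Proposition~\ref{canonical-idealization} with $M=\m$ and $K=\C$. Since $\m$ contains a nonzerodivisor we have $\rmQ(\RR){\cdot}\m=\rmQ(\RR)$, and $\C\cong\rmK_\RR$, so $\rmK_\AA=L\times\C$ is a canonical module of $\AA:=\RR\ltimes\m$, where $L:=\C:_{\rmQ(\RR)}\m=\Hom_\RR(\m,\C)$, and $(\rmK_\AA)^n=L^n\times L^{n-1}\C$ for all $n\ge1$; moreover $\rmQ(\AA)=\rmQ(\RR)\ltimes\rmQ(\RR)\cong\rmQ(\RR)[x]/(x^2)$ is Gorenstein, so $\AA$ has a canonical ideal and $\cdeg(\AA)$ is defined, while $r(\AA)=\nu_\AA(\rmK_\AA)$. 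Thus the whole statement reduces to understanding the fractional ideal $L$.

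First I would record the facts about $L$, all consequences of the duality $\Hom_\RR(-,\C)$ on maximal Cohen-Macaulay modules (recall $\C=\rmK_\RR$) together with the following principle: if $N'\subseteq N$ are fractional ideals with $N$ maximal Cohen-Macaulay and $\l_\RR(N/N')<\infty$, then from $0\to\C:_{\rmQ}N\to\C:_{\rmQ}N'\to\Ext^1_\RR(N/N',\C)\to\Ext^1_\RR(N,\C)=0$ and $\l(\Ext^1_\RR(N/N',\C))=\l(N/N')$ one gets $\l_\RR(N/N')=\l_\RR\big((\C:_{\rmQ}N')/(\C:_{\rmQ}N)\big)$; in particular $N=N'$ iff $\C:_{\rmQ}N=\C:_{\rmQ}N'$. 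Concretely: \textbf{(a)} $L/\C=\Hom_\RR(\RR/\m,\RR/\C)$ is the socle of $\RR/\C$; since $\Hom_\RR(\C,\C)=\RR$ forces $\Ext^1_\RR(\RR/\C,\C)\cong\RR/\C$, i.e. $\RR/\C$ is an Artinian Gorenstein ring, its socle is simple, so $\l_\RR(L/\C)=1$ and $L\subseteq\RR$. \textbf{(b)} As $\m$ is maximal Cohen-Macaulay, reflexivity gives $\C:_{\rmQ}L=\Hom_\RR(L,\C)=\Hom_\RR(\Hom_\RR(\m,\C),\C)=\m$. \textbf{(c)} Applying the principle to $\m\C\subseteq\m L$: $\C:_{\rmQ}(\m\C)=\{q:q\m\subseteq\C:_{\rmQ}\C\}=\RR:_{\rmQ}\m$ and $\C:_{\rmQ}(\m L)=\{q:q\m\subseteq\C:_{\rmQ}L\}=\m:_{\rmQ}\m$ by (b), so $\m L=\m\C$ iff $\Hom_\RR(\m,\m)=\Hom_\RR(\m,\RR)$; the inclusion $\supseteq$ here fails exactly when some $q$ satisfies $q\m=\RR$, i.e. when $\m$ is invertible, i.e. when $\RR$ is a $\mathrm{DVR}$, so the hypothesis gives $\m L=\m\C$. \textbf{(d)} Similarly $L^2=\C L$ iff $\C:_{\rmQ}(L^2)=\C:_{\rmQ}(\C L)$, i.e. $\m:_{\rmQ}L=\RR:_{\rmQ}L$, i.e. $L$ is not invertible; and $L$ cannot be invertible, for then (b) would give $\m=\C:_{\rmQ}L=L^{-1}\C$, whence $\m L=\C$, contradicting $\m L=\m\C$ from (c) and Nakayama. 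So $\C$ is a reduction of $L$ and $\e_0(L)=\e_0(\C)$.

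Given these, $r(\AA)$ is immediate: with $\n=\m\times\m$ the maximal ideal of $\AA$, a direct computation gives $\n{\cdot}\rmK_\AA=\m L\times\m L$, which equals $\m\C\times\m\C$ by (c); hence $r(\AA)=\nu_\AA(\rmK_\AA)=\dim_k\big((L/\m\C)\oplus(\C/\m\C)\big)=\big(\l(L/\C)+\l(\C/\m\C)\big)+\l(\C/\m\C)=(1+r(\RR))+r(\RR)=2r(\RR)+1$. For $\cdeg(\AA)$, pick a minimal reduction $(\ell_0)$ of the $\m$-primary ideal $L$ (using that $\RR/\m$ is infinite). One checks $(\ell_0,0)\in\rmK_\AA$ and $(\ell_0,0)\AA\subseteq\rmK_\AA$ (since $\ell_0\in L=\C:\m$), and that $(\ell_0,0)(\rmK_\AA)^n=(\rmK_\AA)^{n+1}$ for $n\gg0$ because $\ell_0L^{n-1}=L^n$ eventually; thus $(\ell_0,0)\AA$ is a minimal reduction of $\rmK_\AA$. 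Transporting the formula of Theorem~\ref{gencdeg1} expressing $\cdeg$ as the colength of a canonical ideal in its minimal reduction through an isomorphism of $\rmK_\AA$ with an honest canonical ideal of $\AA$ yields $\cdeg(\AA)=\l_\AA\big(\rmK_\AA/(\ell_0,0)\AA\big)=\l_\RR(L/\ell_0\RR)+\l_\RR(\C/\ell_0\m)$. Finally $\l_\RR(L/\ell_0\RR)=\e_0((\ell_0))-\l(\RR/L)$ and $\l_\RR(\C/\ell_0\m)=\e_0((\ell_0))+1-\l(\RR/\C)$, so using $\e_0((\ell_0))=\e_0(L)=\e_0(\C)$ from (d) and $\l(\RR/L)=\l(\RR/\C)-1$ from (a) gives $\cdeg(\AA)=2\e_0(\C)-2\l(\RR/\C)+2=2\cdeg(\RR)+2$.

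The hard part is the package (a)--(d), and within it the exact role of ``$\RR$ is not a $\mathrm{DVR}$'': everything rests on the finite-length $\Ext^1$-principle and the reflexivity identity $\Hom_\RR(\Hom_\RR(\m,\C),\C)=\m$, and the $\mathrm{DVR}$ case is precisely where $\m$ (equivalently $L$) becomes invertible and (c)--(d) break. Once (a)--(d) are in place, the remainder is bookkeeping with lengths and multiplicities; the only other point needing care is the legitimacy of computing $\cdeg(\AA)$ and its minimal reduction on the fractional representative $\rmK_\AA$ rather than on an honest canonical ideal, which is handled by isomorphism-invariance of reductions and colengths together with the independence statements already available.
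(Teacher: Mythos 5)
Your proposal is correct, and it arrives at the same answer via essentially the same skeleton as the paper (use Proposition~\ref{canonical-idealization} to realize $\rmK_\AA = L\times\C$ with $L=\C:\m$, then do the length bookkeeping), but the two proofs differ in how the key facts about $L$ are obtained and in how the reduction is chosen. The paper simply cites \cite[Lemma 3.6]{CHV} for both $L^2=\C L$ and $\m L=\m\C$, noting $\nu(\m/\C)\ge 2$ because $\RR$ is not a DVR; you instead derive these, together with $\l(L/\C)=1$, entirely from canonical duality: the length-reflecting $\Ext^1$ principle, reflexivity $\C:_\rmQ(\C:_\rmQ\m)=\m$, and the observation that the failures of (c) and (d) force $\m$, respectively $L$, to be invertible, which the non-DVR hypothesis rules out. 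That is a self-contained and arguably cleaner explanation of where the hypothesis enters, and it identifies (c) and (d) as the two conditions that become false at a DVR. A second difference: the paper normalizes $\C\subseteq\m^2$ (replacing $\C$ by $b\C$) so that $L\subseteq\m$ and $\mathcal D=L\times\C$ is an honest ideal of $\AA$, whereas you work with the fractional model $\rmK_\AA$ and appeal to isomorphism-invariance of colengths and reductions. That is legitimate, but worth noting that the paper's normalization is the cheaper way to make $\mathcal D$ literally an ideal. Third, the paper picks $a\in\C$ a minimal reduction of $\C$ (which is automatically a reduction of $L$ since $L^2=\C L$), so $\l(\C/a\m)$ expands via $a\RR\subseteq\C$; you pick a minimal reduction $\ell_0$ of $L$ (not necessarily in $\C$), which forces the extra step $\e_0((\ell_0))=\e_0(L)=\e_0(\C)$ and a computation of $\l(\C/\ell_0\m)$ via $\l(\RR/\ell_0\m)$. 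Both routes close the computation; the paper's is a line shorter. On balance your (a)--(d) package is a genuine, reference-free alternative to the two CHV citations, and everything after it matches the paper's accounting up to the choice of reduction.
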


\begin{proof} We may assume that $\C\subset \m^2$ by replacing $\C$ with $b\C$ if necessary, where $b \in \m^2$ is a non-zerodivisor of $\RR$.  Let $\AA=\RR \ltimes \m$ and set $L=\C:\m$.
Then by Proposition \ref{canonical-idealization}
$\mathcal{D} = L \times  \C$ is a canonical ideal of $\AA$ (notice that $L \subset \m$, since $\C \subset \m^2$).  Let $a \in \C$ and assume that $(a)$ is a reduction of $\C$. Then since $L^2 = \C L$ by \cite[Lemma 3.6 (a)]{CHV} (notice that $\nu(\m/\C) \ge 2$, since $\RR$ is not a DVR), the ideal $(a)$ is also a reduction of $L$, so that $a\AA$ is a minimal reduction of the canonical ideal $\mathcal{D}$. Hence
\[ \cdeg(\AA) = \lambda(\mathcal{D}/a\AA) =\lambda(L/a\RR) + \lambda(\C/a\m).\]
Because  $\lambda(L/\C)=\lambda(\RR/\m)=1$, we get
\[\lambda(L/a\RR) + \lambda(\C/a\m) = [\lambda(L/\C) + \lambda(\C/a\RR)] + [\lambda (\C/a\RR) +\lambda(a\RR/a\m)] =  2\lambda(\C/a\RR) + 2.\]
Thus ${\ds \cdeg(\AA)  = 2 \, \cdeg(\RR) + 2}$. Notice that $\n=\m \times \m$ is the maximal of $\AA$. Since $\lambda(L/\C)=1$ and $\m \C = \m L$ by \cite[Lemma 3.6 (b)]{CHV}, we have
\[ \begin{array}{rcl}
{\ds r (\AA) = \l(\mathcal{D}/\n \mathcal{D} ) = \l((\C/\m \C) \oplus (L/\m L))} &=& {\ds \l(\C/\m \C) + \l(L/\m \C)} \\
 &=& {\ds \l(\C/\m \C) + [\l(L/\C) + \l(\C/ \m \C)] = 2 \, r(\RR) + 1}
 \end{array}\]
as claimed.
\end{proof}

\begin{Corollary}{\rm (\cite[Theorem 6.5]{GMP11})} Let $(\RR,\m)$ be a $1$-dimensional Cohen-Macaulay local ring with a canonical ideal. Then
$\RR$ is an almost Gorenstein ring if and only if $\RR \ltimes \m$ is an almost Gorenstein ring.
\end{Corollary}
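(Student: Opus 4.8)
The plan is to derive the corollary from the numerical criterion of Proposition~\ref{1dimalmostg} together with the two identities of Theorem~\ref{ch1}. As a preliminary step, I would reduce to the case of an infinite residue field: the canonical degree and the type are unaffected by passing from $\RR$ to $\RR(x) = \RR[x]_{\m\RR[x]}$ (the canonical ideal extends to one by the base-change lemma of Section~6, $\cdeg(\RR(x)) = \cdeg(\RR)$, and $r(\RR(x)) = \nu(\C \otimes \RR(x)) = \nu(\C) = r(\RR)$), and $(\RR\ltimes\m)(x)$ is precisely $\RR(x)\ltimes\m\RR(x)$, the same idealization construction over $\RR(x)$; hence, by Proposition~\ref{1dimalmostg}, the almost Gorenstein property of both $\RR$ and $\RR\ltimes\m$ is insensitive to this replacement. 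I would also note that $\AA = \RR\ltimes\m$ is again a $1$-dimensional Cohen-Macaulay local ring with a canonical ideal: dimension and Cohen-Macaulayness are preserved by idealization, $\rmQ(\AA) = \rmQ(\RR)\ltimes\rmQ(\RR)$ is Gorenstein because $\rmQ(\RR)$ is an Artinian Gorenstein ring, and Proposition~\ref{canonical-idealization} together with the harmless reduction $\C\subset\m^2$ used in the proof of Theorem~\ref{ch1} realizes a canonical module of $\AA$ as an ideal. So Proposition~\ref{1dimalmostg} applies to $\AA$ as well.

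Next I would dispose of the case where $\RR$ is a $\mathrm{DVR}$, which Theorem~\ref{ch1} excludes. Then $\RR$ is Gorenstein, hence almost Gorenstein; moreover $\m$ is principal, so $\m \cong \RR \cong \rmK_{\RR}$ and $\RR\ltimes\m \cong \RR\ltimes\rmK_{\RR}$ is Gorenstein, hence also almost Gorenstein. So both conditions hold and the equivalence is trivial in this case.

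Now assume $\RR$ is not a $\mathrm{DVR}$ and has infinite residue field. Theorem~\ref{ch1} gives $\cdeg(\RR\ltimes\m) = 2\,\cdeg(\RR) + 2$ and $r(\RR\ltimes\m) = 2\,r(\RR) + 1$, and then Proposition~\ref{1dimalmostg} (applied once to $\RR$ and once to $\AA = \RR\ltimes\m$) reduces everything to the elementary equivalence
\[
\cdeg(\RR\ltimes\m) = r(\RR\ltimes\m) - 1 \iff 2\,\cdeg(\RR) + 2 = 2\,r(\RR) \iff \cdeg(\RR) = r(\RR) - 1 .
\]
Since all the needed results are already available, there is no real obstacle; the only steps that require a modicum of care are checking that the idealization $\RR\ltimes\m$ meets the hypotheses of Proposition~\ref{1dimalmostg} (in particular that it has a canonical ideal) and the separate treatment of the $\mathrm{DVR}$ boundary case.
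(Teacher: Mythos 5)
Your proof is correct and follows exactly the paper's route: the paper's proof of this corollary is the one-line observation that it follows from Theorem~\ref{ch1} and Proposition~\ref{1dimalmostg}, which is precisely the numerical equivalence $\cdeg(\RR\ltimes\m)=r(\RR\ltimes\m)-1 \iff \cdeg(\RR)=r(\RR)-1$ you spell out. The additional care you take with the reduction to infinite residue field, the DVR boundary case, and the verification that $\RR\ltimes\m$ admits a canonical ideal are all details the paper leaves implicit but which are indeed needed to invoke Proposition~\ref{1dimalmostg} and Theorem~\ref{ch1}.
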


\begin{proof} It follows from Theorem~\ref{ch1} and Proposition~\ref{1dimalmostg}.
\end{proof}





\begin{Question}{\rm
Let $(\RR, \m)$ be a $1$-dimensional Cohen-Macaulay local ring with a canonical ideal.
(i) Is it true that $\rho(\RR)= \rho(\RR \ltimes \m)$?  (ii) Are the roots of $\RR \ltimes \m$ related to the roots of $\RR$?
}\end{Question}




\subsubsection*{Hyperplane sections}
 A change of rings issue is the  comparison $\cdeg(\RR)$ to $\cdeg(\RR/(x))$ for an
appropriate regular element $x$. We know that if $\C$ is a canonical module for $\RR$ then
$\C/x \C$ is a canonical module for $\RR/(x)$ with the same number of generators, so type is preserved under
specialization. However $\C/x \C$ may not be isomorphic to an ideal of $\RR/(x)$. Here is a case of good behavior. Suppose
$x$ is regular modulo $\C$. Then for the sequence
\[ 0 \rar \C \lar \RR \lar \RR/\C \rar 0,\]
we get the exact sequence
\[ 0 \rar \C/x\C  \lar \RR/(x) \lar \RR/(\C,x) \rar 0,\] so the canonical module
$\C/x\C$ embeds in $\RR/(x)$.
 Note that this leads $\red(\C) \geq \red(\C/x\C)$.

 \medskip

\begin{Proposition}\label{hscdeg} Suppose $(\RR, \m)$ is a Cohen-Macaulay local ring of dimension $d \geq 2$ that has a canonical ideal $\C$. Suppose that $\C$ is equimultiple and  $x$ is regular modulo $\C$. Then
\[ \cdeg(\RR) \leq \cdeg(\RR/(x)).\]
\end{Proposition}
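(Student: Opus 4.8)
The plan is to reduce to the case $d=2$, since after killing a suitable regular sequence we may localize the canonical-degree computation at height-one primes. More precisely, write $\cdeg(\RR)=\sum_{\height\p=1}\cdeg(\RR_\p)\deg(\RR/\p)$ as in Theorem~\ref{gencdeg1}, and similarly for $\RR/(x)$, running over height-one primes $\q$ of $\RR/(x)$, i.e.\ over height-two primes $\p$ of $\RR$ containing $x$. The key observation is that $x$ being regular modulo $\C$ means $\C_\p$ stays a canonical ideal of $\RR_\p$ with $\C_\p/x\C_\p$ a canonical ideal of $(\RR/(x))_\p=\RR_\p/(x)$, and $\RR_\p$ has dimension $2$. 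So it suffices to prove the inequality when $d=2$ and then account for the multiplicities $\deg(\RR/\p)$ versus $\deg((\RR/(x))/\q)$.

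Assume then $d=2$. Since $\C$ is equimultiple, let $(a_1,a_2)$ be a minimal reduction of $\C$ with $x$ among a system of parameters; one can arrange $x=a_2$ after passing to $\RR(t)$ (harmless by the opening remark of Proposition~\ref{cdeg}, as $\cdeg$ and all lengths are unchanged). Then $\cdeg(\RR)=\deg(\C/(a_1,a_2))=\e_0(\m;\C/(a_1,x))$ by Theorem~\ref{gencdeg1}, while $\cdeg(\RR/(x))=\deg\big((\C/x\C)/\overline{(a_1)}\big)$ where $\overline{(a_1)}$ is the image of $(a_1)$, which is a minimal reduction of $\C/x\C$ (its image generates a reduction since $(a_1,x)$ reduces $\C$). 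So I must compare $\e_0(\m;\C/(a_1,x))$ with $\lambda\big((\C/x\C)/a_1(\C/x\C)\big)=\lambda\big(\C/(a_1\C+x\C)\big)$. The first equals $\lambda\big(\C/(a_1\C+x\C+ \text{higher})\big)$ type data via the associativity/Hilbert-polynomial formula; concretely, $\e_0(\m;\C/(a_1,x))$ is the leading coefficient of the Hilbert function of $\C/(a_1,x)$ as a module over $\RR/(a_1,x)$, whereas $\lambda(\C/(a_1\C+x\C))$ is a single length. The inequality $\cdeg(\RR)\le\cdeg(\RR/(x))$ should follow from the standard fact that, for a $1$-dimensional Cohen-Macaulay module, $\e_0\le\lambda$ of the module modulo a minimal reduction, applied to $\C/x\C$ over $\RR/(x)$: here $\cdeg(\RR/(x))=\lambda\big((\C/x\C)/a_1(\C/x\C)\big)\ge\e_0\big(a_1;\C/x\C\big)$, and $\e_0(a_1;\C/x\C)=\e_0(\m;\C/(a_1,x))=\cdeg(\RR)$ by reduction of the parameter ideal. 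Thus the chain is $\cdeg(\RR)=\e_0(\m;\C/(a_1,x))=\e_0(a_1;\C/x\C)\le\lambda\big((\C/x\C)/a_1(\C/x\C)\big)=\cdeg(\RR/(x))$.

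The step I expect to be the main obstacle is justifying that $(a_1)$ (the image of a single element) is genuinely a minimal reduction of the canonical ideal $\C/x\C$ of $\RR/(x)$, and that its reduction multiplicity $\e_0$ coincides with $\e_0(\m;\C/(a_1,x))$ — in other words, controlling how the equimultiplicity of $\C$ descends. For this I would invoke that $x$ regular modulo $\C$ forces $x$ to be a superficial-type element, so $(a_1,x)$ being a reduction of $\C$ (relative to $\RR$) implies $(\overline{a_1})$ is a reduction of $\overline\C=\C/x\C$ (relative to $\RR/(x)$), with the associated Hilbert polynomials related by the usual one-step specialization $\Delta$; then the $\e_0$'s match. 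Once that bookkeeping is done, the inequality $\e_0\le\lambda\pmod{\text{minimal reduction}}$ for the Cohen-Macaulay module $\C/x\C$ over the $1$-dimensional ring $\RR/(x)$ is routine (it is the $1$-dimensional case underlying Corollary~\ref{cdegr1}), and reassembling over all height-two primes $\p\supseteq(x)$ via the associativity formula, together with $\deg(\RR/\p)=\deg((\RR/(x))/\p)$, completes the argument.
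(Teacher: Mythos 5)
Your proposal contains a genuine gap, and in fact the chain of equalities you write down would prove the wrong statement.

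First, two preliminary issues. (i) Since $\C$ is equimultiple of height one, its minimal reduction is \emph{principal}, $(a)$, not $(a_1,a_2)$; this is exactly how Theorem~\ref{gencdeg1} is stated: $\cdeg(\RR)=\deg(\C/(a))=\e_0(\m,\C/(a))$, where $\C/(a)$ is a $(d-1)$-dimensional module. Writing a two-generated ``minimal reduction'' and then putting $x=a_2$ conflates the reduction of $\C$ with a system of parameters, and makes the first displayed equality $\cdeg(\RR)=\deg(\C/(a_1,a_2))$ incorrect. (ii) The attempted reduction to $d=2$ by localization does not work: $\cdeg(\RR)$ is a sum over height-one primes of $\RR$, while $\cdeg(\RR/(x))$ is a sum over height-one primes of $\RR/(x)$, i.e.\ height-\emph{two} primes of $\RR$ containing $x$. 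These are disjoint sets of primes, so there is no term-by-term comparison to localize. (The paper's argument works directly in all dimensions $d\geq 2$, so this reduction is also unnecessary.)

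The decisive error is the middle of your chain: $\cdeg(\RR)=\e_0(\m;\C/(a_1,x))=\e_0(a_1;\C/x\C)$. Note that $\C$ is maximal Cohen--Macaulay, $x$ is a nonzerodivisor on $\C$, and $\C/x\C$ is maximal Cohen--Macaulay over $\RR/(x)$; hence $\bar a$ is a nonzerodivisor on $\C/x\C$ and $\e_0(\bar a;\C/x\C)=\lambda\big((\C/x\C)/\bar a(\C/x\C)\big)$ \emph{exactly}, not merely $\leq$. Combined with your asserted equality this would force $\cdeg(\RR)=\cdeg(\RR/(x))$ for every $x$ regular modulo $\C$, which is strictly stronger than the Proposition and is not true in general -- Question~\ref{Bertinicdeg} asks precisely when one can achieve equality. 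The correct reading is: $\cdeg(\RR)=\e_0(\m;\C/(a))$ is the multiplicity of a $(d-1)$-dimensional module $\C/(a)$, whereas $\cdeg(\RR/(x))=\deg\big((\C/(a))\otimes \RR/(x)\big)$ (a direct computation with $\bar\C=(\C+(x))/(x)$ and $(\bar a)=((a)+(x))/(x)$ shows $\bar\C/(\bar a)\cong\C/((a)+x\C)=(\C/(a))\otimes\RR/(x)$). The inequality then comes from the standard fact that multiplicity does not decrease when passing to a hyperplane section of a module, $\deg(M)\leq\deg(M/xM)$ for a parameter $x$ of $M$ -- this is exactly the appeal to Flenner--Vogel in the paper's proof -- applied to $M=\C/(a)$. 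In other words, the place where you assert an equality $\e_0(\m;\C/(a_1,x))$ versus $\cdeg(\RR)$ is precisely where the one genuine inequality of the statement occurs.
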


\begin{proof}
Let $(a)$
be a minimal reduction of $\C$. Then its image in $\RR/(x)$ is a minimal reduction of $\C/x\C$. Since $x$ does
not belong to any minimal primes of $\C$, which are the same as the minimal primes of $(a)$,
the sequence $a,x$ is a regular sequence of $\RR$. Therefore, so is $x, a$. Thus
by
 \cite[Theorem 3.2]{FW93}
\[
\cdeg(\RR/(x))= \deg(\C/(x\C)\otimes \RR/(a)) = \deg(\C/(a,x)\C) =
\deg(\C/(a) \otimes \RR/(x))\geq \cdeg(\RR).\]
\end{proof}

\begin{Question}\label{Bertinicdeg}{\rm
 Another question is when there exists $x$ such that $\cdeg(\RR) = \cdeg(\RR/(x))$ in $2$ cases: (i) $\C$ is equimultiple and (ii) $\C$ is not necessarily equimultiple. It would be a Bertini type theorem.
 }\end{Question}

\section{Relative canonical degrees: Extensions/Variations }

\noindent It is clear that the definition of canonical degree (see Theorem~\ref{gencdeg1}) does not take into account deeper properties of
$\RR$. We have seen this in the case of normal rings when  $\cdeg(\RR) = 0$ is all we get, no additional information about
$\RR$ is forthcoming--except if $\C$ is equimultiple.
At a minimum, we would like to say that if $\RR$ is Cohen-Macaulay then $\cdeg(\RR) = 0$ if and only if
$\RR$ is Gorenstein. Here are some  proposals, beginning with a generalization of  Proposition~\ref{cdeg}.


\begin{Definition}\label{cdeg2}{\rm Let $(\RR, \m)$ be a Cohen-Macaulay local ring of dimension $d \geq 1$ that admits a canonical ideal $\C$.
 Let $G = \gr_{\C}(\RR) = \bigoplus_{n\geq 0} \C^n/\C^{n+1} $ be the associated graded ring of $\C$, and
$M_{\C} = (\m, G_{+})$ its maximal irrelevant ideal. The {\em canonical degree}$^*$ of $\RR$ is the integer
\[ \cdeg^{*}_{\C}(\RR) := \e_0(M_{\C}, \gr_{\C}(\RR)) - \deg(\RR/\C) =\deg(\gr_{\C}(\RR)) - \deg(\RR/\C)
.\]
}\end{Definition}

Before we discuss cases, let us recall some elementary facts about the calculation of multiplicities.

\begin{Proposition}\label{degIQ}
Let $(\RR,\m)$ be a Cohen-Macaulay local ring of dimension $d \geq 1$ and let $I$ be an ideal of positive
codimension. Let $Q$ be  a reduction of $I$.

\begin{enumerate}
\item[{\rm (1)}] If $a$ is a regular element, then ${\ds  \deg(I/aI) = \deg(\RR/(a))}$.

\item[{\rm (2)}] $\deg(\gr_I(\RR)) = \deg(\gr_Q(\RR))$.

\item[{\rm (3)}] If $I$ is equimultiple and $Q$ is generated by a regular sequence, then
$\deg(\gr_I(\RR)) = \deg(\RR/Q).$
\end{enumerate}
\end{Proposition}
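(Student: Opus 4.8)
The plan is to treat the three parts in turn, with (1) and (3) short and (2) carrying the weight.

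For (1) I would compare both $I/aI$ and $\RR/(a)$ to the auxiliary module $\RR/aI$. Since $a$ is a nonzerodivisor, multiplication by $a$ is an $\RR$-isomorphism $\RR/I \cong a\RR/aI$, and the inclusion $I \subseteq \RR$ produces two short exact sequences
\[ 0 \to I/aI \to \RR/aI \to \RR/I \to 0 \qquad\text{and}\qquad 0 \to \RR/I \to \RR/aI \to \RR/(a) \to 0, \]
the second being $0 \to a\RR/aI \to \RR/aI \to \RR/(a) \to 0$ rewritten through the isomorphism above. Now $\RR/aI$, $I/aI$, $\RR/(a)$ all have dimension $d-1$ and $\RR/I$ has dimension at most $d-1$, so additivity of the $(d-1)$-dimensional multiplicity on these two sequences (interpreting it as $0$ on modules of smaller dimension) gives $\deg(I/aI) + \deg(\RR/I) = \deg(\RR/aI) = \deg(\RR/I) + \deg(\RR/(a))$, whence $\deg(I/aI) = \deg(\RR/(a))$.

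For (2) the key point is that a reduction has the same radical as the ideal it reduces, so $\Min(Q) = \Min(I)$, and for each $\q \in \Min(I)$ the localization $I\RR_\q$ is $\q\RR_\q$-primary with reduction $Q\RR_\q$. I would first establish the associativity-type formula
\[ \deg(\gr_J(\RR)) = \sum_{\q \in \Min(J)} \e_0(J\RR_\q,\RR_\q)\,\deg(\RR/\q) \]
for an arbitrary ideal $J$ of positive height. Granting it, (2) is immediate: the sums for $J = I$ and $J = Q$ range over the same set of primes, and term by term $\e_0(I\RR_\q,\RR_\q) = \e_0(Q\RR_\q,\RR_\q)$ because $Q\RR_\q$ is a reduction of $I\RR_\q$. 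To prove the formula I would apply the associativity formula for Hilbert--Samuel multiplicity to the standard graded ring $\gr_J(\RR)$ relative to its graded maximal ideal $M_J = \m\,\gr_J(\RR) + \gr_J(\RR)_+$: the minimal primes of $\gr_J(\RR)$ of top dimension $d$ lie over the minimal primes $\q$ of $J$ (here one uses that $\RR$ is catenary and equidimensional, so $\height\q + \dim\RR/\q = d$ for every such $\q$), and localizing $\gr_J(\RR)$ at $\q$ gives $\gr_{J\RR_\q}(\RR_\q)$, whose multiplicity at its own irrelevant maximal ideal equals $\e_0(J\RR_\q,\RR_\q)$ since $J\RR_\q$ is primary to the maximal ideal. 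Reassembling these local contributions against the multiplicities $\deg(\RR/\q)$ in the base yields the displayed sum; this bookkeeping for a graded ring over a non-Artinian base is the main obstacle. An alternative route is to use the module-finite inclusion of Rees algebras $\RR[Q\TT] \hookrightarrow \RR[I\TT]$, which is birational above each minimal prime of $\RR$, together with the invariance of multiplicity under finite birational extensions. As a sanity check, when $J = (a)$ with $a$ regular one has $\gr_{(a)}(\RR) \cong (\RR/(a))[T]$ and the formula collapses to the usual associativity formula for $\deg(\RR/(a))$.

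Finally, (3) follows by combining (2) with the standard fact that if $Q = (a_1,\dots,a_s)$ is generated by a regular sequence then $\gr_Q(\RR) \cong (\RR/Q)[T_1,\dots,T_s]$, a polynomial ring over $\RR/Q$; adjoining polynomial variables does not change the multiplicity with respect to the graded maximal ideal, so $\deg(\gr_Q(\RR)) = \deg(\RR/Q)$, and then (2) gives $\deg(\gr_I(\RR)) = \deg(\gr_Q(\RR)) = \deg(\RR/Q)$.
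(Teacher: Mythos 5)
Your argument for (1) is the same as the paper's: both compare $I/aI$ and $\RR/(a)$ to $\RR/aI$ via the same two short exact sequences, and additivity of degree does the rest. Likewise your (3) follows the paper once (2) is in place, using $\gr_Q(\RR)\cong(\RR/Q)[T_1,\dots,T_s]$.

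For (2) your route is genuinely different, and it is worth comparing. The paper works with the extended Rees algebras $\RR[Qt,t^{-1}]\hookrightarrow\RR[It,t^{-1}]$ with cokernel $L$, observes that $t^{-1}$ acts nilpotently on $L$ because $Q$ is a reduction of $I$, and applies the snake lemma for multiplication by $t^{-1}$ to obtain
\[
0\rar L_1\lar \gr_Q(\RR)\lar \gr_I(\RR)\lar L_2\rar 0
\]
with $\dim L_1=\dim L_2=\dim L$ and $\deg(L_1)=\deg(L_2)$ (both from the nilpotent action), so additivity of multiplicity gives $\deg(\gr_Q(\RR))=\deg(\gr_I(\RR))$. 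This is short, self-contained, and uses only that $Q$ is a reduction. Your route instead aims to establish the stronger associativity-type formula
\[
\deg(\gr_J(\RR))=\sum_{\q\in\Min(J)}\e_0(J\RR_\q,\RR_\q)\,\deg(\RR/\q),
\]
from which (2) falls out by localizing at the common minimal primes of $I$ and $Q$. That formula is correct under the Cohen--Macaulay hypothesis (equidimensional and catenary, so $\height\q+\dim\RR/\q=d$), and it buys you a more conceptual and more general statement, but it carries a real cost you yourself flag: the associativity formula for a graded ring over a non-Artinian base is not an off-the-shelf application of the local associativity formula, and identifying the top-dimensional minimal primes of $\gr_J(\RR)$ together with the local multiplicities at them is precisely the technical content that the paper's snake-lemma argument bypasses. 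Your proposed alternative, a module-finite birational inclusion $\RR[Q\TT]\hookrightarrow\RR[I\TT]$ and invariance of multiplicity under finite birational maps, also needs care: the invariance one wants is for the associated graded rings, not the Rees algebras themselves, and passing from $\deg(\RR[It])$ to $\deg(\gr_I(\RR))$ is exactly what forces the extended Rees algebra $\RR[It,t^{-1}]$ into the picture, i.e., the paper's argument. So your strategy is sound in outline and more ambitious in scope, but as written the central lemma is asserted rather than proved, whereas the paper's snake-lemma argument is complete and shorter.
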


\begin{proof}
(1) 
Consider the two exact sequences of modules 
\[ 0\rar I/aI \lar \RR/aI \lar \RR/I \lar 0,\]
\[ 0\rar (a)/aI=\RR/I \lar \RR/aI \lar \RR/(a) \lar 0.\]
These yield
\[ \deg(\RR/aI) = \deg(I/aI) + \deg(\RR/I) = \deg(\RR/I) + \deg(\RR/(a)).\]
(2) and (3)  Consider the embedding
\[ 0 \rar
\AA=\RR[Qt, t^{-1}] \lar
\BB= \RR[It, t^{-1}] \lar
L \rar 0.\]
Then $L$ is a module over $\AA$ of dimension $\leq d$. We consider the snake diagram defined by
multiplication by $t^{-1}$:
\[ 0 \rar L_1
\lar \AA/t^{-1}\AA = \gr_ Q(\RR)
\lar \BB/t^{-1}\BB = \gr_ I(\RR) \lar L_2 \rar 0
\]
where $L_1$ and $L_2$ arise from
\[ 0 \rar L_1 \lar L \stackrel{t^{-1}}{\lar} L \lar L_2\rar 0,\]
which is a nilpotent action. Note that $\dim L = \dim L_1 = \dim L_2$,
and therefore $\deg(L_1) = \deg(L_2)$.
The calculation of multiplicities then gives $\deg(\gr_Q(\RR)) = \deg(\gr_I(\RR))$ in general, and
$\deg(\gr_Q(\RR)) = \deg(\RR/Q)$ in the equimultiple case.
\end{proof}


\begin{Corollary}
Let $(\RR, \m)$ be a Cohen-Macaulay local ring of dimension $d \geq 1$ that admits a canonical ideal $\C$. If $\C$ is equimultiple, then $\cdeg^{*}_{\C}(\RR)  = \cdeg(\RR)$. In particular, $\cdeg^{*}_{\C}(\RR)$ is independent of the choice of $\C$.
\end{Corollary}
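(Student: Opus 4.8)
The plan is to express both $\cdeg^{*}_{\C}(\RR)$ and $\cdeg(\RR)$ in terms of the single quantity $\deg(\RR/(a))$, where $(a)$ is a minimal reduction of $\C$, and then to read off the equality. I may assume that $\RR$ has infinite residue field: passing from $\RR$ to $\RR(t)=\RR[x]_{\m\RR[x]}$ changes neither $\deg(\gr_{\C}(\RR))$, nor $\deg(\RR/\C)$, nor $\cdeg(\RR)$, and a canonical ideal of $\RR$ extends to one of $\RR(t)$. Since $\RR$ is Cohen-Macaulay of dimension $d\ge 1$ and $\C$ is a canonical ideal, either $\C=\RR$ (then $\C$ is principal, generated by a regular element), or $\height \C=1=\grade \C$; in either case I can choose a minimal reduction $(a)$ of $\C$ with $a$ a non-zerodivisor, so that $(a)$ is generated by a regular sequence of length one.

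First I would compute $\deg(\gr_{\C}(\RR))$. Because $\C$ is equimultiple and $(a)$ is generated by a regular sequence, Proposition~\ref{degIQ}(3) applies with $I=\C$ and $Q=(a)$ and gives $\deg(\gr_{\C}(\RR)) = \deg(\RR/(a))$. Hence
\[ \cdeg^{*}_{\C}(\RR) = \deg(\gr_{\C}(\RR)) - \deg(\RR/\C) = \deg(\RR/(a)) - \deg(\RR/\C). \]

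Next I would relate the right-hand side to $\cdeg(\RR)$. Since $(a)\subseteq\C\subseteq\RR$, there is an exact sequence
\[ 0 \rar \C/(a) \lar \RR/(a) \lar \RR/\C \rar 0, \]
in which every module has dimension at most $d-1$ (for $\C/(a)$ this uses that $\C$ is equimultiple of height $1$, so that $\C/(a)$ is supported on $V(\C)$). Additivity of the multiplicity function on short exact sequences of modules of dimension $\le d-1$ — the same fact already used in the proof of Proposition~\ref{degIQ}(1) — then yields $\deg(\RR/(a)) = \deg(\C/(a)) + \deg(\RR/\C)$, so that $\cdeg^{*}_{\C}(\RR) = \deg(\C/(a))$. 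By Theorem~\ref{gencdeg1}, $\deg(\C/(a)) = \cdeg(\RR)$, which proves $\cdeg^{*}_{\C}(\RR) = \cdeg(\RR)$. The last assertion is then immediate, since $\cdeg(\RR)$ is independent of the chosen canonical ideal by Proposition~\ref{cdeg} and Theorem~\ref{gencdeg1}.

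The argument is essentially bookkeeping of multiplicities, so there is no serious obstacle; the one point that genuinely needs the hypothesis is that $\gr_{\C}(\RR)$ is badly behaved in general, and it is exactly equimultiplicity — funneled through Proposition~\ref{degIQ}(3) — that lets us trade $\deg(\gr_{\C}(\RR))$ for the tractable $\deg(\RR/(a))$. A minor secondary point is making sure the degree additivity is invoked for modules of the correct dimension, and here too equimultiplicity is what guarantees $\dim \C/(a)\le d-1$.
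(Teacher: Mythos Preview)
Your argument is correct and follows essentially the same route as the paper: use Proposition~\ref{degIQ}(3) to rewrite $\deg(\gr_{\C}(\RR))$ as $\deg(\RR/(a))$, and then invoke Theorem~\ref{gencdeg1}. The only difference is that you spell out the intermediate additivity step $\deg(\RR/(a))-\deg(\RR/\C)=\deg(\C/(a))$ via the short exact sequence, whereas the paper passes directly from $\deg(\RR/(a))-\deg(\RR/\C)$ to $\cdeg(\RR)$ in one line citing Theorem~\ref{gencdeg1}.
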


\begin{proof} Let $(a)$ be a minimal reduction of $\C$. Then by Proposition~\ref{degIQ} and Theorem~\ref{gencdeg1}, we obtain
\[ \cdeg^{*}_{\C}(\RR)  = \deg(\gr_{\C}(\RR)) - \deg(\RR/\C) = \deg(\RR/(a)) - \deg(\RR/\C) = \cdeg(\RR).\]
\end{proof}

\begin{Example}{\rm
Define $\varphi: S=k[t_1, \cdots, t_5] \rar A=k[x,y]$ as $\varphi(t_i)=x^{5-i}y^{i-1}$ for each $i$. Let $L=\ker(\varphi)$ and $\RR=\mbox{Im}(\varphi)$. Then a canonical ideal $\C$ of $\RR$ is
${\ds \C = (x^{3}y^{5},\; x^{4}y^{4},\; x^{5}y^{3} ) }$
which has a minimal reduction ${\ds Q = (x^{5}y^{3}+ x^{3}y^{5},\; x^{4}y^{4} )}$.
Using Macaulay 2, we obtain
\[ \deg(\gr_{Q}(\RR)) =  6 = \deg(\RR/\C)\]
Hence  we have
\[ \cdeg^{*}_{\C}(\RR) = \deg(\gr_{Q}(\RR)) - \deg(\RR/\C) = 0.\]
Note that $\RR$ is a non-Gorenstein ring of type $3$.
}\end{Example}


Another family of relative $\cdeg$s  arises from the following construction.

\begin{Remark}{\rm
Let $(\RR, \m)$ be a Cohen-Macaulay local ring of dimension $d \geq 2$ that admits a canonical ideal $\C$.
Let $\mathbf{x} = \{x_1, \ldots, x_{d-1}\}\subset \m \setminus \m^2$
 be a minimal system of parameters for $\RR/\C$. Consider
$\cdeg(\RR/(\mathbf{x}))$.
 Now define
\[ \cdeg_{\C}^{\sharp}(\RR) = \min\{\cdeg(\RR/(\mathbf{x})) \mid \ (\mathbf{x}) \}.\]
In this case, we get $\cdeg_{\C}^{\sharp}(\RR) = 0$ if and only if $\RR$ is Gorenstein.
}\end{Remark}


\end{document}